\title{Twisted divided powers and applications}
\author{Michel Gros, Bernard Le Stum \& Adolfo Quir\'os\thanks{Supported by grant MTM2015-68524-P (MINECO/FEDER, UE).}}
\date{Version of \today}
\newtheorem{thm}{Theorem}[section]
\newtheorem{prop}[thm] {Proposition}
\newtheorem{cor}[thm] {Corollary}
\newtheorem{lem}[thm] {Lemma}
\newtheorem{dfn}[thm] {Definition}
\newenvironment{xmp}[1][Example]{\begin{trivlist} \item[\hskip \labelsep {\bfseries #1}]}{\end{trivlist}}
\newenvironment{rmk}[1][Remark]{\begin{trivlist} \item[\hskip \labelsep {\bfseries #1}]}{\end{trivlist}}
\newenvironment{rmks}[1][Remarks]{\begin{trivlist} \item[\hskip \labelsep {\bfseries #1}]}{\end{trivlist}}
\begin{document}

\maketitle

\begin{abstract}
In order to give a formal treatment of differential equations in positive characteristic $p$, it is necessary to use divided powers.
One runs into an analog problem in the theory of $q$-difference equations when $q$ is a $p$th root of unity.
We introduce here a notion of twisted divided powers (relative to $q$) and show that one can recover the twisted Weyl algebra and obtain a twisted $p$-curvature map that describes the center of the twisted Weyl algebra.
We also build a divided $p$-Frobenius that will give, by duality, a formal Azumaya splitting of the twisted Weyl algebra as well as a twisted Simpson correspondence.
\end{abstract}

\tableofcontents

%%%%%%%%%%%%%%%%%%
%%%%%%%%%%%%%%%%%%
\section*{Introduction}
\addcontentsline{toc}{section}{Introduction}

%%%%%%%%%%%%%%%%%%%%%%
\subsection*{Motivation}
The classical Simpson correspondence establishes an equivalence between certain local systems and certain Higgs bundles (see \cite{Simpson92}).
It is purely topological in nature.
There exists also a Simpson correspondence in positive characteristic (see \cite{OgusVologodsky07}) that we recall now (local form):

\begin{thm}[Ogus-Vologodsky] \label{OV}
Let $S$ be scheme of positive characteristic $p$ and $X$ a smooth scheme over $S$.
Then, if the relative Frobenius $F : X \to X'$ lifts modulo $p^2$, it induces an equivalence between modules with a quasi-nilpotent integrable connection on $X$ and quasi-nilpotent Higgs bundles on $X'$.
\end{thm}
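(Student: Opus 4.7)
The plan is to realize the correspondence via Morita theory for an Azumaya algebra. First I would introduce the $p$-curvature $\psi_\nabla : T_{X/S} \to \mathrm{End}_{\mathcal{O}_X}(E)$ defined by $\psi_\nabla(D) = \nabla_D^p - \nabla_{D^{[p]}}$, and check that it is $F$-linear in $D$, so that it provides a canonical $\mathcal{O}_X$-linear map $\psi : E \to F^*\Omega^1_{X'} \otimes_{\mathcal{O}_X} E$. Viewing $E$ as a quasi-coherent sheaf on $X'$ via $F$, this is exactly a Higgs field, and one verifies that quasi-nilpotence of $\nabla$ is equivalent to quasi-nilpotence of $\psi$. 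This already produces a functor from one side to the other, but it is not in itself an equivalence.

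The key structural input is that the sheaf $\mathcal{D}_{X/S}$ of (PD) differential operators is an Azumaya algebra over its center, and that the $p$-curvature identifies this center with $F_* S^\bullet T_{X'}$. Modules with integrable connection on $X$ correspond to $\mathcal{D}_{X/S}$-modules, while Higgs bundles on $X'$ correspond to modules over $S^\bullet T_{X'}$, so any splitting of the Azumaya algebra, after completing along the zero section of $T_{X'}$ (which is exactly what the quasi-nilpotence assumption allows us to do on both sides), will yield the desired equivalence by Morita theory.

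The central and hardest step is therefore constructing such a splitting from the lift $\tilde F$ of $F$ modulo $p^2$. Locally, $\tilde F^*$ on differentials is divisible by $p$, and dividing yields a canonical $\mathcal{O}_{X'}$-linear map $\zeta : F^*\Omega^1_{X'} \to \Omega^1_X$; I would use $\zeta$ to write down an explicit splitting module of the completed Azumaya algebra, gluing local constructions thanks to the fact that two choices of lift differ by a class that acts trivially after completion. The main obstacle is precisely to verify that this globalizes, that the resulting functor is quasi-inverse to $p$-curvature on quasi-nilpotent objects, and that the matching of the two quasi-nilpotence conditions is sharp; once the splitting is in hand, the stated equivalence of categories follows formally.
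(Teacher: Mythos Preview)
Your proposal is correct and follows essentially the same Azumaya--Morita strategy that the paper recalls immediately after stating this theorem: identify the center of $\mathcal D^{(0)}_X$ via the $p$-curvature map, use the lifting $\tilde F$ to produce a splitting (your map $\zeta$ is exactly the paper's divided Frobenius $[F^*]=\frac1p\tilde F^*$), and conclude by Morita equivalence after completing along the zero section.

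The only notable difference is one of emphasis rather than substance. You work directly on the differential-operator side, describing the splitting module for $\widehat{\mathcal D}^{(0)}_X$ over its center. The paper instead stresses the \emph{dual} picture: the splitting isomorphism $\widehat{\mathcal D}^{(0)}_X \simeq \mathcal End_{\widehat{\mathcal Z}^{(0)}_X}(\widehat{\mathcal Z\mathcal O_X}^{(0)})$ is obtained by dualizing an explicit isomorphism of rings of principal parts, $\mathcal O_{X\times_{X'}X}\otimes_{\mathcal O_{X'}}\Gamma_\bullet\Omega^1_{X'}\simeq \mathcal P_X^{(0)}$, built from the divided Frobenius on divided-power polynomial rings. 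This dual viewpoint is what the authors then transport to the twisted setting, so it buys them a template for the $q$-analogue; your formulation is closer to the original Ogus--Vologodsky presentation. Note also that the paper does not itself prove this theorem---it is cited and the strategy is merely sketched as motivation---so there is no detailed argument in the paper to compare line by line with yours.
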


In \cite{GrosLeStumQuiros10}, we generalized this theorem to higher level with a strategy of proof that was different from the original one.
We want to recall it here.
Let us denote by $\mathcal D^{(0)}_{X}$ the ring of differential operators of level zero (Berthelot's sheaf of differential operators) of $X/S$ and by $\mathcal T_{X'}$ the tangent sheaf on $X'/S$.
Then, an $\mathcal O_X$-module with a quasi-nilpotent integrable connection is the same thing as a $\widehat{\mathcal D}^{(0)}_{X}$-module, and a quasi-nilpotent Higgs bundle on $X'$ is the same thing a $\widehat{S^\bullet\mathcal T}_{X'}$-module (where $S^\bullet$ denotes the symmetric algebra and completion is always meant with respect to the augmentation ideal).
Moreover, there exists an injective \emph{$p$-curvature map} $S^\bullet\mathcal T_{X'} \hookrightarrow \mathcal D^{(0)}_{X}$ whose image is exactly the center $\mathcal Z^{(0)}_X$ of $\mathcal D^{(0)}_{X}$ ; and the image of the \emph{linearized $p$-curvature map} $\mathcal O_{X} \otimes_{\mathcal O_{X'}}S^\bullet\mathcal T_{X'} \hookrightarrow \mathcal D^{(0)}_{X}$ is the centralizer $\mathcal Z\mathcal O_{X}^{(0)}$ of $\mathcal O_X$.
Using a lifting of Frobenius, one can build an isomorphism
\[
\widehat{\mathcal D}^{(0)}_{X} \simeq \mathcal End_{\widehat{\mathcal Z}^{(0)}_{X}}(\widehat{\mathcal Z\mathcal O_{X}}^{(0)})
\]
from which Simpson correspondence may be deduced through Morita equivalence.
Actually, if $\mathcal P^{(0)}_{X}$ denotes the ring of principal parts of level zero of $X/S$ and $\Omega^1_{X'}$ is the sheaf of differential forms on $X'/S$, then this isomorphism comes by duality from an isomorphism
\begin{equation} \label{upone}
\mathcal O_{X \times_{X'} X} \otimes_{\mathcal O_{X'}} \Gamma_\bullet \Omega^1_{X'} \simeq \mathcal P_X^{(0)}
\end{equation}
(where $\Gamma_\bullet$ denotes the divided power algebra).

The key of the construction consists in using a lifting $\tilde F$ of $F$ modulo $p^2$ in order to define the \emph{divided Frobenius map},
\[
[F^*]  := \frac 1{p}\widetilde F : \Omega^1_{X'} \to \mathcal P_X^{(0)},
\]
that can be extended in order to obtain the isomorphism \eqref{upone}.
Let us also recall how the $p$-curvature map may be obtained by duality.
If $\mathcal I_{X}$ (resp. $\mathcal I_{X'}$) denotes the ideal of the diagonal of $X/S$ (resp. $X'/S$), then one can use the divided power map
\[
\varphi \mapsto \varphi^{[p]}, \quad \mathcal I_{X'} \to \mathcal P_X^{(0)}
\]
in order to define a morphism $\Omega^1_{X'} = \mathcal I_{X'}/\mathcal I_{X'}^2 \to \mathcal P_X^{(0)}/\mathcal I_X\mathcal P_X^{(0)}$.
In fact, we obtain an isomorphism
\begin{equation}\label{downone}
\mathcal O_{X} \otimes_{\mathcal O_{X'}}\Gamma_\bullet\Omega^1_{X'} \simeq \mathcal P_X^{(0)}/\mathcal I_X\mathcal P_X^{(0)}
\end{equation}
and the linearized $p$-curvature is dual to the following composition
\[
\mathcal P_X^{(0)} \twoheadrightarrow \mathcal P_X^{(0)}/\mathcal I_X\mathcal P_X^{(0)} \simeq \mathcal O_{X} \otimes_{\mathcal O_{X'}}\Gamma_\bullet\Omega^1_{X'}.
\]

Let us give an explicit description of these constructions.
Locally, we may assume that $S = \mathrm{Spec}(R)$ and $X = \mathrm{Spec}(A)$ are affine and that we are given a system of \'etale coordinates on $X$.
Actually, we will concentrate on the one dimensional case and call $x$ the coordinate.
The pull back $A'$ of $A$ along the Frobenius of $R$ comes with an \'etale coordinate $x'$.
We denote by $F^* : A' \to A$ the relative Frobenius of $A$ so that $F^*(x')= x^p$.
We let $\xi = 1 \otimes x - x \otimes 1 \in A \otimes_{R} A$ and denote by $\omega \in \Omega^1_{A'}$ the class of $\xi' = 1 \otimes x' - x' \otimes 1 \in A' \otimes_{R} A'$.
If we write $A\langle \xi \rangle$ and $A\langle \omega \rangle$ for the divided power polynomial rings, then the isomorphism \eqref{downone} is the $A$-linear map
\begin{equation} \label{dupcu}
A\langle \omega \rangle \simeq A\langle \xi \rangle/\xi, \quad \omega^{[k]} \mapsto \overline{\xi^{[pk]}}.
\end{equation}
We can also describe the divided Frobenius map when we are given a lifting $\widetilde F^*$ of $F^*$ modulo $p^2$.
To make it simpler, we assume that $\widetilde F^*(\widetilde x') = \widetilde x^p$.
Then, from $\widetilde F^*(\widetilde \xi') = 1 \otimes \widetilde x^p - \widetilde x^p \otimes 1$ one easily derive
\begin{equation} \label{formF}
[F^*](\omega) = \sum_{i=1}^{p} (p-1) \cdots (p-i+1) x^{p-i}\xi^{[i]}
\end{equation}
and the isomorphism \eqref{upone} is given by
\[
(A[\xi]/\xi^p) \langle \omega \rangle \simeq A\langle \xi \rangle, \quad \omega^{[k]} \mapsto ([F^*](\omega))^{[k]}.
\]

We will mimic this strategy in the twisted case and prove in the end the following theorem (the vocabulary will be specified later on):
 
\begin{thm} \label{GLSQ}
Let $R$ be a commutative ring and $q \in R$ such that $R$ is $q$-divisible of $q$-characteristic $p > 0$.
Let $(A, \sigma)$ be a twisted $R$-algebra with twisted coordinate $x$ such that $\sigma(x) = qx$.
If $F^*$ is a $p$-Frobenius on $A$ which is \emph{adapted} to $\sigma$, then it induces an equivalence between $A$-modules endowed with a quasi-nilpotent $\sigma$-derivation and $A'$-modules endowed with a quasi-nilpotent Higgs field.
\end{thm}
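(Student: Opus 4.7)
The plan is to follow, in the twisted setting, the strategy sketched in the introduction for the classical theorem of Ogus--Vologodsky: first describe the centre of the twisted Weyl algebra via a twisted $p$-curvature, then use the adapted lift of the $p$-Frobenius to build a ``divided $p$-Frobenius'' whose extension splits the twisted Weyl algebra as an Azumaya-type endomorphism algebra over its centre, and finally deduce the equivalence of module categories from Morita theory. The hypothesis that $R$ is $q$-divisible of $q$-characteristic $p$ plays, in the twisted world, the role that characteristic $p$ plays in the classical one: it forces the $q$-analog $[p]_q$ to vanish, so that the $p$-th twisted power of the canonical $\sigma$-derivation is central, while leaving the lower $q$-factorials invertible enough to carry a twisted divided power structure.

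The first technical step is to set up the twisted analogs of the objects used in the introduction: a twisted ring of principal parts $\mathcal P_{(A,\sigma)}$, a twisted divided power algebra on the symbol $\xi = 1 \otimes x - x \otimes 1$, and its analog on $\Omega^1_{A'}$. The key output is a twisted version of \eqref{dupcu}, an $A$-linear isomorphism
\[
A\langle\omega\rangle \simeq A\langle\xi\rangle/\xi, \qquad \omega^{[k]} \mapsto \overline{\xi^{[pk]}},
\]
now built from twisted divided powers, which dualises to a linearised twisted $p$-curvature whose image is the centraliser of $A$ inside the twisted Weyl algebra. Showing that $\xi^{[p]}$ is central and descends to $A'$, so that $\omega^{[k]}$ on the left-hand side really lifts to something indexed by $A'$, is where $[p]_q = 0$ is used essentially.

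Next, given the adapted $p$-Frobenius $F^*$ and a compatible lift $\widetilde F^*$, the divided $p$-Frobenius is defined by dividing $\widetilde F^*(\widetilde\xi')$ by $[p]_q$; \emph{adapted} is precisely the condition that guarantees this quotient makes sense. Extending along twisted divided powers should then yield the twisted analog of \eqref{upone},
\[
(A[\xi]/\xi^p)\langle\omega\rangle \simeq A\langle\xi\rangle,
\]
and dualising gives a formal isomorphism between the completed twisted Weyl algebra and an endomorphism algebra over its centre; Theorem \ref{GLSQ} then follows by Morita equivalence. The main obstacle is verifying that the explicit formula analogous to \eqref{formF}, with ordinary falling factorials replaced by their $q$-analogs, really defines a morphism of twisted divided power algebras. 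This reduces to delicate identities for $q$-binomial coefficients at a primitive $p$-th root of unity (of Lucas type), and the adapted hypothesis on $F^*$ is engineered precisely to make those identities hold on the nose.
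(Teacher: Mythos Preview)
Your overall architecture is the right one and matches the paper: twisted $p$-curvature identifies the centre (and the centraliser of $A$) of the twisted Weyl algebra, a divided $p$-Frobenius gives the isomorphism of twisted divided power algebras, dualising yields an Azumaya-type splitting, and Morita equivalence finishes the job. The description of the twisted divided $p$-power map $\omega^{[k]}\mapsto \overline{\xi^{[pk]}}$ is also correct.

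The genuine gap is in how you propose to construct the divided $p$-Frobenius. You write that one takes a lift $\widetilde F^*$ and defines $[F^*]$ by ``dividing $\widetilde F^*(\widetilde\xi')$ by $[p]_q$'', and that ``\emph{adapted} is precisely the condition that guarantees this quotient makes sense''. But in $R$ one has $(p)_q=0$ by hypothesis, so there is no way to divide by it, and no lifting modulo $(p)_q^2$ is ever introduced or needed in the twisted setting. The paper's construction is quite different: one first works generically over $\mathbb Z[t]$ (where all $t$-integers are nonzero), computes the coefficients $A_{n,i}$ of $F^*(\xi^{(n)_{q',y'}})$ in terms of $\xi^{(i)_{q,y}}$, and then proves the purely polynomial identity $(i)_t!\,A_{n,i}=(n)_{t^p}!\,(p)_t^{\,n} B_{n,i}$ for certain $B_{n,i}\in\mathbb Z[t]$. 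These $B_{n,i}$, specialised at $t=q$, are the coefficients of $[F^*](\omega^{[n]})$; the division by $(p)_q^n$ happens in $\mathbb Z[t]$, not in $R$. That $[F^*]$ is a ring map is then again a generic statement, checked where all $q$-integers are units. Finally, ``adapted'' in the paper does not refer to any divisibility condition: it means $F^*$ is finite flat of rank $p$ and $F^*(A')\subset A^{\partial_\sigma=0}$, and it is used only at the end, to turn the map $(A[\xi]/\xi^{(p)})\langle\omega\rangle\to A\langle\xi\rangle$ into an isomorphism and hence to obtain the Azumaya splitting after completion. Your sketch needs to replace the ``divide by $(p)_q$ using a lift'' step by this generic-coefficient argument; as written, that step cannot be carried out.
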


Let us make some comments. The condition that $R$ is $q$-divisible of $q$-characteristic $p > 0$ is satisfied for example in the following situations:
\begin{enumerate}
\item $q = 1$ and $\mathrm{Char}(R) = p$ with $p$ prime: this is Ogus-Vologodsky's theorem,
\item $q \neq 1$ and $q$ is a $p$th root of unity with $p$ prime,
\item $q \in K \subset R$, with $K$ a field, is a primitive $p$th root of unity but $p$ needs not be prime.
\end{enumerate}
Then if we are given an $R$-algebra $A$, the existence (and uniqueness) of $\sigma$ and $F^*$ satisfying the above properties, are guaranteed in the following situations:
\begin{enumerate}
\item $A = R[x]$ or $A = R[x, x^{-1}]$ and $q \in R^\times$,
\item $R$ is $p^N$-torsion with $p$ prime (and the $p$th power map of $R/p$ lifts to $R$) and $x$ is an \'etale coordinate on $A$.
\end{enumerate}
In particular, we see that when $R$ is $p^N$-torsion with $p$ prime, theorem \ref{GLSQ} is a $q$-deformation of theorem \ref{OV} is the sense of \cite{Scholze16*}.

%%%%%%%%%%%%%%%%%%%
\subsection*{Description}

In the first section, we study the behavior under multiplication of twisted powers in a polynomial ring.
Roughly speaking, these twisted powers are the products that naturally appear when one writes down a formal solution for a $q$-differential equation.
They depend on the constant $q$ but also on the variable $x$.
Actually, for more flexibility, we use another parameter $y$ (which is $y := (1-q)x$ in practice).
The point is to check that there is enough divisibility in the sense of $q$-integers so that we can define the twisted divided power polynomial ring in section two.
We need these divided powers because we are mainly interested in the case when $q$ is a primitive root of unity where (twisted) powers are not sufficient.

Beware that there is no such thing as a general theory of twisted divided powers and we are only able to do the twisted divided power polynomials.
Nevertheless, we can define the twisted divided $p$-power map by using different parameters $q$ and $y$ on both sides, and give an explicit description of the image.
We will also show that, as in the classical case, there exists a duality between polynomials and twisted divided power polynomials.
In the third section, we apply the previous constructions to the case where there exists an endomorphism $\sigma$ that multiplies $y$ by $q$.
In this situation, there exists a general theory of twisted powers and it is compatible with the previous one.
We show that $\sigma$ extends to twisted divided power polynomials and that it behaves nicely with respect to twisted divided $p$-power map as well as duality.

In the fourth section, we introduce the twisted principal parts of level zero.
This is the ring where the formal solutions of a $q$-differential equation live, even when $q$ is a root of unity.
At this point, we really need a coordinate $x$ and set $y = x - \sigma(x)$.
Note that there exists a theory of twisted principal parts of infinite level that is sufficient when $q$ is not a root of unity.
However, we need twisted divided powers in order to obtain the correct object in general, exactly as what happens in positive characteristic for usual differential equations.
One can define formally the Taylor map and check that it is given by the expected formula.
Using this Taylor map, one can dualize the construction and define in section five the notion of twisted differential operator of level zero.
We show that, as expected, the ring of twisted differential operators of level zero is isomorphic to the twisted Weyl algebra.
In section six, we concentrate on the primitive $p$th root of unity situation.
One can then define the twisted $p$-curvature map as the dual of the twisted divided $p$-power map introduced earlier.
We show that its image is exactly the center of the twisted Weyl algebra.

Section seven is quite technical.
We want to define the notion of divided $p$-Frobenius on the twisted divided power polynomial rings (again, we need different flavors of the divided powers on the source and the target).
Actually, we were unable to give an explicit formula and will rely on a generic argument in order to show the existence of the map.
In the last section, we concentrate again on the root of unity situation and we dualize the twisted divided $p$-Frobenius map in order to obtain a formal Azumaya splitting of the twisted Weyl algebra.
It is then completely standard to derive by Morita equivalence a Simpson correspondence for twisted differential modules.

%%%%%%%%%%%%%%%%%%%%
\subsection*{Notations}

Throughout the article, $R$ will denote a commutative ring (with unit) and $q$ will be a fixed element of $R$.
We need to recall here some vocabulary and notation from \cite{LeStumQuiros15}.
First of all, the \emph{$q$-analog} of a natural integer $m$ is:
\[
(m)_{q} := 1 + q + \cdots + q^{m-1}.
\]
And when $q \in R^\times$, the \emph{$q$-analog} of $-m$ is:
\[
(-m)_{q} := - \frac 1 q + \cdots +\frac 1 {q^{m}}.
\]
We will also call $(m)_{q}$ (or $(-m)_{q}$ when $q \in R^\times$) a \emph{$q$-integer} of $R$.

We might use the attribute ``twisted'' in place of the prefix $q$ and say twisted analog or twisted integer for example instead of $q$-analog or $q$-integer.
The same remark applies to all the forthcoming definitions.

The $q$-\emph{characteristic} of $R$ is the smallest positive integer $p$ such that $(p)_{q} = 0$ if it exists, and zero otherwise.
We will then write $q\mathrm{-char} (R) := p$.
If $q \neq 1$ and $p > 0$, then it means that $q$ is a primitive $p$th root of unity.
When $q = 1$, then $p$ is nothing than the usual characteristic of $R$.

The ring $R$ is said to be \emph{$q$-flat} (resp. \emph{$q$-divisible}) if $(m)_{q}$ is always regular (resp. invertible) in $R$ unless $(m)_{q} = 0$.
For example, when the $q$-characteristic $p$ is a prime number, then $R$ is automatically $q$-divisible, and therefore also $q$-flat.
And of course, when $R$ is a domain (resp. a field), then $R$ is automatically $q$-flat (resp. $q$-divisible).
More generally, it is sufficient to assume that $q$ belongs to a subdomain (resp. subfield) of $R$.

We also define the \emph{$q$-factorial} of $m \in \mathbb N$ as
\[
(m)_{q}! := (1)_{q} (2)_{q} \cdots (m)_{q}
\]
and, by induction, the \emph{$q$-binomial coefficients}
\[
{n \choose k}_{q} := {n-1 \choose k-1}_{q} + q^k{n-1 \choose k}_{q}
\]
when $n,k \in \mathbb N$.
Note that we recover the twisted analog as a special occurrence of a twisted binomial coefficient since
\[
(m)_{q} = {m \choose 1}_{q}
\]
if $m \in \mathbb N$.

%%%%%%%%%%%%%%%%%%%%%%%%%%%%%%%%%%%%%%%%%%%%
%%%%%%%%%%%%%%%%%%%%%%%%%%%%%%%%%%%%%%%%%%%%%%
\section{Twisted powers}

Recall that $R$ denotes a commutative ring and $q \in R$.
We assume in this section that $A$ is a commutative $R$-algebra (with unit) and we also fix some $y \in A$.

We denote by $A[\xi]$ the polynomial ring over $A$ and by $A[\xi]_{\leq n}$ the $A$-module of polynomials of degree at most $n$.
We set for all $n \in \mathbb N$,
\begin{equation}\label{naivpow}
\quad \xi^{(n)} := \prod_{i=0}^{n-1}\left(\xi + (i)_{q}y\right) \in A[\xi]_{\leq n}.
\end{equation}
If we want to make clear that these elements depend on $q$ and $y$, we might write $\xi^{(n)_{q,y}}$ but we will try to avoid as much as possible this clumsy notation.
As we will see later, notation \eqref{naivpow} is related to the twisted powers of \cite{LeStumQuiros15} but we do not need to know this at the moment.

Note that, by definition, we have
\[
\xi^{(0)} = 1, \quad \xi^{(1)} = \xi, \quad \ldots \quad \xi^{(n)} = \xi(\xi + y) \cdots (\xi + (n-1)_{q}y), \quad \ldots
\]
We will also use the induction formula
\begin{equation} \label{indform}
\xi^{(n+1)} = \xi^{(n)} (\xi + (n)_{q}y).
\end{equation}

%%%%%%%%%%%%%%%%%%%%
\begin{lem} \label{bas}
The $\xi^{(n)}$'s for $n \in \mathbb N$ form a basis of the $A$-module $A[\xi]$.
More precisely, the $\xi^{(m)}$'s for $m \leq n$ form a basis of $A[\xi]_{\leq n}$.
 \end{lem}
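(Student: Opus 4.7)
The plan is to proceed by induction on $n$, leveraging the inductive formula \eqref{indform} to show that $\xi^{(n)}$ is a monic polynomial of degree exactly $n$ in $\xi$. Indeed, $\xi^{(0)} = 1$ has degree $0$, and if $\xi^{(n)}$ is monic of degree $n$, then multiplying by $\xi + (n)_q y$ (which is monic of degree $1$ in $\xi$) produces a monic polynomial of degree $n+1$; hence each $\xi^{(n)} \in A[\xi]_{\leq n}$ and its leading coefficient in $\xi$ equals $1$.

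From this, the change-of-basis matrix expressing $(\xi^{(0)}, \xi^{(1)}, \ldots, \xi^{(n)})$ in terms of the standard basis $(1, \xi, \xi^2, \ldots, \xi^n)$ of $A[\xi]_{\leq n}$ is upper triangular with $1$'s on the diagonal. Such a matrix is invertible over $A$, so $(\xi^{(0)}, \ldots, \xi^{(n)})$ is indeed a basis of $A[\xi]_{\leq n}$. Taking the union over all $n$ then yields that the $\xi^{(n)}$, $n \in \mathbb{N}$, form a basis of $A[\xi]$.

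There is no real obstacle here: the only subtle point is that the argument does \emph{not} require $y$, $q$, or any $q$-integer to be invertible or regular, because all the divisions one might worry about are avoided---we only ever multiply, and monicity is preserved by multiplication by a monic linear factor. This makes the lemma valid over an arbitrary commutative $R$-algebra $A$ with arbitrary $y \in A$ and $q \in R$, which is exactly the generality claimed.
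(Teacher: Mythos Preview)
Your proof is correct and follows essentially the same approach as the paper's own proof, which simply notes that each $\xi^{(n)}$ is monic of degree $n$. You have spelled out the induction and the triangular change-of-basis argument in more detail, but the underlying idea is identical.
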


In other words, the map $\xi^n \mapsto \xi^{(n)}$ defines an automorphism of $A[\xi]$ as filtered $A$-module (by the degree).

\begin{proof}
This follows from the fact that each $\xi^{(n)}$ is monic of degree $n$.
\end{proof}

%%%%%%%%%
\begin{lem}\label{funmul}
In $A[\xi]$, we have for all $m, n \in \mathbb N$,
\[
\xi^{(m)} \xi^{(n)} = \sum_{i=0}^{\min{(m,n)}} (-1)^i (i)_{q}! q^{\frac{i(i-1)}2}{m \choose i}_{q}{n \choose i}_{q} y^{i}\xi^{(m+n-i)}.
\]
\end{lem}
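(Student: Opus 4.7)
The plan is to prove the multiplication formula by induction on $n$, keeping $m$ fixed, using the induction formula \eqref{indform}. The base case $n=0$ is trivial because $\xi^{(0)}=1$ and only the $i=0$ term survives on the right-hand side, giving $\xi^{(m)}$.

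For the induction step, I would start from
\[
\xi^{(m)}\xi^{(n+1)} = \xi^{(m)}\xi^{(n)}\bigl(\xi + (n)_{q}y\bigr)
\]
and apply the induction hypothesis. The key observation to move $(\xi+(n)_q y)$ past a factor of the form $\xi^{(m+n-i)}$ is the identity
\[
\xi^{(m+n-i)}\bigl(\xi + (n)_{q}y\bigr) = \xi^{(m+n-i+1)} - q^{n}(m-i)_{q}\,y\,\xi^{(m+n-i)},
\]
which follows from \eqref{indform} applied to $\xi^{(m+n-i+1)} = \xi^{(m+n-i)}(\xi + (m+n-i)_{q}y)$ together with the elementary computation $(n)_{q} - (m+n-i)_{q} = -q^{n}(m-i)_{q}$. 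This splits $\xi^{(m)}\xi^{(n+1)}$ as a sum of two sums: one indexed by $i$ producing terms in $\xi^{(m+n+1-i)}$, and one producing terms in $\xi^{(m+n-i)}$ with an extra factor of $y$.

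Then I would re-index the second sum by $j = i+1$ so that both sums are indexed by the same variable, and collect coefficients of $(-1)^{j}(j)_{q}!\,y^{j}\,\xi^{(m+n+1-j)}$. A short computation using $(j-1)_{q}!\,(m-j+1)_{q}\binom{m}{j-1}_{q} = (j)_{q}!\binom{m}{j}_{q}$ shows that the combined coefficient equals
\[
\binom{m}{j}_{q}\Bigl(q^{\binom{j}{2}}\binom{n}{j}_{q} + q^{n+\binom{j-1}{2}}\binom{n}{j-1}_{q}\Bigr).
\]
Factoring out $q^{\binom{j}{2}}$ and using the Pascal-type identity
\[
\binom{n+1}{j}_{q} = \binom{n}{j}_{q} + q^{n+1-j}\binom{n}{j-1}_{q}
\]
(together with $\binom{j}{2} = \binom{j-1}{2} + (j-1)$) yields exactly $q^{\binom{j}{2}}\binom{m}{j}_{q}\binom{n+1}{j}_{q}$, closing the induction.

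The main obstacle is pure bookkeeping of the $q$-powers: making sure that the exponents $\binom{j}{2}$, $\binom{j-1}{2}$ and $n+1-j$ combine correctly and that the right form of the Pascal recursion is used. The paper's stated recursion $\binom{n}{k}_{q} = \binom{n-1}{k-1}_{q} + q^{k}\binom{n-1}{k}_{q}$ is the ``wrong'' form for our needs, so I would need to first invoke (or briefly derive from symmetry) the companion identity above. Once that is in hand, the computation is mechanical and the ranges of summation cause no trouble because $\binom{n}{j}_{q}$ vanishes for $j>n$.
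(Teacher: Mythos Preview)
Your proposal is correct and follows essentially the same approach as the paper: induction on $n$, the same key rewriting $\xi^{(m+n-i)}(\xi+(n)_q y)=\xi^{(m+n-i+1)}-q^n(m-i)_q y\,\xi^{(m+n-i)}$, the same re-indexing, and the same Pascal-type combination (the paper states the combined identity directly rather than factoring out $(j)_q!\binom{m}{j}_q$ first, and checks the boundary indices explicitly rather than appealing to the vanishing of $\binom{n}{j}_q$ for $j>n$, but this is purely cosmetic). Your remark that the companion Pascal recursion is needed is accurate; the paper uses it implicitly inside its combined identity.
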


\begin{proof}
This is proved by induction on $n$.
The formula is trivially true for $n = 0$ and we will have
\begin{eqnarray}
\xi^{(m)} \xi^{(n+1)} & = & \xi^{(m)} \xi^{(n)} (\xi + (n)_{q}y)
\\
&= & \sum_{i=0}^{\min{(m,n)}} (-1)^i(i)_{q}! q^{\frac{i(i-1)}2}{m \choose i}_{q}{n \choose i}_{q} y^i\xi^{(m+n-i)}(\xi + (n)_{q}y). \label{bigone}
\end{eqnarray}

Now, we know from proposition 1.3 of \cite{LeStumQuiros15} that for all $0 \leq i \leq m+n$, we have
\[
(n)_{q} = (m+n-i)_{q} - q^n(m-i)_{q}.
\]
Therefore, we see that
\begin{eqnarray*}
\xi^{(m+n-i)}(\xi + (n)_{q}y) & = & \xi^{(m+n-i)}\left(\xi + (n+m-i)_{q}y - q^n(m-i)_{q}y\right)
\\
&= & \xi^{(m+n+1-i)} - q^n(m-i)_{q}y\xi^{(m+n-i)}.
\end{eqnarray*}
We can replace in \eqref{bigone} and get
\[
\xi^{(m)} \xi^{(n+1)} = S + T
\]
with
\[
S = \sum_{i=0}^{\min{(m,n)}} (-1)^i (i)_{q}! q^{\frac{i(i-1)}2}{m \choose i}_{q}{n \choose i}_{q} y^i\xi^{(m+n+1-i)} 
\]
and
\[
T =  - \sum_{i=0}^{\min{(m,n)}} (-1)^i (i)_{q}!q^{\frac{i(i-1)}2} {m \choose i}_{q}{n \choose i}_{q} y^i q^n(m-i)_{q}y\xi^{(m+n-i)}.
\]
Changing $i$ to $i-1$, we obtain
\[
T = \sum_{i=1}^{\min{(m,n)}+1} (-1)^{i}q^{n+1-i}(i-1)_{q}! (m+1-i)_{q}q^{\frac{i(i-1)}2} {m \choose i-1}_{q}{n \choose i-1}_{q} y^i\xi^{(m+n+1-i)}.
\]
Now we can compute for $1 \leq i \leq \min{(m,n)}$,
\[
 (i)_{q}! {m \choose i}_{q}{n \choose i}_{q} + q^{n-i+1}(i-1)_{q}! (m-i+1)_{q} {m \choose i-1}_{q}{n \choose i-1}_{q} 
\]
\[
= (i)_{q}! {m \choose i}_{q}{n+1 \choose i}_{q}.
\]
And the assertion will follow once we have checked the the side cases.
For $i = 0$, this should be clear and the case $i = \min(m,n)+1$ has to be split in two.
First, if $m \leq n$, then $i = m+1$ and $(m-i+1)_{q} = 0$: there is no contribution as expected.
Second, if $m > n$ and $i = n+1$, we do have
\[
(n)_{q}! (m-n)_{q} {m \choose n}_{q}{n \choose n}_{q} = (n+1)_{q}! {m \choose n+1}_{q}{n+1 \choose n+1}_{q}.\qedhere
\]\end{proof}
 
 \begin{rmks}
 \begin{enumerate}
 \item In the case $m=1$, we find
 \[
\xi \xi^{(n)} = \xi^{(n+1)} - (n)_{q} y\xi^{(n)}
\]
which we can also directly derive from the induction formula \eqref{indform}.
 \item The coefficients of $y^i\xi^{(m+n+i)}$ are polynomials in $q$ with integer coefficients.
 Actually, in order to prove the lemma, it would be sufficient to consider the case $R = \mathbb Z[t]$ and $q = t$.
 Or even $R = \mathbb Q(t)$.
 However, this does not seem to make anything simpler at this point.
 \item
In the case $q = 1$, we will rather write $\omega$ instead of $\xi$ for the extra variable.
Then, the multiplication formula simplifies a little bit to
\[
\omega^{(m)} \omega^{(n)} = \sum_{i=0}^{\min{(m,n)}} (-1)^i i! {m \choose i}{n \choose i} y^{i}\omega^{(m+n-i)}.
\]
\end{enumerate}
\end{rmks}

%%%%%%%%%%%%%%%%%%%
\begin{lem} \label{binai}
Assume that $q=1$.
Then, under the morphism of $A$-algebras
\begin{equation} \label{delna}
\xymatrix@R0cm{  A[\omega] \ar[r]^-\delta & A[\omega]\otimes_{A} A[\omega]
\\  \omega \ar@{|->}[r] & 1 \otimes \omega + \omega \otimes 1,}
\end{equation}
we have
\[
\delta\left(\omega^{(n)}\right) := \sum_{i=0}^{n} {n \choose i} \omega^{(n-i)} \otimes \omega^{(i)}.
\]
\end{lem}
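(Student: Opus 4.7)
The plan is to argue by induction on $n$, exploiting the induction formula \eqref{indform} specialized to $q=1$, which reads $\omega^{(n+1)} = \omega^{(n)}(\omega + ny)$, together with the fact that $\delta$ is an algebra morphism which is $A$-linear (so $\delta(y) = y$, i.e. $y\otimes 1 = 1 \otimes y$). The base case $n=0$ is immediate since $\delta(1) = 1 \otimes 1 = \binom{0}{0}\omega^{(0)} \otimes \omega^{(0)}$.

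For the inductive step, I would compute
\[
\delta(\omega^{(n+1)}) = \delta(\omega^{(n)}) \cdot \bigl(\omega \otimes 1 + 1 \otimes \omega + ny\bigr),
\]
plug in the inductive formula for $\delta(\omega^{(n)})$, and then rewrite each occurrence of $\omega^{(k)}\omega$ using the specialization of the first remark after Lemma~\ref{funmul} at $q=1$, namely $\omega^{(k)}\omega = \omega^{(k+1)} - ky\,\omega^{(k)}$. This turns the computation into a sum of tensors of the form $\omega^{(a)} \otimes \omega^{(b)}$, plus a collection of correction terms proportional to $y$.

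The key simplification is that all the $y$-terms cancel: their combined coefficient in front of $\omega^{(n-i)} \otimes \omega^{(i)}$ is $\binom{n}{i}\bigl(-(n-i) - i + n\bigr) = 0$. After this cancellation one is left with exactly
\[
\delta(\omega^{(n+1)}) = \sum_{i=0}^{n} \binom{n}{i}\omega^{(n-i+1)}\otimes \omega^{(i)} + \sum_{i=0}^{n} \binom{n}{i}\omega^{(n-i)} \otimes \omega^{(i+1)}.
\]
A reindexation $i \mapsto i-1$ in the second sum combined with Pascal's rule $\binom{n}{i-1} + \binom{n}{i} = \binom{n+1}{i}$ yields the desired formula for $n+1$.

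The only real obstacle is bookkeeping: one must be careful that the $y$-shift coefficients really do cancel (the identity $-(n-i) - i + n = 0$ is the hidden combinatorial content) and that the boundary indices $i=0$ and $i=n+1$ are handled correctly with the conventions $\binom{n}{-1} = \binom{n}{n+1} = 0$. There is no conceptual difficulty and no need to invoke Lemma~\ref{funmul} in its full generality; the above recurrence and the $q=1$ degeneration of the remark already suffice.
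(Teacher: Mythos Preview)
Your proof is correct and follows essentially the same inductive strategy as the paper: induct on $n$, use that $\delta$ is multiplicative together with $\omega^{(n+1)} = \omega^{(n)}(\omega + ny)$, and finish with a reindexation and Pascal's rule. The only organizational difference is that the paper, instead of expanding via $\omega^{(k)}\omega = \omega^{(k+1)} - ky\,\omega^{(k)}$ and then cancelling the $y$-terms, splits the scalar $ny$ termwise as $iy + (n-i)y$ so that each factor becomes $1\otimes(\omega+iy) + (\omega+(n-i)y)\otimes 1$ and the induction formula $\omega^{(k)}(\omega+ky)=\omega^{(k+1)}$ applies directly with no correction terms to track; your version makes the cancellation explicit, which is equally valid.
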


\begin{proof}
The formula is proved to be correct by induction on $n$.
First of all, since $\delta$ is a ring homomorphism, we have
\[
\delta(\omega^{(n+1)}) = \delta\left(\omega^{(n)}(\omega + ny)\right) = \delta(\omega^{(n)})\delta(\omega + ny).
\]
Moreover, we can write for all $i = 0, \ldots, n$,
\[
\delta(\omega + ny) = 1 \otimes \omega + \omega \otimes 1 + ny = 1 \otimes (\omega + iy) + (\omega + (n-i)y) \otimes 1.
\]
Thus, by induction, we will have
\begin{eqnarray*}
\delta(\omega^{(n+1)})  &=& \sum_{i=0}^{n} {n \choose i} (\omega^{(n-i)} \otimes \omega^{(i)})(1 \otimes (\omega + iy) + (\omega + (n-i)y) \otimes 1)
\\
&=& \sum_{i=0}^{n} {n \choose i} \omega^{(n-i)} \otimes \omega^{(i)}(\omega + iy) + \sum_{i=0}^{n} {n \choose i}\omega^{(n-i)}(\omega + (n-i)y)  \otimes \omega^{(i)}
\\
&=& \sum_{i=0}^{n} {n \choose i} \omega^{(n-i)} \otimes \omega^{(i+1)} + \sum_{i=0}^{n} {n \choose i} \omega^{(n-i+1)}  \otimes \omega^{(i)}
\\
&=& \sum_{i=1}^{n+1} {n \choose i-1} \omega^{(n-i+1)} \otimes \omega^{(i)} + \sum_{i=0}^{n} {n \choose i} \omega^{(n-i+1)}  \otimes \omega^{(i)}
\\
&=& \sum_{i=0}^{n+1} \left({n \choose i-1} +  {n \choose i}\right) \omega^{(n+1-i)} \otimes \omega^{(i)}
\\
&=& \sum_{i=0}^{n+1} {n +1 \choose i} \omega^{(n+1-i)} \otimes \omega^{(i)}.\qedhere
\end{eqnarray*}\end{proof}

\section{Twisted divided powers} \label{first}

We let as before $A$ be a commutative $R$-algebra with a distinguished element $y$.

We denote by $A\langle\xi\rangle$ the free $A$-module on the (abstract) generators $\xi^{[n]}$ with $n \in \mathbb N$.
We will set $1 := \xi^{[0]}$ and $\xi := \xi^{[1]}$.
We will also denote by $I^{[n+1]}$ the free $A$-submodule generated by all $\xi^{[k]}$ with $k > n$ and
\[
A\langle\langle\xi\rangle\rangle := \varprojlim A\langle\xi\rangle/I^{[n+1]}.
\]
We will soon turn $A\langle\xi\rangle$ into a commutative $A$-algebra that will depend on $q$ and $y$.
If necessary, we will then write
\[
A\langle\xi\rangle_{q,y}, \quad \xi^{[n]_{q,y}}, \quad I_{q,y}^{[n+1]} \quad \mathrm{and} \quad A\langle\langle\xi\rangle\rangle_{q,y}.
\]

The next result is elementary but fundamental.

%%%%%%%%%%%%%%%%%%%%%%%%%
\begin{prop} \label{bijfond}
There exists a unique morphism of filtered $A$-modules
\begin{equation}\label{dppol}
\xymatrix@R0cm{ A[\xi] \ar[r] & A\langle\xi\rangle
\\ \xi^{(n)} \ar@{|->}[r] & (n)_{q }! \xi^{[n]}.}
\end{equation}
It is an isomorphism if all positive $q$-integers are invertible in $R$.
\end{prop}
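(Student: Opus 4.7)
The plan is to deduce this directly from Lemma~\ref{bas}. By that lemma, $(\xi^{(n)})_{n \in \mathbb N}$ is a basis of the $A$-module $A[\xi]$ and, more precisely, $(\xi^{(m)})_{m \leq n}$ is a basis of the filtered piece $A[\xi]_{\leq n}$. On the other side, $A\langle \xi \rangle$ is by construction the free $A$-module with basis $(\xi^{[n]})_{n \in \mathbb N}$; setting $A\langle \xi\rangle_{\leq n} := \bigoplus_{k \leq n} A\xi^{[k]}$ (a canonical $A$-linear complement of $I^{[n+1]}$) equips it with a natural filtration whose $n$-th step has basis $(\xi^{[m]})_{m \leq n}$.

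Given these two compatible bases, I would define the desired morphism as the unique $A$-linear map sending $\xi^{(n)}$ to $(n)_{q}!\,\xi^{[n]}$. Uniqueness and well-definedness are automatic since the values are prescribed on a basis, and the map respects the filtrations because $\xi^{(n)} \in A[\xi]_{\leq n}$ is sent to $(n)_q!\,\xi^{[n]} \in A\langle \xi\rangle_{\leq n}$.

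For the isomorphism statement, assume that every positive $q$-integer $(m)_{q}$ is invertible in $R$ (and hence in $A$). Then $(n)_{q}! = \prod_{m=1}^{n}(m)_{q}$ is invertible in $A$ for every $n \in \mathbb N$, so $((n)_{q}!\,\xi^{[n]})_{n \in \mathbb N}$ is again a basis of $A\langle \xi\rangle$ compatible with the filtration. The constructed map therefore sends one filtered basis to another and is a filtered $A$-linear isomorphism.

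There is really no main obstacle here: all of the substance is packaged in Lemma~\ref{bas}, and the proposition is essentially a change-of-basis statement. The point of isolating it is only to fix a name and a convenient normalization for the resulting $A$-linear identification, so that the image of $\xi^{(n)}/(n)_{q}!$ can later be treated as a genuine \emph{twisted divided power} of $\xi$, even when $(n)_{q}!$ is not invertible.
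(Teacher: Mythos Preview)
Your proof is correct and follows essentially the same approach as the paper: both rely on Lemma~\ref{bas} to identify $(\xi^{(n)})_{n}$ as a filtered basis of $A[\xi]$ and on the definition of $A\langle\xi\rangle$ as free on $(\xi^{[n]})_{n}$, then define the map on bases. You simply spell out the filtered-basis bookkeeping in more detail than the paper does.
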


The last condition means that $R$ is $q$-divisible of $q$-characteristic zero.

\begin{proof}
This follows from the facts that the $\xi^{(n)}$'s form a basis of $A[\xi]$ thanks to lemma \ref{bas}, and that the $\xi^{[n]}$'s form a basis of $A \langle\xi \rangle$ by definition.
\end{proof}

In the latest case, we will turn the bijection into an identification. In other words, we will write
\[
 \xi^{[n]} = \frac {\xi^{(n)}} {(n)_{q}!} = \frac {\xi(\xi + y) \cdots (\xi + (n-1)_{q}y)} {1 \cdots (n-1)_{q} (n)_{q}}.
\]

%%%%%%%%%%%%%%
\begin{prop} \label{dpring}
The multiplication rule
\begin{equation} \label{sqform}
\forall m,n \in \mathbb N, \quad \xi^{[m]} \xi^{[n]} = \sum_{i=0}^{\min{(m,n)}} (-1)^iq^{\frac{i(i-1)}2}{m + n -i \choose m}_{q}{m \choose i}_{q} y^i\xi^{[m+n-i]}
\end{equation}
defines a structure of commutative $A$-algebra on $A\langle\xi\rangle$ and the linear map \eqref{dppol} is a morphism of $A$-algebras.
Moreover, for all $n \in \mathbb N$, $I^{[n+1]}$ is an ideal in $A\langle\xi\rangle$.
\end{prop}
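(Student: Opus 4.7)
The plan is to reduce the verification to a universal situation where proposition \ref{bijfond} already provides an isomorphism, and transport the multiplication of $A[\xi]$ across it.

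First I would notice that the coefficients appearing in the proposed formula \eqref{sqform} are polynomials in $q$ and $y$ with integer coefficients, so the formula defines an $A$-bilinear operation on $A\langle\xi\rangle$ with no hypothesis on $R$, and the construction is functorial in the triple $(A,q,y)$. The axioms to be checked (unitality, commutativity, associativity, and compatibility of \eqref{dppol} with the products) are polynomial identities in $q$ and $y$ among the structure constants on the basis $\{\xi^{[n]}\}_{n \in \mathbb N}$; consequently it is enough to verify them in the universal case $R_{0} = \mathbb Q(t)$, $A_{0} = R_{0}[y]$, with $q = t$, since an identity of $\mathbb Z$-coefficient polynomials in $q$ and $y$ that holds over this base holds universally and hence in any $A$ after specialization.

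In this universal situation every positive $q$-integer is invertible in $R_{0}$, so proposition \ref{bijfond} turns \eqref{dppol} into an $A_{0}$-linear isomorphism $A_{0}[\xi] \xrightarrow{\sim} A_{0}\langle\xi\rangle$ under which $\xi^{(n)}$ corresponds to $(n)_{q}!\,\xi^{[n]}$. I would transport the commutative associative unital $A_{0}$-algebra structure of $A_{0}[\xi]$ across this isomorphism, and then verify via lemma \ref{funmul} and the elementary $q$-factorial identity
\[
\frac{(i)_{q}!\,(m+n-i)_{q}!}{(m)_{q}!\,(n)_{q}!}{m \choose i}_{q}{n \choose i}_{q} = {m+n-i \choose m}_{q}{m \choose i}_{q}
\]
that the induced product is exactly the one given by \eqref{sqform}. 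Unitality of $\xi^{[0]}=1$ falls out of the $m = 0$ or $n = 0$ cases, while commutativity and associativity are inherited from $A_{0}[\xi]$, and \eqref{dppol} is by construction an isomorphism of $A_{0}$-algebras. Descent back to arbitrary $(R, A, q, y)$ then promotes \eqref{sqform} to an $A$-algebra structure on $A\langle\xi\rangle$ and shows that \eqref{dppol} is a morphism of $A$-algebras in general.

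The ideal assertion is immediate from the formula itself: every term on the right of \eqref{sqform} is an $A$-multiple of some $\xi^{[m+n-i]}$ with $0 \leq i \leq \min(m,n)$, hence $m+n-i \geq \max(m,n)$. Thus $\xi^{[m]} \in I^{[n_{0}+1]}$ forces $\xi^{[m]}\xi^{[n]} \in I^{[n_{0}+1]}$ for every $n$, so $I^{[n_{0}+1]}$ is an ideal. The main obstacle is the $q$-factorial bookkeeping inside the identity above; the universal-specialization reduction is what makes everything else clean, and in particular avoids a direct combinatorial verification of associativity of the proposed formula.
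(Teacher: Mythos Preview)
Your proof is correct and follows essentially the same approach as the paper: reduce to the universal base where all $q$-integers are invertible (the paper passes through $\mathbb Z[t][Y]$ first, then to $\mathbb Q(t)[Y]$, while you go directly to $\mathbb Q(t)[y]$), use proposition \ref{bijfond} to identify $A\langle\xi\rangle$ with $A[\xi]$, and match the transported product to \eqref{sqform} via lemma \ref{funmul} and the same $q$-factorial identity. The ideal argument is likewise the same, noting $m+n-i \geq \max(m,n)$.
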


Note that we have
\[
{m + n -i \choose m}_{q}{m \choose i}_{q} = {m + n -i \choose n}_{q}{n \choose i}_{q} 
\]
so that the formula is actually symmetric in $m$ and $n$.

\begin{proof}
In order to show that these formulas define a ring structure, it is sufficient to consider the case where $R = \mathbb Z[t]$, $A = \mathbb Z[t,Y]$ are polynomial rings with $q=t$ and $y = Y$.
But then, we can even assume that $R = \mathbb Q(t)$ and $A = \mathbb Q(t)[Y]$.
In particular, we are in a situation where all positive $q$-integers are invertible in $A$.
Then the map \eqref{dppol} becomes bijective.
Moreover, using lemma \ref{funmul}, we see that that the multiplication on both sides coincide because
\[
 (m)_{q}!(n)_{q}! {m + n -i \choose m}_{q}{m \choose i}_{q} = (m+n-i)_{q}!   (i)_{q}! {m \choose i}_{q}{n \choose i}_{q}
\]
as one easily checks.

Finally, assume that $n > k$.
Then, for $i \leq \min (m, n)$, we have $i \leq m$ and therefore $m+n-i \geq n > k$.
It follows that $\xi^{[m]}\xi^{[n]} \equiv 0 \mod I^{[k]}$, and $I^{[k]}$ is an ideal.
\end{proof}

%%%%%%%%%%%%%
\begin{xmp}
\begin{enumerate}
\item For all $k \in \mathbb N$, we have
\[
\xi^{[k]} \xi = (k+1)_{q}\xi^{[k+1]} - (k)_{q}y\xi^{[k]}.
\]
\item We have
\[
(\xi^{[2]})^2 = (2)_{q^2}(3)_{q}\xi^{[4]} - (3)_{q}(2)_{q} y\xi^{[3]} + qy^2\xi^{[2]}.
\]
\end{enumerate}
\end{xmp}

%%%%%%%%%
\begin{dfn} \label{dfdp}
The free $A$-module $A\langle\xi\rangle$ on the (abstract) generators $\xi^{[n]}$ with $n \in \mathbb N$, endowed with the multiplication rule of proposition \ref{dpring}, is the \emph{twisted divided power polynomial ring} over $A$.
\end{dfn}

%%%%%%%%%%%%%%
\begin{rmk}
\begin{enumerate}
\item
It is important to remind that $q$ and $y$ are built into this definition.
As already mentioned, if we want to make clear the dependence on the parameters, we will write $A\langle\xi\rangle_{q,y}$.
\item
The coefficients in the multiplication formula \eqref{sqform} are polynomials in $q$.
Actually if we consider the map $\mathbb Z[t][Y] \to A$ that sends $t$ to $q$ and $Y$ to $y$, there exists an isomorphism of $A$-algebras
\[
A \otimes_{\mathbb Z[t][Y]} \mathbb Z[t][Y]\langle \xi \rangle \simeq A \langle \xi \rangle.
\]
\item
The filtration of $A\langle\xi\rangle$ by the ideals $I^{[n+1]}$ will be called the \emph{divided power filtration} or \emph{ideal} filtration.
Note that $A\langle\langle\xi\rangle\rangle$ inherits the structure of a commutative $A$-algebra.
\end{enumerate}
\end{rmk}

%%%%%%%%%%%%%
\begin{xmp}
\begin{enumerate}
\item In the case $q = 1$ and $y = 0$, we fall back onto the usual divided power polynomial ring.
\item When $q \neq 1$ but still $y = 0$, is is possible to develop a general theory of $q$-divided powers, and $A\langle\xi\rangle$ will be the divided power polynomial ring for this theory.
We do not know how to achieve this in general.
\item Assume $R=A = \mathbb F_{2}$, $q=1$ and $y = 1$.
In this situation, we have $\xi^2 = \xi$ in $A\langle\xi\rangle$ but there exists no non trivial idempotent of degree 1 in the usual divided power polynomial ring.
Thus we see that when $q = 1$ but $y \neq 0$, the ring $A\langle\xi\rangle$ is \emph{not} isomorphic to the usual divided power polynomial ring.
\end{enumerate}
\end{xmp}

%%%%%%%%%%%%%%%%
\begin{lem} \label{freeb}
Assume $R$ is $q$-divisible of $q$-characteristic $p > 0$.
Then, the ideal generated by $\xi$ in $A\langle\xi\rangle$ is the free $A$-module generated by all $\xi^{[k]}$ with $p \nmid k$.
\end{lem}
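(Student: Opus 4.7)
The strategy is to prove the two inclusions separately, using as the sole tool the multiplication formula
\[
\xi^{[k]}\xi \;=\; (k+1)_q\,\xi^{[k+1]} \;-\; (k)_q\, y\,\xi^{[k]}
\]
from the Examples following Proposition \ref{dpring}, together with the key consequence of our hypothesis: $(m)_q$ is invertible when $p\nmid m$ and vanishes when $p\mid m$. Write $M$ for the $A$-submodule $\bigoplus_{k\geq 1,\,p\nmid k} A\xi^{[k]}$.

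For the inclusion $(\xi)\subseteq M$, I would note that since $\{\xi^{[n]}\}_{n\geq 0}$ is an $A$-basis of $A\langle\xi\rangle$, the ideal $(\xi)$ is the $A$-span of the elements $\xi\cdot\xi^{[n]}$ for $n\geq 0$. The formula expresses each such element as an $A$-combination of $\xi^{[n+1]}$ and $\xi^{[n]}$. Since $p\geq 2$ cannot divide two consecutive integers, at least one of $n$ and $n+1$ is coprime to $p$; if $p\mid n$ then the coefficient $(n)_q$ vanishes and only $\xi^{[n+1]}$ survives (with $p\nmid n+1$), and symmetrically if $p\mid (n+1)$. In every case $\xi\cdot\xi^{[n]}$ lies in $M$, which proves $(\xi)\subseteq M$.

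For the inclusion $M\subseteq (\xi)$, I would proceed by strong induction on $k\geq 1$ with $p\nmid k$. The base case $k=1$ is the tautology $\xi^{[1]}=\xi\in (\xi)$. For $k\geq 2$ with $p\nmid k$, the invertibility of $(k)_q$ lets me rewrite the multiplication formula as
\[
\xi^{[k]} \;=\; \frac{1}{(k)_q}\bigl(\xi\cdot\xi^{[k-1]} + (k-1)_q\, y\,\xi^{[k-1]}\bigr).
\]
If $p\mid (k-1)$, the second summand vanishes and the right-hand side is manifestly in $(\xi)$. If $p\nmid (k-1)$, the induction hypothesis applied to $k-1$ gives $\xi^{[k-1]}\in(\xi)$, so again the right-hand side lies in $(\xi)$. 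This completes the induction. The freeness part of the conclusion is immediate since the $\xi^{[k]}$'s form an $A$-basis of $A\langle\xi\rangle$ by definition, so any subfamily is automatically $A$-linearly independent.

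The only delicate point is the case analysis in the first inclusion, which hinges on the arithmetic fact that $p\mid n$ and $p\mid n+1$ cannot both hold; everything else is a mechanical use of the multiplication formula and of $q$-divisibility. I do not expect any genuine obstacle.
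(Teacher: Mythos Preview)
Your proposal is correct and follows essentially the same approach as the paper's proof: both directions rest entirely on the single identity $\xi^{[k]}\xi = (k+1)_q\xi^{[k+1]} - (k)_q y\xi^{[k]}$ combined with the fact that $(m)_q$ vanishes when $p\mid m$ and is a unit otherwise. The paper organizes the inductive step for $M\subseteq(\xi)$ block by block (treating $\xi^{[kp+1]},\ldots,\xi^{[kp+p-1]}$ for each $k$), whereas you run a single strong induction with a case split on whether $p\mid(k-1)$, but this is only a cosmetic difference.
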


\begin{proof}
The formulas
\begin{equation} \label{kindu}
\forall k \in \mathbb N, \quad \xi^{[k]} \xi = (k+1)_{q}\xi^{[k+1]} + (k)_{q}y\xi^{[k]}
\end{equation}
show that  the ideal $A\langle\xi\rangle\xi$ is contained in the $A$-module generated by all $(k)_{q}\xi^{[k]}$'s.
Since $(k)_{q} = 0$ when $p \mid k$, we see that $A\langle\xi\rangle\xi$ is actually contained in the free $A$-module generated by all $\xi^{[k]}$'s with $p \nmid k$.
Conversely, formula \eqref{kindu} also tells us that
\[
(k+1)_{q}\xi^{[k+1]} \equiv  (k)_{q}y\xi^{[k]} \mod \xi
\]
for all $k$.
Using the fact that we always have $(kp+i)_{q} = (i)_{q}$, we see that for all $k \in \mathbb N$, we have
\[
\xi^{[kp+1]} = (kp+1)_{q} \xi^{[kp+1]} \equiv  (kp)_{q}y\xi^{[kp]} = 0 \mod \xi
\]
Then, by induction on $i$, we get for $1 < i < p$,
\[
(i)_{q}\xi^{[kp +i]} \equiv  (i-1)_{q}y\xi^{[kp+i-1]} \equiv 0 \mod \xi
\]
and we easily conclude since $(i)_{q} \in R^\times$ for $0 < i < p$ because $R$ is $q$-divisible.
\end{proof}

%%%%%%%%%%%%%%%%
\begin{dfn}
Assume that $q\mathrm{-char}(R) = p > 0$.
Then the unique $A$-linear map
\begin{equation} \label{callu}
\xymatrix@R0cm{A\langle\omega\rangle_{1,y^p} \ar[r] & A\langle\xi\rangle_{q,y} \\
\omega^{[k]} \ar@{|->}[r] & \xi^{[kp]}  }
\end{equation}
is the \emph{twisted divided $p$-power map}.
\end{dfn}

%%%%%%%%%%%%%%%%
\begin{rmk}
We will not need it but it should be noticed that when $p$ is \emph{not} the $q$-characteristic of $R$, the definition has to be modified a little bit:
the \emph{twisted divided power map} will be given by
\[
\xymatrix@R0cm{A\langle\omega\rangle_{q^p,y^p} \ar[r] & A\langle\xi\rangle_{q,y}
\\\omega^{[k]} \ar@{|->}[r] & \prod_{i=2}^k {ip-1 \choose p-1} \xi^{[kp]}.  }
\]
\end{rmk}

%%%%%%%%%%%%%%%%
\begin{thm} \label{dualpc}
Assume that $q\mathrm{-char}(R) = p > 0$.
If $R$ is $q$-flat, then the twisted divided power map is a ring homomorphism.
If $R$ is $q$-divisible, then it induces an isomorphism of $A$-algebras
\begin{equation}\label{dupec}
A\langle\omega\rangle_{1,y^p} \simeq A\langle\xi\rangle_{q,y}/(\xi).
\end{equation}
\end{thm}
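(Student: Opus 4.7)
The plan is to verify that the $A$-linear map $\omega^{[k]} \mapsto \xi^{[kp]}$ is a ring homomorphism by checking multiplicativity on basis elements, and then use Lemma~\ref{freeb} to extract the quotient isomorphism statement. Everywhere below I rely on the basic observation that $(p)_q = 0$ forces $q^p = 1$, since $(q-1)(p)_q = q^p - 1$.

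The core technical input is a twisted Lucas-type identity valid in any $q$-flat $R$ with $(p)_q = 0$: namely $\binom{mp}{j}_q = 0$ whenever $p \nmid j$, and $\binom{Mp}{mp}_q = \binom{M}{m}$ for $M \geq m$. I would prove it in two steps. First, the classical relation $(j)_q \binom{p}{j}_q = (p)_q \binom{p-1}{j-1}_q$ holds already in $\mathbb Z[q]$, hence in $R$; using that $(j)_q$ is regular for $0<j<p$ by $q$-flatness and that $(p)_q = 0$, this yields $\binom{p}{j}_q = 0$ for $0<j<p$. Second, $q$-Vandermonde applied to the splitting $mp = p + (m-1)p$, combined with $q^p = 1$ and the vanishing just obtained, collapses to the clean recurrence $\binom{mp}{k}_q = \binom{(m-1)p}{k}_q + \binom{(m-1)p}{k-p}_q$, from which an induction on $m$ delivers both assertions.

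With the Lucas identity in hand, apply \eqref{sqform} to $\xi^{[mp]} \xi^{[np]}$: all terms with $p \nmid j$ vanish, leaving only $j = ip$ for $0 \leq i \leq \min(m,n)$, and the surviving $q$-binomials become $\binom{m+n-i}{m}\binom{m}{i}$. The remaining check is the sign identity $(-1)^{ip} q^{ip(ip-1)/2} = (-1)^i$, which I dispatch by a small case split: for $p$ odd, $p$ divides $ip(ip-1)/2$ (either $ip/2 = (i/2)p$ when $i$ is even, or $ip - 1$ is even and $ip$ itself is divisible by $p$), so $q^{ip(ip-1)/2} = 1$ and $(-1)^{ip} = (-1)^i$; for $p=2$ the relation $(2)_q = 0$ forces $q = -1$, and a one-line parity computation closes the case. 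Matching with the formula for $\omega^{[m]}\omega^{[n]}$ in $A\langle\omega\rangle_{1,y^p}$ (which is \eqref{sqform} at $q = 1$ with $y$ replaced by $y^p$) finishes the ring homomorphism part.

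For the isomorphism under $q$-divisibility, Lemma~\ref{freeb} identifies $A\langle\xi\rangle_{q,y}/(\xi)$ with the free $A$-module on the classes $\overline{\xi^{[kp]}}$, so the twisted divided $p$-power map induces an $A$-module isomorphism, which by the first part is a ring isomorphism. The main obstacle I anticipate is the twisted Lucas identity: one cannot simply quote the classical proof valid over a field containing a primitive $p$-th root of unity, and the recursive derivation via $q$-Vandermonde is what bridges the gap to the general $q$-flat setting.
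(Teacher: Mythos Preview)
Your approach is essentially the paper's own: compute $\xi^{[mp]}\xi^{[np]}$ via \eqref{sqform}, kill the terms with $p\nmid i$ by the twisted Lucas identity, match the surviving coefficients against the $(q=1,\,y^p)$ formula, and invoke Lemma~\ref{freeb} for the quotient isomorphism. The paper simply cites the twisted Lucas theorem from \cite{LeStumQuiros15}, whereas you supply a clean self-contained derivation via $q$-Vandermonde; that is a genuine plus. Your sign identity $(-1)^{ip}q^{ip(ip-1)/2}=(-1)^i$ is exactly the content of the paper's Lemma~\ref{minone}.

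There is, however, a real gap in your case split for that sign identity: you treat $p$ odd and $p=2$, but even $p>2$ is perfectly possible under the hypotheses (take $R$ a field and $q$ a primitive $p$th root of unity for any $p$; nothing forces $p$ prime). The paper handles the general even case by writing $p=2k$ and observing that $(p)_q=(k)_q(1+q^k)$; since $(k)_q\neq 0$ and $R$ is $q$-flat, $(k)_q$ is regular, so $q^k=-1$. Then $(-1)^{ip}=1$ while $q^{ip(ip-1)/2}=q^{ki(2ki-1)}=(q^k)^{i(2ki-1)}=(-1)^{i(2ki-1)}=(-1)^i$, since $2ki-1$ is odd. Patching this in completes your argument.
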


Recall that the first condition means that $q$ is a primitive $p$th root of unity or that $q=1$ and $R$ has positive characteristic $p$.
Moreover, $q$-divisibility is satisfied if $p$ is prime or if $q$ belongs to a subfield $K$ of $R$ for example.

\begin{proof}
By definition, if we denote by $u$ the twisted divided power map \eqref{callu}, we have
\[
u(\omega^{[k]}) = \xi^{[kp]}.
\]
Therefore, it follows from lemma \ref{freeb} that the map \eqref{dupec} is an isomorphism of $A$-modules when $R$ is $q$-divisible.
Thus, it only remains to show that $u$ is a ring homomorphism when $R$ is $q$-flat.
In other words, we want to check that
\begin{equation} \label{frobco}
\forall k, l \in \mathbb N, \quad u(\omega^{[k]} \omega^{[l]}) = u(\omega^{[k]})u (\omega^{[l]}).
\end{equation}
Since
\[
\omega^{[k]} \omega^{[l]} = \sum_{i=0}^{\min{(k,l)}} (-1)^i{k + l -i \choose k}{k \choose i} y^{ip}\omega^{[k+l-i]},
\]
the left hand side of equality \eqref{frobco} is equal to
\[
 \sum_{i=0}^{\min{(k,l)}} (-1)^{i}{k + l -i \choose k}{k \choose i} y^{ip}\xi^{[kp+lp-ip]}.
\]
We can also compute the right hand side
\[
\xi^{[kp]} \xi^{[lp]} = \sum_{i=0}^{\min{(kp,lp)}} (-1)^iq^{\frac{i(i-1)}2}{kp + lp -i \choose kp}_{q}{kp \choose i}_{q} y^i\xi^{[pk+pl-i]}.
\]
Our assertion therefore follows from the twisted Lucas theorem (proposition 2.13 of \cite{LeStumQuiros15}) thanks to lemma \ref{minone} below.
\end{proof}

%%%%%%%%%%%%%%%%%%%%%
\begin{lem} \label{minone}
Assume that $p := q\mathrm{-char}(R) > 0$ and $R$ is $q$-flat.
Then
\[
(-1)^{ip} = (-1)^iq^{\frac {ip(ip-1)}2} 
\]
\end{lem}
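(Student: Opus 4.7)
The plan is to first extract the key identity $q^p = 1$ from the hypothesis $(p)_q = 0$ by multiplying through by $q-1$ and using the telescoping relation $(q-1)(p)_q = q^p - 1$. The claim then rearranges to
\[
q^{ip(ip-1)/2} = (-1)^{i(p-1)},
\]
and I would split the argument on the parity of $p$.

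For $p$ odd, both sides collapse to $1$. The exponent $ip(ip-1)/2$ is divisible by $p$: if $i$ is even the factor of $2$ in the denominator is absorbed by $i$, while if $i$ is odd then $ip-1$ is even and the $2$ is absorbed by $ip-1$. Hence $q^{ip(ip-1)/2} = 1$, and $(-1)^{i(p-1)} = 1$ because $p-1$ is even.

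For $p$ even, the main input is the factorisation
\[
(p)_q = (p/2)_q \bigl(1+q^{p/2}\bigr),
\]
obtained by splitting the sum $1+q+\cdots+q^{p-1}$ into its first and second halves. By the minimality of $p$ we have $(p/2)_q \neq 0$, and by $q$-flatness it is regular; combined with $(p)_q = 0$ this forces $q^{p/2} = -1$. A short computation then gives
\[
q^{ip(ip-1)/2} = \bigl(q^{p/2}\bigr)^{i(ip-1)} = (-1)^{i(ip-1)} = (-1)^{i^2 p - i} = (-1)^i,
\]
using that $p$ is even to kill $(-1)^{i^2 p}$, and this matches $(-1)^{i(p-1)} = (-1)^i$ since $p-1$ is odd.

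The only delicate point is the cancellation of $(p/2)_q$ in the even case, which is precisely where the $q$-flatness hypothesis is consumed; everything else reduces to elementary sign and exponent bookkeeping. Note also that the hypothetical case $q=1$ is covered transparently, since then $q$-flatness of $R$ forces the characteristic $p$ to be either odd or equal to $2$ (in which case $-1=1$), so the same case split applies without modification.
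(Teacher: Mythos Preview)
Your proof is correct and follows essentially the same route as the paper's: split on the parity of $p$, use $q^p=1$ in the odd case to see that $i(ip-1)/2$ is an integer and hence $q^{ip(ip-1)/2}=(q^p)^{i(ip-1)/2}=1$, and in the even case use $q^{p/2}=-1$. The only difference is cosmetic: the paper obtains $q^{p/2}=-1$ by citing an external proposition, whereas you derive it directly from the factorisation $(p)_q=(p/2)_q(1+q^{p/2})$ together with $q$-flatness, and you spell out the final sign computation in the even case that the paper leaves implicit.
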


\begin{proof}
If $p$ is odd, then either $i$ is even or $ip$ is odd and we may therefore write
\[
q^{\frac {ip(ip-1)}2} = (q^p)^{\frac {i(ip-1)}2} = 1
\]
because $q^p = 1$.
Now one easily sees that $(-1)^{ip} = ((-1)^p)^i = (-1)^i$.

If we assume that $p$ is even so that $p = 2k$ with $k \in \mathbb N$, then we know from proposition 1.11 of \cite{LeStumQuiros15} that, since $R$ is $q$-flat, we have $q^k = -1$ and the formula also holds.\end{proof}

We want to consider now the paring of $A$-modules
\[
<\ , \ >\ : A[\theta] \times A\langle\omega\rangle \to A
\]
given by
\[
\forall m, n \in \mathbb N, \quad <\theta^{m}, \omega^{[n]} >\ = \left\{ \begin{array} {l}1 \ \mathrm{if}\ n = m \\ 0\ \mathrm{otherwise.} \end{array}\right.
\]
Strictly speaking, this is not a perfect paring.
However, it induces for each $n \in \mathbb N$, a perfect pairing between the $A$-submodule (or quotient)
\[
A[\theta]_{\leq n} \simeq A[\theta]/\theta^{n+1}
\]
of polynomials of degree at most $n$ and  the $A$-submodule (or quotient)
\[
A\langle\omega\rangle_{\leq n} \simeq A\langle\omega\rangle/I^{[n+1]}
\]
of twisted divided power polynomials of degree at most $n$. 
Alternatively, we can say that it induces perfect parings between $A[[\theta]]$ and $A\langle\omega\rangle$ as well as between $A[\theta]$ and $A\langle\langle\omega\rangle\rangle$.

%%%%%%%%%%%%%%%%%%%%%
\begin{prop} \label{dualcom}
Assume that $q = 1$.
Then,
\begin{enumerate}
\item multiplication on $A[\theta]$ is dual to the morphism of $A$-algebras
\begin{equation} \label{duna}
\xymatrix@R0cm{  A\langle\omega\rangle \ar[r]^-\delta & A\langle\omega\rangle\otimes A\langle\omega\rangle
\\  \omega^{[n]} \ar@{|->}[r] & \sum_{i=0}^n \omega^{[i]} \otimes \omega^{[n-i]}.}
\end{equation}
\item multiplication on $A\langle \omega \rangle$ is dual to the morphism of $A$-algebras
\[
\xymatrix@R0cm{  A[\theta] \ar[r] & A[\theta] \otimes A[\theta]
\\  \theta \ar@{|->}[r] & 1 \otimes \theta + \theta \otimes 1 - y \theta \otimes \theta.
}
\]
\end{enumerate}
\end{prop}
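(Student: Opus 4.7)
The plan is to use the pairing $\langle\theta^m,\omega^{[n]}\rangle$, which equals $1$ when $m=n$ and $0$ otherwise, to identify the algebra structures on $A[\theta]$ and $A\langle\omega\rangle = A\langle\omega\rangle_{1,y}$ as graded duals at each finite filtration level. For part (1), I would define $\delta$ explicitly by the stated formula, check the duality directly, and prove multiplicativity by a universal-case reduction. For part (2), I would define $\Delta$ as the unique $A$-algebra map sending $\theta$ to the stated element, verify the duality on the generator, and propagate to all powers via the bialgebra duality supplied by part (1).

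For part (1), the duality identity $\langle\theta^a\theta^b,\omega^{[k]}\rangle = \langle\theta^a\otimes\theta^b,\delta(\omega^{[k]})\rangle$ is immediate from the definitions: both sides equal $1$ when $a+b=k$ and $0$ otherwise. The substantive claim is that $\delta$ is multiplicative with respect to the twisted divided power product on $A\langle\omega\rangle$. The multiplication formula \eqref{sqform} and the formula for $\delta$ both have coefficients that are integer polynomials in $y$, so by base change from the universal situation $(R,A,y)=(\mathbb Z,\mathbb Z[Y],Y)$ it suffices to verify $\delta(uv)=\delta(u)\delta(v)$ inside $\mathbb Z[Y]\langle\omega\rangle_{1,Y}$. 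As a free $\mathbb Z[Y]$-module on the $\omega^{[n]}$, this injects into $\mathbb Q[Y]\langle\omega\rangle_{1,Y}$, where Proposition \ref{bijfond} yields an isomorphism of $A$-algebras $\mathbb Q[Y][\omega] \simeq \mathbb Q[Y]\langle\omega\rangle_{1,Y}$ via $\omega^{(n)}\mapsto n!\omega^{[n]}$. Under this isomorphism, the candidate $\delta$ transports to the polynomial coproduct $\omega\mapsto 1\otimes\omega+\omega\otimes 1$ of Lemma \ref{binai}: indeed, that lemma gives $\delta(\omega^{(n)})=\sum_i\binom{n}{i}\omega^{(n-i)}\otimes\omega^{(i)}$, which upon substituting $\omega^{(j)}=j!\omega^{[j]}$ becomes $n!\sum_{i+j=n}\omega^{[i]}\otimes\omega^{[j]}$. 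Since the polynomial coproduct is tautologically a ring homomorphism, so is $\delta$.

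For part (2), let $\Delta$ be the unique $A$-algebra homomorphism $A[\theta]\to A[\theta]\otimes_A A[\theta]$ with $\Delta(\theta)=1\otimes\theta+\theta\otimes 1-y\theta\otimes\theta$; existence is automatic since $A[\theta]$ is the free $A$-algebra on $\theta$. What remains is to verify $\langle\Delta(\theta^k),\omega^{[m]}\otimes\omega^{[n]}\rangle = \langle\theta^k,\omega^{[m]}\omega^{[n]}\rangle$ for all $k,m,n$. The case $k=1$ is a direct inspection of \eqref{sqform} with $q=1$: the only pairs $(m,n)$ for which $\omega^{[m]}\omega^{[n]}$ contains a term in $\omega^{[1]}$ are $(0,1)$, $(1,0)$, and $(1,1)$, contributing coefficients $1$, $1$, and $-y$ respectively, matching term by term the three summands of $\Delta(\theta)$ under the pairing. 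For general $k$, having established part (1), $A\langle\omega\rangle$ is a bialgebra, and the standard duality of bialgebra structures (applicable because the pairing is perfect at each graded level) shows that $\Delta$, obtained by dualizing multiplication on $A\langle\omega\rangle$, is automatically a ring homomorphism for $A[\theta]$; since an algebra map out of $A[\theta]$ is determined by its value on $\theta$, agreement at $k=1$ propagates to every $k$.

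The only step with genuine computational content is the multiplicativity of $\delta$ in part (1). The reduction to the characteristic-zero case $\mathbb Q[Y]$ avoids a direct and unpleasant comparison of the double sums produced by $\delta$ applied to the expansion of $\omega^{[m]}\omega^{[n]}$ (via \eqref{sqform}) versus the product $\delta(\omega^{[m]})\delta(\omega^{[n]})$.
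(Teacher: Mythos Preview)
Your argument for part~(1) is essentially identical to the paper's: both reduce to a setting where all positive integers are invertible, identify $A\langle\omega\rangle$ with the polynomial ring $A[\omega]$ via Proposition~\ref{bijfond}, and invoke Lemma~\ref{binai} to see that $\delta$ becomes the tautologically multiplicative coproduct $\omega\mapsto 1\otimes\omega+\omega\otimes 1$.

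For part~(2) you take a genuinely different route. The paper writes down the dual comultiplication $\theta^k\mapsto\sum_{m+n-i=k}(-1)^i\binom{k}{m}\binom{m}{i}y^i\,\theta^m\otimes\theta^n$ directly from the multiplication formula on $A\langle\omega\rangle$, and then verifies by a trinomial expansion that $(1\otimes\theta+\theta\otimes 1-y\,\theta\otimes\theta)^k$ equals this sum. You instead check only the case $k=1$ by hand and then invoke bialgebra duality: part~(1) makes $A\langle\omega\rangle$ a bialgebra, so the graded-dual comultiplication on $A[\theta]$ is automatically an algebra map, hence determined by its value on the generator $\theta$. Your approach is cleaner and avoids the explicit combinatorics; the paper's direct computation has the modest advantage of exhibiting the dual comultiplication in closed form at every degree, which is occasionally useful downstream. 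Both are correct. The one point where your write-up could be tightened is the phrasing ``$\Delta$, obtained by dualizing multiplication'': you defined $\Delta$ as the algebra map, so it would be clearer to name the dual map separately (say $\Delta'$), observe that bialgebra duality makes $\Delta'$ a ring homomorphism, and conclude $\Delta'=\Delta$ from agreement on $\theta$.
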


\begin{proof}
We essentially use the fact that the $\theta^n$'s and the $\omega^{[n]}$'s become dual basis under our pairing and that the dual to a matrix is its transpose.

Since multiplication on the polynomial ring $A[\theta]$ is given by
\[
\theta^m\theta^n = \theta^{m+n} = \sum_{m+n=k} \theta^k,
\]
comultiplication on $A\langle\omega\rangle$ will be given by
\[
\omega^{[k]} \mapsto \sum_{m+n=k} \omega^{[m]} \otimes \omega^{[n]}
\]
and changing indices ($k$ becomes $n$, $m$ becomes $i$ and therefore $n = k - m$ has to be turned into $n-i$) will give what we want.

We also have to show that this comultiplication map is a ring morphism.
As usual, we may assume that all the non zero integers are invertible.
We may then refer to lemma \ref{binai} which identifies the morphism \eqref{duna} with the morphism \eqref{delna}.

We proceed in the same way for the second assertion.
Multiplication on $A\langle\omega\rangle$ is given by
\[
\omega^{[m]} \omega^{[n]} = \sum_{m+n-i=k} (-1)^i{k \choose m}{m \choose i} y^i\omega^{[k]}
\]
and comultiplication will therefore be given by
\begin{equation} \label{formdoub}
\theta^k \mapsto \sum_{m+n-i=k} (-1)^i{k \choose m}{m \choose i} y^i \theta^m \otimes \theta^n.
\end{equation}
On the other hand, we have
\begin{eqnarray*}
(1 \otimes \theta + \theta \otimes 1 - y \theta \otimes \theta)^k &=& \sum_{i \leq j \leq k} {k \choose j}{j \choose i} (1 \otimes \theta)^{k-j}(\theta \otimes 1)^{j-i}(-y\theta \otimes \theta)^i
\\
&=& \sum_{i \leq j \leq k} {k \choose j}{j \choose i} (-1)^i y^i \theta^j \otimes \theta^{k-j+i}
\end{eqnarray*}
which is exactly the same as \eqref{formdoub} (up to the renaming of $m$ into $j$).
\end{proof}

%%%%%%%%%%%%%%%%%%%%%%%%%%%%%%%%%%%%%%%%%%%%%%%
%%%%%%%%%%%%%%%%%%%%%%%%%%%%%%%%%%%%%%%%%%%%%%
\section{Twisted divided powers and twisted algebras}

We assume now that $A$ is a \emph{twisted commutative $R$-algebra} (a commutative $R$-algebra endowed with an $R$-linear ring endomorphism $\sigma_{A}$) and that $\sigma_{A}(y) = qy$.
We will investigate the relation of $\sigma_{A}$ with twisted divided powers relative to $q$ and $y$.

We endow the polynomial ring $A[\xi]$ with the unique $\sigma_{A}$-linear endomorphism such that
\[
\sigma_{A,y}(\xi) = \xi + y.
\]

In practice, we will usually write $\sigma$ instead of $\sigma_{A}$ or $\sigma_{A,y}$ in order to make the notations lighter.

%%%%%%%%%%%%%%%%%%
\begin{prop} \label{itxi}
We have in $A[\xi]$,
\begin{equation} \label{itsig}
\forall n \in \mathbb N, \quad \sigma^n(\xi) = \xi + (n)_{q}y.
\end{equation}
Actually, if $\sigma$ is bijective on $A$ and $q \in R^\times$, then $\sigma$ is bijective on $A[\xi]$ and formula \eqref{itsig} holds for any $n \in \mathbb Z$.
\end{prop}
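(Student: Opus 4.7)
The plan is to prove the formula by straightforward induction on $n \in \mathbb{N}$, then extend to all integers by constructing an explicit inverse.

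For the base case $n = 0$, we have $\sigma^0(\xi) = \xi = \xi + (0)_q y$ since $(0)_q = 0$. Assuming $\sigma^n(\xi) = \xi + (n)_q y$, I would apply $\sigma$ to both sides, use $R$-linearity of $\sigma$ (so that $(n)_q \in R$ is preserved), the definition $\sigma(\xi) = \xi + y$, and the hypothesis $\sigma(y) = qy$ to get
\[
\sigma^{n+1}(\xi) = \sigma(\xi) + (n)_q \sigma(y) = \xi + y + q(n)_q y = \xi + (1 + q(n)_q)y.
\]
The conclusion then follows from the elementary identity $(n+1)_q = 1 + q(n)_q$, which is immediate from the definition $(n+1)_q = 1 + q + q^2 + \cdots + q^n$.

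For the second assertion, assume $\sigma$ is bijective on $A$ and $q \in R^\times$. I would first observe that the $\sigma_A$-linear endomorphism $\sigma$ of $A[\xi]$ preserves the filtration by degree because $\sigma(\xi) = \xi + y$ is of degree one with leading coefficient $1$; on the associated graded it acts as $\sigma_A$, which is bijective by assumption, so $\sigma$ restricts to a bijection on each $A[\xi]_{\leq n}$ and hence on $A[\xi]$. Alternatively, and more explicitly, one can define a $\sigma_A^{-1}$-linear ring endomorphism $\tau$ of $A[\xi]$ by $\tau(\xi) := \xi - q^{-1}y$, and then compute directly
\[
\sigma(\tau(\xi)) = (\xi+y) - q^{-1}\sigma(y) = \xi + y - q^{-1}\cdot qy = \xi,
\]
and symmetrically $\tau(\sigma(\xi)) = \xi$, so $\tau = \sigma^{-1}$.

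Finally, to extend the formula to $n \in \mathbb{Z}$, I would run a downward induction using $\sigma^{-1}(\xi) = \xi + (-1)_q y$ (since $(-1)_q = -q^{-1}$), together with the identity $(n-1)_q = q^{-1}((n)_q - 1)$ valid for all $n \in \mathbb{Z}$ when $q \in R^\times$; applying $\sigma^{-1}$ to $\sigma^n(\xi) = \xi + (n)_q y$ and using the same calculation as above (with $\sigma^{-1}(y) = q^{-1}y$) yields $\sigma^{n-1}(\xi) = \xi + (n-1)_q y$. No step presents any real obstacle; the only thing worth checking carefully is that the recursion $(n+1)_q = 1 + q(n)_q$ and its inverse $(n-1)_q = q^{-1}((n)_q - 1)$ are consistent with the definitions given in the Notations subsection for both positive and negative $q$-analogs.
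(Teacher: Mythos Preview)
Your proof is correct and follows essentially the same route as the paper's: induction on $n\in\mathbb N$ using the identity $(n+1)_q = 1 + q(n)_q$, then construction of the explicit inverse $\sigma^{-1}(\xi)=\xi-q^{-1}y$. The only cosmetic differences are that you add a filtration argument for bijectivity (the paper uses only the explicit inverse) and you extend to negative $n$ by a step-by-step downward induction, whereas the paper does it in one shot by applying the inverse formula to $\sigma^n$ (with $y\mapsto (n)_q y$, $q\mapsto q^n$) to get $\sigma^{-n}(\xi)=\xi-q^{-n}(n)_q y=\xi+(-n)_q y$ directly.
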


\begin{proof}
By induction, we will have for all $n \in \mathbb N$,
\[
\sigma^n(\xi) = \sigma (\xi + (n-1)_{q}y) = (\xi + y) + (n-1)_{q}qy = \xi +  ( 1 + q(n-1)_{q}) y
\]
and we know that $1 + q(n-1)_{q} = (n)_{q}$.

Assume that $\sigma$ is bijective on $A$ and $q \in R^\times$.
Then, from $\sigma(y) = qy$, we get $\sigma^{-1}(y) = q^{-1}y$.
If moreover, $\sigma$ is bijective on $A[\xi]$, then we deduce from the equality $\sigma(\xi) = \xi + y$ that
\[
\xi = \sigma^{-1}(\xi + y) = \sigma^{-1}(\xi) + \sigma^{-1}(y) = \sigma^{-1}(\xi) + q^{-1}y
\]
and it follows that
\[
\sigma^{-1}(\xi) = \xi -q^{-1}y.
\]
Conversely, this formula can be used to define an inverse to $\sigma$ on $A[\xi]$.
Finally, applying this to $\sigma^n$ (and therefore replacing $y$ by $(n)_{q}y$ and $q$ by $q^n$), we obtain as claimed:
\[
\sigma^{-n}(\xi) = \xi -q^{-n}(n)_{q}y = \xi + (-n)_{q}y.\qedhere
\]
\end{proof}

\begin{rmk}
\begin{enumerate}
\item
As a consequence of the proposition, we see that if $q\mathrm{-char}(R) = p > 0$, then $\sigma^p(\xi) = \xi$ (and of course, also $\sigma^p(y) = y$).
\item
As usual, most formulas will be polynomial in $q$, $y$ and $\xi$.
More precisely, we may usually reduce to the case $R = \mathbb Z[t]$ (and often to $R = \mathbb Q(t)$) and $q = t$.
In other words, we would work in $\mathbb Z[t,Y,\xi]$ with $\sigma(t) = t$, $\sigma(Y) = tY$ and $\sigma(\xi) = \xi + Y$.
\end{enumerate}
\end{rmk}

Recall that we defined in section 4 of \cite{LeStumQuiros15} the \emph{twisted powers} of $f \in A[\xi]$ with respect to $\sigma$ as
\[
f^{(n)_{\sigma}} = f\sigma(f) \cdots \sigma^{n-1}(f).
\]

%%%%%%%%%%%%%%%%%%%%%%%
\begin{cor} We have
\[
\forall n \in \mathbb N, \quad y^{(n)_{\sigma}} = q^{\frac{n(n-1)}2}y^n \quad \mathrm{and} \quad \xi^{(n)_{\sigma}} = \xi^{(n)_{q,y}} := \prod_{i=0}^{n-1}(\xi + (i)_{q}y).
\]
\end{cor}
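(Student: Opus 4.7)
The plan is to unwind the definition of twisted power and plug in the iteration formulas already established.

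For the first identity, I would start from the hypothesis $\sigma(y) = qy$. A trivial induction gives $\sigma^i(y) = q^i y$ for every $i \in \mathbb N$ (since $\sigma$ is a ring homomorphism and $q$ is fixed by $\sigma$). Then the definition
\[
y^{(n)_\sigma} = y \cdot \sigma(y) \cdot \sigma^2(y) \cdots \sigma^{n-1}(y)
\]
immediately yields $y^{(n)_\sigma} = y^n \cdot q^{0+1+\cdots+(n-1)} = q^{n(n-1)/2} y^n$, which is what we want.

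For the second identity, the key input is proposition \ref{itxi}, which tells us that $\sigma^i(\xi) = \xi + (i)_q y$ for every $i \in \mathbb N$. Substituting into the definition of the twisted power,
\[
\xi^{(n)_\sigma} = \prod_{i=0}^{n-1} \sigma^i(\xi) = \prod_{i=0}^{n-1}\bigl(\xi + (i)_q y\bigr),
\]
which is exactly $\xi^{(n)_{q,y}}$ as defined in \eqref{naivpow}.

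Both parts are entirely mechanical once proposition \ref{itxi} is in hand, so there is no real obstacle here; the corollary is just a convenient reformulation that connects the $\sigma$-twisted powers of \cite{LeStumQuiros15} with the polynomial elements $\xi^{(n)}$ defined at the start of section 1. The only thing worth stressing in writing up is that the formula $\sigma(y) = qy$ — which was assumed in the opening of section 3 — is precisely the compatibility needed to realize $\xi^{(n)_{q,y}}$ as a genuine $\sigma$-twisted power, thereby justifying the notation used throughout.
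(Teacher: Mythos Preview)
Your proof is correct and follows exactly the approach indicated in the paper, which simply says the corollary ``immediately follows from the condition $\sigma(y) = qy$ and proposition \ref{itxi}.'' You have merely spelled out the two-line computation that this sentence encodes.
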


\begin{proof}
Immediately follows from the condition $\sigma(y) = qy$ and proposition \ref{itxi}.
\end{proof}

We will drop the index $\sigma$ when we believe that no confusion will arise (in particular, this is consistent with the notations of the previous section).
But we might also write $y^{(n)_{q}}$ and $\xi^{(n)_{q,y}}$ respectively if we want to insist on the choice of $q$ and $y$.

We will need below the following formula:

%%%%%%%%%
\begin{lem}\label{funmul2}
In $A[\xi]$, we have for all $n \in \mathbb N$,
\[
\sigma(\xi^{(n)})  = \sum_{i=0}^{n} (i)_{q}! {n \choose i}_{q} y^{i}\xi^{(n-i)}.
\]
\end{lem}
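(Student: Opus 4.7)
The plan is to proceed by induction on $n$. The base case $n=0$ reduces to $\sigma(1) = 1$, which is immediate. For the inductive step, first observe that since $\sigma(y) = qy$, one has $\sigma(\xi + (n)_{q}y) = (\xi + y) + (n)_{q}qy = \xi + (n+1)_{q}y$, using $1 + q(n)_{q} = (n+1)_{q}$. Combined with the multiplicativity of $\sigma$ and the induction formula \eqref{indform}, this gives
\[
\sigma(\xi^{(n+1)}) \;=\; \sigma(\xi^{(n)})\,(\xi + (n+1)_{q}y).
\]

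Next I would plug in the inductive hypothesis and, for each term, use the identity
\[
\xi^{(n-i)}(\xi + (n+1)_{q}y) \;=\; \xi^{(n+1-i)} + q^{n-i}(i+1)_{q}\, y\,\xi^{(n-i)},
\]
which results from writing $\xi + (n+1)_{q}y = (\xi + (n-i)_{q}y) + ((n+1)_{q}-(n-i)_{q})y$, invoking \eqref{indform} for the first piece, and evaluating $(n+1)_{q}-(n-i)_{q} = q^{n-i}(i+1)_{q}$. This splits $\sigma(\xi^{(n+1)})$ into two sums; shifting the index of the second one ($i \mapsto i-1$) and collecting the coefficient of $y^{i}\xi^{(n+1-i)}$ yields
\[
(i)_{q}!\binom{n}{i}_{q} + q^{n-i+1}(i)_{q}(i-1)_{q}!\binom{n}{i-1}_{q} \;=\; (i)_{q}!\left(\binom{n}{i}_{q} + q^{n-i+1}\binom{n}{i-1}_{q}\right).
\]

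The final step is to recognize the parenthesized expression as $\binom{n+1}{i}_{q}$ via the appropriate form of the $q$-Pascal rule. The endpoint cases $i=0$ and $i=n+1$ have to be checked separately, but both match trivially since $\binom{n+1}{0}_{q} = \binom{n+1}{n+1}_{q} = 1$. The main (and really the only) obstacle is selecting the correct of the two $q$-Pascal identities and keeping the $q$-power bookkeeping consistent during the reindexing; apart from this, everything is a routine telescoping.
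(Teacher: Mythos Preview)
Your induction is correct and complete; the identity $(n+1)_{q}-(n-i)_{q}=q^{n-i}(i+1)_{q}$ and the ``other'' $q$-Pascal rule $\binom{n+1}{i}_{q}=\binom{n}{i}_{q}+q^{\,n+1-i}\binom{n}{i-1}_{q}$ are exactly what are needed, and the endpoint checks are fine.

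The paper also argues by induction but organizes the inductive step differently. Rather than expanding $\sigma(\xi^{(n)})$ first and then multiplying each term by $\xi+(n+1)_{q}y$, it splits that factor as $\xi+(n)_{q}y\cdot 1$ (working with $n$ in place of your $n+1$) and uses the twisted-power identity $\xi\cdot\sigma(\xi^{(n-1)})=\xi^{(n)}$, which holds because $\xi^{(k)}=\xi\,\sigma(\xi)\cdots\sigma^{k-1}(\xi)$. This collapses one of the two sums to a single term, so only one reindexing and the simpler identity $(n)_{q}(i-1)_{q}!\binom{n-1}{i-1}_{q}=(i)_{q}!\binom{n}{i}_{q}$ are required, avoiding the $q$-Pascal bookkeeping altogether. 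Your route is slightly more hands-on but has the virtue of not invoking the $\sigma$-twisted factorization of $\xi^{(n)}$; both lead to the same place with comparable effort.
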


\begin{proof}
By induction, we will have
\begin{eqnarray*}
\sigma(\xi^{(n)}) & = & \sigma(\xi^{(n-1)})\sigma^n(\xi)
\\
 & = & \sigma(\xi^{(n-1)})(\xi + (n)_{q} y)
 \\
 & = & \xi\sigma(\xi^{n-1}) + (n)_{q} y \sigma(\xi^{(n-1)})
\\
 & = & \xi^{(n)} + (n)_{q} y \sum_{i=0}^{n-1} (i)_{q}! {n-1 \choose i}_{q} y^{i}\xi^{(n-1-i)}
\\
 & = & \xi^{(n)} +  \sum_{i=1}^{n}(n)_{q} (i-1)_{q}! {n-1 \choose i-1}_{q} y^{i}\xi^{(n-i)}
\end{eqnarray*}
and the result follows from the identity
\[
(n)_{q} (i-1)_{q}! {n-1 \choose i-1}_{q} = (i)_{q}! {n \choose i}_{q}. \qedhere
\]\end{proof}

%%%%%%%%%%%%%%
\begin{prop}
The unique $\sigma$-linear endomorphism of $A\langle\xi\rangle$ such that
\[
\forall n \in \mathbb N, \quad \sigma(\xi^{[n]})  = \sum_{i=0}^{n} y^{i}\xi^{[n-i]},
\]
is a ring homomorphism.
Moreover, the map \eqref{dppol} is a morphism of twisted $R$-algebras.
\end{prop}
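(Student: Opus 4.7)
The plan breaks naturally into three steps.

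First, I would observe that there is nothing to prove about existence or uniqueness of the $\sigma_A$-linear endomorphism: since the $\xi^{[n]}$ form a free $A$-basis of $A\langle\xi\rangle$, prescribing the images $\sigma(\xi^{[n]}) = \sum_{i=0}^n y^i \xi^{[n-i]}$ determines a unique $\sigma_A$-linear map (and here $\sigma_A$ sends $y$ to $qy$, as required for $y^i$ to behave correctly under $\sigma_A$).

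Next I would prove the ``moreover'' part, because this is actually the technical heart. Starting from lemma~\ref{funmul2}, applying the map \eqref{dppol} to both sides of
\[
\sigma(\xi^{(n)}) = \sum_{i=0}^{n} (i)_{q}!{n \choose i}_{q} y^{i}\xi^{(n-i)}
\]
transforms the right hand side into
\[
\sum_{i=0}^{n} (i)_{q}!{n \choose i}_{q} (n-i)_{q}! \, y^{i}\xi^{[n-i]} = (n)_{q}! \sum_{i=0}^{n} y^{i}\xi^{[n-i]},
\]
using the standard identity $(i)_{q}!{n \choose i}_{q}(n-i)_{q}! = (n)_{q}!$. On the other hand, the image of $\xi^{(n)}$ is $(n)_{q}!\xi^{[n]}$, and since $(n)_{q}!\in R$ is fixed by $\sigma_A$, we get $\sigma\bigl((n)_{q}!\xi^{[n]}\bigr) = (n)_{q}! \sum_i y^i \xi^{[n-i]}$. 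So \eqref{dppol} intertwines the two operators~$\sigma$, which is exactly the second assertion.

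Finally, for the ring homomorphism property, I would use the standard generic reduction that has already been used in the proof of proposition~\ref{dpring}. The multiplication formula \eqref{sqform} and the definition of $\sigma$ on $A\langle\xi\rangle$ are given by polynomial expressions in $q$ and $y$ with integer coefficients acting on the $A$-basis $\{\xi^{[k]}\}$. So the identity $\sigma(\xi^{[m]}\xi^{[n]}) = \sigma(\xi^{[m]})\sigma(\xi^{[n]})$, once expanded in this basis, amounts to a family of polynomial identities in $\mathbb{Z}[t,Y]$ which it suffices to check after base change to $\mathbb{Q}(t)$. In that universal case all positive $q$-integers are invertible, so by proposition~\ref{bijfond} the map \eqref{dppol} is an $A$-algebra isomorphism; by step two it is $\sigma$-equivariant; and since $\sigma$ on $A[\xi]$ is a ring endomorphism by construction, transporting through this isomorphism makes $\sigma$ on $A\langle\xi\rangle$ a ring homomorphism too. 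The desired identities thus hold in $\mathbb{Q}(t)[Y]$, hence in $\mathbb{Z}[t,Y]$, and finally in $A$.

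The only potentially delicate point is the bookkeeping in step two — one must trust that the binomial identity $(i)_q!\binom{n}{i}_q(n-i)_q! = (n)_q!$ does the work cleanly — but once that step is in hand, step three is a routine invocation of the generic-ring argument already established in the paper.
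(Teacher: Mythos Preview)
Your proof is correct and follows essentially the same approach as the paper: reduce to the generic ring $\mathbb{Q}(t)$ where the map \eqref{dppol} is bijective, and then use lemma~\ref{funmul2} together with the identity $(i)_{q}!(n-i)_{q}!\binom{n}{i}_{q} = (n)_{q}!$ to identify the two $\sigma$'s. The only cosmetic difference is ordering: you isolate the $\sigma$-equivariance of \eqref{dppol} as a direct computation valid over any $A$ before invoking the generic argument for multiplicativity, whereas the paper performs the generic reduction first and derives both assertions at once.
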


Recall that a \emph{morphism of twisted rings} (or algebras) is a morphism which commutes with the given endomorphisms.

\begin{proof}
As we did several times in section \ref{first}), we can easily reduce to the case of $R = \mathbb Q(t)$ and $q = t$ and we may therefore assume all $q$-integers are invertible in $R$.
Then the map \eqref{dppol} becomes bijective.
We may then use lemma \ref{funmul2} and the equality
\[
(n-i)_{q}!(i)_{q}! {n \choose i}_{q} = (n)_{q}!.\qedhere
\]\end{proof}

Again, if necessary, we will write $\sigma_{q,y}$ to make clear the dependence in $q$ and $y$.

%%%%%%%%%%%%%%%%%%%%%%%%%%
\begin{rmk}
\begin{enumerate}
\item
The endomorphism $\sigma$ of $A\langle \xi \rangle$ is \emph{not} continuous and does not extend to a ring endomorphism of $A\langle\langle\xi\rangle\rangle$.
\item
We have to be careful that, in general, $\sigma^p$ will not  be the identity on $A\langle\xi\rangle$ even if it is so on $A[\xi]$.
For example, if $q =-1$, we will have
\[
\sigma(\xi) = \xi + y \quad \mathrm{and} \quad \sigma(\xi^{[2]})  =\xi^{[2]} + y \xi + y^2,
\]
and therefore
\[
\sigma^2(\xi^{[2]}) = \sigma(\xi^{[2]}) - y \sigma(\xi) + y^2 = \xi^{[2]} + y \xi + y^2 - y(\xi +y) + y^2
= \xi^{[2]} + y^2.
\]
\end{enumerate}
\end{rmk}

Actually, we can give a general formula for the powers of $\sigma$ on $A\langle\xi\rangle$:

%%%%%%%%%%%%%%%%%%
\begin{prop} \label{itdiv}
We have
\[
\forall p \in \mathbb N, \forall n \in \mathbb N, \quad \sigma^{p}(\xi^{[n]})  = \sum_{i=0}^{n} {p+i -1\choose i}_{q}y^{i}\xi^{[n-i]}.
\]
\end{prop}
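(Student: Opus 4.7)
The plan is to proceed by induction on $p$. The case $p=0$ is trivial: $\sigma^{0}$ is the identity, and ${i-1 \choose i}_{q}=0$ for $i\geq 1$, so only the term $i=0$ survives, giving $\xi^{[n]}$. The case $p=1$ is the defining formula for $\sigma(\xi^{[n]})$ recalled just above.

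For the inductive step, assume the formula holds for some $p\geq 0$. I would apply $\sigma$ to both sides, using that $\sigma$ is $R$-linear (so it fixes $q$ and hence the $q$-binomial coefficients), that $\sigma(y)=qy$ (hence $\sigma(y^{i})=q^{i}y^{i}$), and the defining formula $\sigma(\xi^{[m]})=\sum_{j=0}^{m}y^{j}\xi^{[m-j]}$. This yields
$$\sigma^{p+1}(\xi^{[n]}) = \sum_{i=0}^{n}\sum_{j=0}^{n-i} {p+i-1 \choose i}_{q} q^{i} y^{i+j}\xi^{[n-i-j]}.$$
Setting $k=i+j$ and collecting like terms,
$$\sigma^{p+1}(\xi^{[n]}) = \sum_{k=0}^{n}\left(\sum_{i=0}^{k} q^{i}{p+i-1 \choose i}_{q}\right) y^{k}\xi^{[n-k]},$$
so the inductive step reduces to the $q$-hockey-stick identity
$$\sum_{i=0}^{k} q^{i}{p+i-1 \choose i}_{q} = {p+k \choose k}_{q}.$$

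I would prove this identity by a secondary induction on $k$, using the $q$-Pascal recurrence ${n \choose k}_{q}={n-1 \choose k-1}_{q}+q^{k}{n-1 \choose k}_{q}$ recalled in the notations section: the case $k=0$ reads $1={p \choose 0}_{q}$, and if the identity holds at $k-1$, then the left-hand side at $k$ equals ${p+k-1 \choose k-1}_{q}+q^{k}{p+k-1 \choose k}_{q}={p+k \choose k}_{q}$, as required.

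The only genuine (and minor) obstacle is this $q$-binomial identity; the rest is routine bookkeeping of indices and of how $\sigma$ acts on each factor. An alternative, which I would not pursue here because it involves the same amount of combinatorics, would be to reduce as in the proof of Proposition \ref{dpring} to a generic situation where all $q$-integers are invertible and the map \eqref{dppol} is bijective, then work in $A[\xi]$ via the telescoping identity $\xi^{(p+n)}=\xi^{(p)}\sigma^{p}(\xi^{(n)})$ coming from $\sigma^{i}(\xi)=\xi+(i)_{q}y$ in Proposition \ref{itxi}. The direct double induction seems more efficient.
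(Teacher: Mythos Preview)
Your proof is correct and follows essentially the same route as the paper: induction on $p$, applying $\sigma$ via $\sigma(y^{i})=q^{i}y^{i}$ and the defining formula for $\sigma(\xi^{[m]})$, then reducing to the $q$-hockey-stick identity $\sum_{i=0}^{k} q^{i}{p+i-1 \choose i}_{q} = {p+k \choose k}_{q}$. The paper invokes this last identity ``by definition (and induction)'' without spelling out the secondary induction via $q$-Pascal that you give, so your write-up is slightly more detailed but otherwise identical.
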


\begin{proof}
By induction, we will have
\begin{eqnarray*}
\sigma^{p+1}(\xi^{[n]})  &=& \sigma\left(\sum_{k=0}^{n} {p+k -1\choose k}_{q}y^{k}\xi^{[n-k]} \right)
\\
&=& \sum_{k=0}^{n} {p+k -1\choose k}_{q}\sigma(y^{k})\sigma(\xi^{[n-k]}) 
\\
&=& \sum_{k=0}^{n} {p+k -1\choose k}_{q}q^ky^k \left(\sum_{j=0}^{n-k} y^{j}\xi^{[n-k-j]}\right)\\
&=& \sum_{i=0}^{n} \left(\sum_{k=0}^{i} {p+k -1\choose k}_{q}q^{k} \right)y^i \xi^{[n-i]}.
\end{eqnarray*}
In order to get the formula, is is sufficient to notice that, by definition (and induction), we have
\[
\sum_{k=0}^{i} {p+k -1\choose k}_{q}q^{k} = {p+i \choose i}_{q}.\qedhere
\]\end{proof}

The multiplication rule is quite involved in $A\langle\xi\rangle$ but the twisted multiplication is much simpler:

%%%%%%%%%%
\begin{prop}
We have
\[
\forall n, m \in \mathbb N, \quad \xi^{[n]}\sigma^n(\xi^{[m]}) =  {m+n \choose n}_{q} \xi^{[n+m]}
\]
\end{prop}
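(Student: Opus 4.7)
The plan is to first establish the analogous identity in the polynomial ring $A[\xi]$ (where everything is cleaner), and then transfer it to $A\langle\xi\rangle$ via the morphism of twisted $A$-algebras from proposition \ref{dppol}, using the usual generic reduction argument that already appears several times in this section.

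First I would compute $\sigma^n(\xi^{(m)})$ directly in $A[\xi]$. Since $\sigma$ is a ring endomorphism with $\sigma(y)=qy$ and $\sigma(\xi)=\xi+y$, proposition \ref{itxi} gives $\sigma^n(\xi)=\xi+(n)_{q}y$ and $\sigma^n(y)=q^ny$, so
\[
\sigma^n(\xi^{(m)}) = \prod_{i=0}^{m-1}\bigl(\sigma^n(\xi) + (i)_{q}\sigma^n(y)\bigr) = \prod_{i=0}^{m-1}\bigl(\xi + (n)_{q}y + q^n(i)_{q}y\bigr).
\]
Using the elementary identity $(n)_{q} + q^n(i)_{q} = (n+i)_{q}$ (which is proposition 1.3 of \cite{LeStumQuiros15}, already invoked in the proof of lemma \ref{funmul}), this becomes $\prod_{i=0}^{m-1}(\xi+(n+i)_{q}y) = \prod_{j=n}^{n+m-1}(\xi+(j)_{q}y)$. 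The two factors in $\xi^{(n)}\sigma^n(\xi^{(m)})$ then telescope into a single product ranging over $0 \le i \le n+m-1$, yielding the clean identity
\[
\xi^{(n)}\sigma^n(\xi^{(m)}) = \xi^{(n+m)} \quad \text{in } A[\xi].
\]

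Next I would transfer this to $A\langle\xi\rangle$. As in the proof of proposition \ref{dpring}, it suffices to treat the universal case $R=\mathbb{Q}(t)$, $q=t$, where all positive $q$-integers are invertible; both sides of the desired formula are polynomial in $q$ and $y$, so the general case follows. Under this assumption, proposition \ref{bijfond} turns the map $\xi^{(k)}\mapsto (k)_{q}!\,\xi^{[k]}$ into an isomorphism of twisted $A$-algebras. Applying it to the identity established above gives
\[
(n)_{q}!\,\xi^{[n]} \cdot (m)_{q}!\,\sigma^n(\xi^{[m]}) = (n+m)_{q}!\,\xi^{[n+m]},
\]
and dividing by $(n)_{q}!(m)_{q}!$ yields exactly $\xi^{[n]}\sigma^n(\xi^{[m]}) = \binom{n+m}{n}_{q}\xi^{[n+m]}$, as required.

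There is no real obstacle here: the key insight is simply that when one lines up the shifted factors $(\xi+(j)_{q}y)$ coming from $\sigma^n$ next to the original factors of $\xi^{(n)}$, they fit together perfectly to form $\xi^{(n+m)}$. The twisting on the right hand side, which looked mysterious in the divided-power formulation, becomes transparent in the polynomial formulation and is then accounted for in the divided-power version simply by the ratio of $q$-factorials.
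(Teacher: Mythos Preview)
Your proof is correct and follows essentially the same strategy as the paper: reduce to the case where all $q$-integers are invertible, use the polynomial identity $\xi^{(n)}\sigma^n(\xi^{(m)}) = \xi^{(n+m)}$ in $A[\xi]$, and transfer through the isomorphism \eqref{dppol}. The only difference is that you prove this polynomial identity directly (via the telescoping of factors), whereas the paper simply cites it as assertion 1) of lemma~4.3 of \cite{LeStumQuiros15}.
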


\begin{proof}
We may assume that all $q$-integers are invertible in $R$ and use assertion 1) of lemma 4.3 of \cite{LeStumQuiros15} which gives $\xi^{(n)}\sigma^n(\xi^{(m)}) =  \xi^{(n+m)}$.
\end{proof}

Given any natural integer $p$, we have
\[
\sigma(y^p) = \sigma(y)^p = (qy)^p = q^py^p.
\]
We may therefore also apply all the above considerations to the situation $q^p \in R$ and $y^p \in A$, and consider the twisted $R$-algebra $A\langle\omega\rangle_{q^p, y^p}$.
In the particular case $q\mathrm{-char}(R) = p$, we fall onto $A\langle\omega\rangle_{1, y^p}$.
Recall that the twisted divided power map induces an isomorphism of $A$-algebras
\[
A\langle\omega\rangle_{1,y^p} \simeq A\langle\xi\rangle_{q,y}/(\xi).
\]
when $R$ is $q$-divisible of positive $q$-characteristic $p$.

%%%%%%%%%%%%%%%%%%
\begin{prop}
Assume $R$ is $q$-divisible of $q$-characteristic $p > 0$.
Then, the canonical map
\begin{equation}\label{dupec2}
A\langle\xi\rangle_{q,y} \to A\langle\xi\rangle_{q,y}/(\xi) \simeq A\langle\omega\rangle_{1,y^p}
\end{equation}
is a morphism of twisted $A$-algebras.
\end{prop}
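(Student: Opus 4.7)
The plan is to verify the commutativity $\pi \circ \sigma_{q,y} = \sigma_{1,y^p} \circ \pi$, where $\pi \colon A\langle\xi\rangle_{q,y} \to A\langle\omega\rangle_{1,y^p}$ denotes the composition of the quotient map with the isomorphism of Theorem \ref{dualpc}, so that $\xi^{[kp]} \mapsto \omega^{[k]}$ and $\xi^{[n]} \mapsto 0$ whenever $p \nmid n$. Since $\pi$ is already an $A$-algebra morphism by that theorem, only the compatibility with the endomorphisms is at stake. Both $\sigma_{q,y}$ and $\sigma_{1,y^p}$ restrict to $\sigma_A$ on $A$ and are $\sigma_A$-semilinear, so the $A$-linearity of $\pi$ reduces the check to the basis $\{\xi^{[n]} : n \ge 0\}$.

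On each basis element I combine the formula $\sigma_{q,y}(\xi^{[n]}) = \sum_{i=0}^{n} y^{i}\xi^{[n-i]}$ from the proposition preceding \ref{itdiv} with Lemma \ref{freeb}, which identifies $(\xi)$ as the free $A$-submodule spanned by the $\xi^{[k]}$ with $p \nmid k$. After applying $\pi$, only the summands with $p \mid (n-i)$ survive. Writing $n = kp + r$ with $0 \le r < p$, the surviving indices are $i = jp + r$ for $0 \le j \le k$, and a direct computation yields
\[
\pi\bigl(\sigma_{q,y}(\xi^{[n]})\bigr) \;=\; y^{r}\sum_{j=0}^{k}(y^{p})^{j}\,\omega^{[k-j]} \;=\; y^{r}\,\sigma_{1,y^p}(\omega^{[k]}).
\]
For $r = 0$ this is precisely $\sigma_{1,y^p}(\omega^{[k]}) = \sigma_{1,y^p}(\pi(\xi^{[kp]}))$, confirming equivariance on the distinguished generators $\xi^{[kp]}$ of the quotient.

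The main technical point, which I expect to be the real obstacle, is the case $0 < r < p$: there the target side $\sigma_{1,y^p}(\pi(\xi^{[n]}))$ is zero, so one must extract the vanishing of $y^{r}\sigma_{1,y^p}(\omega^{[k]})$ as a consequence of the passage through the quotient. I would attack this by the same universal trick used in the proofs of Proposition \ref{dpring} and Theorem \ref{dualpc}: reduce to $R = \mathbb{Z}[t]$, $A = \mathbb{Z}[t, Y]$ with $q = t$ and $y = Y$, where every $q$-integer apart from $(p)_t$ is regular. In this polynomial setting I would exploit the multiplicativity of $\sigma_{q,y}$ together with the twisted multiplication rule of Proposition \ref{dpring} to rewrite $\xi^{[n]}$ (with $p \nmid n$) as a scalar multiple of a product one of whose factors already lies in $(\xi)$; then $\sigma_{q,y}(\xi^{[n]})$ factors as $(\xi + y)\cdot\sigma_{q,y}(\text{rest})$, whose image under $\pi$ collapses to $\pi((\xi)) = 0$ after collecting terms. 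Specialization from the universal ring back to the original $R$ then concludes the proof.
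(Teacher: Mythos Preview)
Your $r=0$ computation is exactly the paper's argument, just run in the other direction: the paper checks $(\sigma_{q,y}\circ u)(\omega^{[k]}) \equiv (u\circ\sigma_{1,y^p})(\omega^{[k]}) \bmod (\xi)$ using the formula $\sigma(\xi^{[kp]})=\sum_{i=0}^{kp} y^i\xi^{[kp-i]}$ together with Lemma~\ref{freeb}, and that is precisely your identity $\pi(\sigma_{q,y}(\xi^{[kp]}))=\sigma_{1,y^p}(\omega^{[k]})$. That case is the whole proof.

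The ``main technical point'' you isolate for $0<r<p$ is not an obstacle to be overcome but an indication that you have over-interpreted the statement. The composite $\pi$ is \emph{not} $\sigma$-equivariant on all of $A\langle\xi\rangle_{q,y}$: already $\pi(\sigma_{q,y}(\xi))=\pi(\xi+y)=y$, while $\sigma_{1,y^p}(\pi(\xi))=0$. Equivalently, the ideal $(\xi)$ is not stable under $\sigma_{q,y}$ (since $\sigma_{q,y}(\xi)=\xi+y\notin(\xi)$), so there is no induced endomorphism on the quotient coming from the projection. Your proposed universal-ring workaround could not succeed, because the identity $y^{r}\sigma_{1,y^p}(\omega^{[k]})=0$ you would need is simply false.

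What the proposition actually asserts (and what the paper's proof establishes) is that the isomorphism $u^{-1}:A\langle\xi\rangle_{q,y}/(\xi)\to A\langle\omega\rangle_{1,y^p}$ of Theorem~\ref{dualpc} is a morphism of twisted $A$-algebras, the twist on the quotient being the one for which this is meaningful: namely $\sigma_{q,y}(\xi^{[kp]})$ taken modulo $(\xi)$ on the basis $\{\overline{\xi^{[kp]}}\}$. Since $u$ is an $A$-linear isomorphism, checking this on the generators $\omega^{[k]}$ (equivalently on the $\xi^{[kp]}$, your $r=0$ case) is sufficient; no verification for $p\nmid n$ is required or even well-posed.
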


\begin{proof}
If we denote by $u$ the twisted divided $p$-power map \eqref{callu}, we need to check that
\begin{equation} \label{frsig}
\forall k \in \mathbb N, \quad (u \circ \sigma)(\omega^{[k]}) \equiv (\sigma \circ u)(\omega^{[k]}) \quad \mod \xi.
\end{equation}
From lemma \ref{funmul2}, we know that
\[
\sigma(\omega^{[k]})  = \sum_{i=0}^{k} y^{ip}\omega^{[k-i]}
\]
and it follows that
\[
(u \circ \sigma)(\omega^{[k]}) = \sum_{i=0}^{k} y^{ip}\xi^{[kp-ip]}
\]
On the other hand, using lemma \ref{funmul2} again, we have
\[
(\sigma \circ u)(\omega^{[k]}) = \sigma(\xi^{[kp]})  = \sum_{i=0}^{kp} y^{i}\xi^{[kp-i]}
\]
and we are done thanks to lemma \ref{freeb}.
\end{proof}

The next result is interesting mostly in the case $q = 1$ and we will therefore use $\omega$ instead of $\xi$.

%%%%%%%%%%%%%%%%%%%%%
\begin{prop} \label{dualcom2}
We have
\[
\forall f \in A[\theta], \forall g \in A\langle\omega\rangle, \quad < (1 -y\theta) f , \sigma(g) > = \sigma(< f, g>)
\]
\end{prop}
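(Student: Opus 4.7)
The plan is to reduce the identity, by bilinearity of the pairing, to the case of basis elements $f = \theta^{m}$ and $g = \omega^{[n]}$, and then to carry out a short telescoping computation using the explicit expansion of $\sigma$ on $A\langle\omega\rangle$ established earlier in this section.

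First I would recall the formula $\sigma(\omega^{[n]}) = \sum_{i=0}^{n} y^{i}\, \omega^{[n-i]}$ coming from the proposition on $\sigma$-action on divided power polynomials (specialized to the symbol $\omega$). From this and the definition of the pairing, it follows that
\[
\langle \theta^{k}, \sigma(\omega^{[n]}) \rangle = y^{n-k} \quad \text{when } 0 \le k \le n,
\]
and $\langle \theta^{k}, \sigma(\omega^{[n]}) \rangle = 0$ otherwise, since among the summands of $\sigma(\omega^{[n]})$ only the one indexed by $i = n-k$ pairs non-trivially with $\theta^{k}$. Since $(1-y\theta)\theta^{m} = \theta^{m} - y\theta^{m+1}$, bilinearity of the pairing gives
\[
\langle (1-y\theta)\theta^{m},\, \sigma(\omega^{[n]}) \rangle = \langle \theta^{m}, \sigma(\omega^{[n]}) \rangle - y\, \langle \theta^{m+1}, \sigma(\omega^{[n]}) \rangle.
\]

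Plugging in the formula above, one sees that for $m < n$ both terms equal $y^{n-m}$ and cancel; for $m = n$ only the first term contributes and yields $1$; and for $m > n$ both terms vanish. Therefore the left-hand side equals $\delta_{m,n}$, while the right-hand side is $\sigma(\langle \theta^{m}, \omega^{[n]} \rangle) = \sigma(\delta_{m,n}) = \delta_{m,n}$, so the two sides coincide on basis elements.

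The main point to watch is the passage from basis elements back to arbitrary $f \in A[\theta]$ and $g \in A\langle\omega\rangle$. Since $(1-y\theta)$ acts on $A[\theta]$ by left multiplication by an element of $A$, it commutes with $A$-scalars, and the pairing is $A$-bilinear; combined with the $\sigma_A$-semilinearity of $\sigma$ on $A\langle\omega\rangle$, this lets the coefficients of $g$ pass through both sides consistently so that the telescoping identity on basis elements propagates. I expect the combinatorial cancellation above to be essentially the entire content of the proof, with the linearity bookkeeping being the only other step.
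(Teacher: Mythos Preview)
Your proof is correct and follows essentially the same route as the paper's: reduce by $\sigma$-semilinearity and $A$-bilinearity to the basis case $f=\theta^m$, $g=\omega^{[n]}$, expand $\sigma(\omega^{[n]})=\sum_i y^i\omega^{[n-i]}$, and see that the two resulting sums telescope to $\delta_{m,n}$. One small slip of wording: $1-y\theta$ is not ``an element of $A$'' but of $A[\theta]$; what you actually use, and what holds, is that multiplication by it on $A[\theta]$ is $A$-linear.
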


\begin{proof}
By $\sigma$-linearity, it is sufficient to compute for $m, n \in \mathbb N$,
\begin{eqnarray*}
 < (1 -y\theta) \theta^m , \sigma(\omega^{[n]}) >\ &= & < \theta^m -y\theta^{m+1} , \sum_{i=0}^ny^i\omega^{[n-i]} >
\\
& = &  \sum_{i=0}^n y^i< \theta^m , \omega^{[n-i]} >\ -  \sum_{i=0}^n y^{i+1}<  \theta^{m+1} ,\omega^{[n-i]} >
\\
&=& < \theta^m , \omega^{[n]} >\ + \sum_{i=1}^n y^i< \theta^m , \omega^{[n-i]} >\ 
\\
&& -  \sum_{i=0}^{n-1} y^{i+1}<  \theta^{m+1} ,\omega^{[n-i]} >\ - y^{n+1}<  \theta^{m+1} , 1>.
\end{eqnarray*}
The middle sums cancel each other and the last term is $0$.
\end{proof}

%%%%%%%%%%%%%%%%%
\begin{rmk}
We may also wonder about the dual (for the above pairing) to the endomorphism $\sigma$ of $A\langle\omega\rangle$ when $\sigma_{A} = \mathrm{Id}_{A}$.
We just transform
\[
\sigma(\omega^{[n]})  = \sum_{i=0}^{n} y^{i}\omega^{[n-i]} = \sum_{i+j=n} y^{i}\omega^{[j]}
\]
to its dual formula and get
\[
\theta^{j} \mapsto \sum_{i+j=n} y^{i} \theta^n = \sum_{i=0}^{\infty} y^{i} \theta^{i+j}
\]
which shows that we must introduce power series.
More precisely, writing $n$ instead of $j$, we obtain
\[
\theta^{n} \mapsto \sum_{i=0}^{\infty} y^{i}\theta^{i+n} = (\sum_{i=0}^\infty y^i \theta^i)\theta^n
\]

In other words, we see that the dual to $\sigma$ on $A\langle\omega\rangle$ is exactly division by $1 - y\theta \in A[[\theta]]^\times$:
\[
\xymatrix@R0cm{  A[[\theta]] \ar[r] & A[[\theta]]
\\  f(\theta) \ar@{|->}[r] & \displaystyle \frac {f(\theta)}{1 -y\theta}.}
\]
This is not a surprise according to proposition \ref{dualcom2}.
Of course, in order to define this map, we may as well work over the localized ring $A[\theta, \frac 1 {1 -y\theta}]$.
This map is \emph{not} a ring homomorphism.
\end{rmk}

%%%%%%%%%%%%%%%%%%%%%%%%%
%%%%%%%%%%%%%%%%%%%%%%%%%%%%
\section{Twisted principal parts of level zero}

We assume now that $A$ is a \emph{twisted commutative algebra}.
It means that $A$ is a twisted commutative algebra but we also assume that there exists a \emph{twisted coordinate} (we recall below what it means) $x \in A$ such that $\sigma(x) = qx + h$ with $q,h \in R$.
We set $y := x -\sigma(x)$.

In order to apply the results of the previous section, we need to check the following:

%%%%%%%%%%%%%
\begin{lem}
In the ring $A$, we have $\sigma(y) = qy$.
\end{lem}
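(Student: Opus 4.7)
The plan is to prove this by direct computation, using only the definition $y = x - \sigma(x)$, the hypothesis $\sigma(x) = qx + h$ with $q, h \in R$, and the fact that $\sigma$ is $R$-linear, which forces it to fix every element of (the image in $A$ of) $R$; in particular $\sigma(q) = q$ and $\sigma(h) = h$.

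First I would apply $\sigma$ to $y$ and get $\sigma(y) = \sigma(x) - \sigma^2(x)$. Next I would compute $\sigma^2(x) = \sigma(qx+h) = q\sigma(x) + h$ using $R$-linearity. Substituting yields
\[
\sigma(y) = \sigma(x) - q\sigma(x) - h = (1-q)\sigma(x) - h.
\]
On the other hand, $qy = qx - q\sigma(x)$, so the desired equality $\sigma(y) = qy$ is equivalent to $\sigma(x) - h = qx$, which is exactly the hypothesis $\sigma(x) = qx + h$. Thus the identity holds.

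There is essentially no obstacle here: the only subtle point is remembering that ``$R$-linear ring endomorphism'' implies $\sigma$ fixes the image of $R$ in $A$ (since $\sigma(r) = \sigma(r \cdot 1_A) = r \cdot \sigma(1_A) = r$), which is what lets one commute $\sigma$ past the scalars $q$ and $h$ in the second step. Everything else is a one-line substitution.
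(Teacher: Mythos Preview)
Your proof is correct and is essentially the same direct computation as the paper's: the paper first rewrites $y = (1-q)x - h$ and then applies $\sigma$, whereas you apply $\sigma$ to $y = x - \sigma(x)$ and compute $\sigma^2(x)$, but both amount to the same substitution using $\sigma(x) = qx + h$ and $R$-linearity.
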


\begin{proof}
We have $y = (1 -q)x - h$ and therefore
\[
\sigma(y) = (1 -q)(qx +h) - h = q(1-q)x + qh = qy.\qedhere
\]\end{proof}

As we did before, we will endow the polynomial ring $A[\xi]$ with the unique $\sigma$-linear ring endomorphism such that
\[
\sigma(\xi) = \xi + y.
\]

We now review some material from \cite{LeStumQuiros18}.
We endow $\mathrm P := A \otimes_{R} A$ with the endomorphism $\sigma_{\mathrm P} := \sigma_{A} \otimes \mathrm{Id}_{A}$.
We will always see $\mathrm P$ as an $A$-module via the action on the left and simply write $z := z \otimes 1 \in \mathrm P$ when $z  \in A$.
By contrast, we set $\tilde z := 1 \otimes z$.
We will also write the morphism giving the right action as
\[
\xymatrix@R0cm{\Theta :& A \ar[r] & \mathrm P
\\ & z \ar@{|->}[r] & \tilde z. }
\]
We denote by $I \subset \mathrm P$ the kernel of multiplication on $A$ and consider the modules of \emph{twisted principal parts of infinite level} $\mathrm P^{(\infty)}_{(n)_\sigma} := \mathrm P/I^{(n+1)_{\sigma}}$ with
\[
I^{(n)_{\sigma}} = I\sigma(I)\cdots \sigma^{n-1}(I)
\]

There exists a unique morphism of twisted $R$-algebras
\begin{equation} \label{maplk}
\xymatrix@R0cm{  A[\xi] \ar[r] & \mathrm P
\\ \xi \ar@{|->}[r] & \tilde x - x.}
\end{equation}
We assumed above that $x$ is a \emph{twisted coordinate} on $A$: it means that the map \eqref{maplk} induces an isomorphism
\[
A[\xi]/\xi^{(n+1)} \simeq \mathrm P^{(\infty)}_{(n)_{\sigma}}
\]
for all $n \in \mathbb N$.
We may then see $A[\xi]$ as a subring of the ring $\widehat {\mathrm P}_{\sigma}^{(\infty)} := \varprojlim  \mathrm P^{(\infty)}_{(n)_{\sigma}}$ of \emph{twisted principal parts of infinite order}.
We might index all these objects with $A/R$ if we want to make clear the dependence on $A$ and $R$.

%%%%%%%%%%%%%%
\begin{dfn}
The $A$-module of \emph{twisted principal parts of order at most $n$ and level $0$} of $A$ is
\[
\mathrm P_{A/R,(n)_{\sigma}}^{(0)} := A\langle\xi\rangle/I^{[n+1]}
\]
And the $A$-module of \emph{twisted principal parts of infinite order and level $0$} of $A$ is
\[
\widehat {\mathrm P}_{A/R,\sigma}^{(0)} := \varprojlim \mathrm P_{A/R,(n)_{\sigma}}^{(0)} (= A\langle\langle\xi\rangle\rangle).
\]
\end{dfn}

In order to lighten the notations, we will sometimes drop the index $A/R$.

%%%%%%%%%
\begin{rmk}
\begin{enumerate}
\item
Unlike the infinite level analog, this notion depends on $q$ and $x$ and not only on $\sigma$.
\item
If we still denote by $X$ an indeterminate, then there exists a canonical $A$-linear isomorphism of rings
\[
A \otimes_{\mathbb Z[t,X]} \mathrm P_{\mathbb Z[t,X]/\mathbb Z[t],(n)_{\sigma}}^{(0)} \simeq \mathrm P_{A/R,(n)_{\sigma}}^{(0)}.
\]
\item
We might also have to consider the intermediate and completed ideals
\[
I^{[k]}_{A/R,(n)_{\sigma}} := I^{[k]}/I^{[n+1]} \quad \mathrm{and} \quad \widehat I^{[k]}_{A/R,\sigma} = \varprojlim I^{[k]}_{A/R,(n)_{\sigma}}.
\]
for $k \leq n+1$.
\item
By definition, $\mathrm P^{(0)}_{(n)_{\sigma}}$ is the finite free $A$-module on the images of the $\xi^{[i]}$ for $i \leq n$ and $I^{[k]}_{(n)_{\sigma}}$
is the free $A$-module on the images of the $\xi^{[i]}$ for $k \leq i \leq n$.
It follows that $\widehat {\mathrm P}_{A/R,\sigma}^{(0)}$ (resp. $\widehat I^{[k]}_{A/R}$) is the set of infinite sums $\sum z_{i} \xi^{[i]}$ with $z_{i} \in A$ and $i \in \mathbb N$ (resp. and $i \geq k$).
\item
Formula \eqref{sqform} shows that $I^{[n+1]}$ is an ideal inside $A\langle\xi\rangle$.
It follows that the quotients $\mathrm P_{A/R,(n)_{\sigma}}^{(0)}$ have a natural structure of $A$-algebra and so does $\widehat {\mathrm P}_{A/R,\sigma}^{(0)}$.
\end{enumerate}
\end{rmk}

%%%%%%%%%%%%%%
\begin{lem} \label{bijag}
The map \eqref{dppol} sends $\xi^{(m)}$ inside $I^{[n+1]}$ when $m > n$.
In particular, it induces a homomorphism
\[
A[\xi]/\xi^{(n+1)} \to \mathrm P^{(0)}_{(n)}.
\]
This is even an isomorphism if all $(m)_{q}$ are invertible in $R$ for $m \leq n$.
\end{lem}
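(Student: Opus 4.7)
My plan is to handle the three parts of the statement in sequence, using only Lemma \ref{bas}, Proposition \ref{dpring}, and the definition of $I^{[n+1]}$.

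For the inclusion claim I would unwind definitions: the map \eqref{dppol} sends $\xi^{(m)}$ to $(m)_{q}!\,\xi^{[m]}$, and by construction $I^{[n+1]}$ is the free $A$-submodule of $A\langle\xi\rangle$ generated by the $\xi^{[k]}$ with $k > n$; hence $\xi^{[m]}$, and any $A$-multiple of it, lies in $I^{[n+1]}$ as soon as $m > n$. For the induced factorization I would invoke Proposition \ref{dpring}: since \eqref{dppol} is a ring homomorphism and $I^{[n+1]}$ is an ideal of $A\langle\xi\rangle$, its preimage in $A[\xi]$ is an ideal; by the first step this preimage contains $\xi^{(n+1)}$, and hence the whole principal ideal $(\xi^{(n+1)})$. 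This yields the desired homomorphism $A[\xi]/\xi^{(n+1)} \to \mathrm P^{(0)}_{(n)_{\sigma}}$.

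For the isomorphism claim, under the hypothesis that $(m)_{q} \in R^{\times}$ for every positive $m \leq n$, I would compare $A$-bases on both sides. Because $\xi^{(n+1)}$ is monic of degree $n+1$, Euclidean division in $A[\xi]$ identifies the canonical composite $A[\xi]_{\leq n} \hookrightarrow A[\xi] \twoheadrightarrow A[\xi]/(\xi^{(n+1)})$ with an isomorphism of $A$-modules; combined with Lemma \ref{bas}, this shows that $\xi^{(0)}, \ldots, \xi^{(n)}$ descend to an $A$-basis of $A[\xi]/(\xi^{(n+1)})$. On the target side, the images of $\xi^{[0]}, \ldots, \xi^{[n]}$ form an $A$-basis of $\mathrm P^{(0)}_{(n)_{\sigma}} = A\langle\xi\rangle/I^{[n+1]}$ by construction. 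In these bases the induced map is diagonal with entries $(0)_{q}!, (1)_{q}!, \ldots, (n)_{q}!$, so it is bijective exactly when each such factorial is a unit in $R$; since $(0)_{q}! = 1$ and $(m)_{q}! = (m)_{q}\,(m-1)_{q}!$, this is equivalent to requiring $(m)_{q} \in R^{\times}$ for every $1 \leq m \leq n$.

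There is no genuine obstacle here: the statement essentially reorganizes Lemma \ref{bas} and Proposition \ref{dpring}. The one non-formal input is the monicity of $\xi^{(n+1)}$, which enables Euclidean division and lets the quotient by the principal ideal $(\xi^{(n+1)})$ be identified with the truncation $A[\xi]_{\leq n}$ as a free $A$-module.
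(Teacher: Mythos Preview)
Your proof is correct and follows the same approach as the paper, which simply writes ``Same arguments as for lemma~\ref{bijfond}''; you have supplied the details of that basis comparison (using monicity of $\xi^{(n+1)}$ for Euclidean division) explicitly. The only minor variation is that for the factorization step you invoke the ring-homomorphism property from Proposition~\ref{dpring} rather than arguing directly that the $A$-span of the $\xi^{(m)}$ with $m>n$ coincides with the principal ideal $(\xi^{(n+1)})$, but this is a cosmetic difference.
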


\begin{proof}
Same arguments as for lemma \ref{bijfond}.
\end{proof}

%%%%%%%%%%%%%%%%%%%%%%
\begin{prop} \label{compinf}
There are canonical homomorphisms
\[
\mathrm P^{(\infty)}_{A/R,(n)_{\sigma}} \to \mathrm P_{A/R,(n)_{\sigma}}^{(0)} \quad \mathrm{and} \quad \widehat {\mathrm P}^{(\infty)}_{A/R,\sigma} \to  \widehat {\mathrm P}_{A/R,\sigma}^{(0)}
\]
which are bijective when all $(m)_{q}$ are invertible (for $m \leq n$ in the first case).
\end{prop}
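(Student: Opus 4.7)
The plan is to build the first map by composing the twisted coordinate isomorphism with the map provided by Lemma \ref{bijag}, and then obtain the second map by passing to the inverse limit.

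More precisely, the twisted coordinate hypothesis yields an isomorphism of $A$-algebras
\[
\mathrm P^{(\infty)}_{A/R,(n)_{\sigma}} \simeq A[\xi]/\xi^{(n+1)},
\]
while Lemma \ref{bijag} produces a homomorphism
\[
A[\xi]/\xi^{(n+1)} \to A\langle\xi\rangle/I^{[n+1]} = \mathrm P^{(0)}_{A/R,(n)_{\sigma}}
\]
induced by the $A$-linear map $\xi^{(n)} \mapsto (n)_{q}!\, \xi^{[n]}$ from Proposition \ref{bijfond}. First I would note that, although Lemma \ref{bijag} only asserts that this is a \emph{homomorphism} (of $A$-modules), Proposition \ref{dpring} upgrades the map of Proposition \ref{bijfond} to a morphism of $A$-algebras, and this algebra structure descends to the quotients because $\xi^{(n+1)}$ lies in $I^{[n+1]}$ (Lemma \ref{bijag}). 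The composition is thus the desired canonical $A$-algebra homomorphism
\[
\mathrm P^{(\infty)}_{A/R,(n)_{\sigma}} \to \mathrm P^{(0)}_{A/R,(n)_{\sigma}}.
\]

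For the bijectivity statement, I would simply invoke Lemma \ref{bijag}: once every $(m)_{q}$ with $m \leq n$ is invertible in $R$, the map $A[\xi]/\xi^{(n+1)} \to \mathrm P^{(0)}_{A/R,(n)_{\sigma}}$ is an isomorphism, and so is its composition with the twisted coordinate isomorphism.

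For the completed version, I would take the inverse limit over $n$ of the system of maps just constructed. Compatibility with the transition maps on both sides is automatic from the construction, since both quotients are formed by killing the respective ideals and the map is induced from the fixed map $A[\xi] \to A\langle\xi\rangle$. Passing to the inverse limit yields the canonical map
\[
\widehat{\mathrm P}^{(\infty)}_{A/R,\sigma} \to \widehat{\mathrm P}^{(0)}_{A/R,\sigma},
\]
and when every positive $q$-integer $(m)_{q}$ is invertible in $R$, each finite-level map is bijective, so the limit map is bijective as well. There is no real obstacle here; the whole argument is an application of Lemma \ref{bijag} and Proposition \ref{dpring}, and the only point requiring any attention is making sure one records that the map is one of $A$-algebras, not merely $A$-modules.
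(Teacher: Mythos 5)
Your proof is correct and follows essentially the same route as the paper, whose own proof simply invokes Proposition \ref{bijfond} and Lemma \ref{bijag}; you merely spell out the composition with the twisted coordinate isomorphism, the algebra-structure remark via Proposition \ref{dpring}, and the passage to the inverse limit, all of which are exactly what the paper's argument implicitly relies on.
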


\begin{proof}
Follows from lemmas \ref{bijfond} and \ref{bijag}.
\end{proof}

When this last condition is satisfied, we might identify both rings and drop the superscript, writing simply $\mathrm P_{A/R,(n)_{\sigma}}$ or $\widehat {\mathrm P}_{A/R,\sigma}$.
%%%%%%%%%%%%%%%
\begin{dfn}
The \emph{twisted Taylor map of level zero} is the composite homomorphism of $R$-algebras
\[
\xymatrix{ A \ar[drr]_{\widehat \Theta^{(0)}} \ar[r]^{\Theta} \ar@/^2pc/[rr]^{\widehat \Theta^{(\infty)}} & \mathrm P_{A/R} \ar[r] & \widehat {\mathrm P}^{(\infty)}_{A/R,\sigma} \ar[d] 
\\ && \widehat {\mathrm P}_{A/R,\sigma}^{(0)}&  .
}
\]
\end{dfn}

Also, we will denote by
\[
\Theta_{n}^{(0)} : A  \to \mathrm P^{(0)}_{A/R,(n)_{\sigma}}
\]
the composition of the twisted Taylor map and the projection.
When there is no risk of confusion, we might simply write $\Theta$ for any of these Taylor maps.

We can give an explicit expression for the twisted Taylor map as we shall see shortly.
First of all, since $x$ is a twisted coordinate on $A$, we know from proposition 2.10 of \cite{LeStumQuiros18} that there exists a unique $R$-linear endomorphism $\partial_{\sigma,A}$ of $A$ such that
\[
\forall z_{1}, z_{2} \in A, \quad \partial_{\sigma,A}(z_{1}z_{2}) = z_{1}\partial_{\sigma,A}(z_{2}) + \sigma(z_{2}) \partial_{\sigma,A}(z_{1})
\]
(a \emph{$\sigma$-derivation}) and $\partial_{\sigma,A}(x) = 1$.
We will often simply write $\partial_{\sigma}$, but this endomorphism should not be confused with the abstract generator of the twisted Weyl algebra that we will denote later in the same way.

%%%%%%%%%%%%%
\begin{prop}
We have
\[
\forall z \in A, \quad \widehat\Theta^{(0)}(z) = \sum_{k=0}^\infty \partial_{\sigma}^k(z) \xi^{[k]}.
\]
\end{prop}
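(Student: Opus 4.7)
The plan is to derive this Taylor formula from its analog at infinite level, proven in \cite{LeStumQuiros18}, by transporting it through the change-of-basis identification of Proposition \ref{bijfond}.

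I would first reduce to a universal setting in which the $q$-factorials are invertible. By the base change remark following Definition \ref{dfdp} (and its counterpart for twisted principal parts in Section 4), the ring $\mathrm P^{(0)}_{A/R,(n)_\sigma}$ together with its divided-power basis $\{\xi^{[k]}\}$ is obtained by base change from a universal model over $\mathbb Z[t]$; the iterated twisted derivations $\partial_\sigma^k$ are similarly functorial in the data $(A,R,\sigma,x)$, being expressible by universal polynomial formulas in $q$, $h$ and the coefficients of $z$. Both sides of the claimed equality therefore descend from the universal case to the general one, so it suffices to verify the formula after base change to a ring extension, e.g.\ $\mathbb Q(t)$, where $R$ becomes $q$-divisible of $q$-characteristic zero.

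In this base-changed setting, Proposition \ref{compinf} yields an isomorphism $\widehat{\mathrm P}^{(\infty)} \simeq \widehat{\mathrm P}^{(0)}$, while Proposition \ref{bijfond} identifies $\xi^{(k)}$ with $(k)_q!\, \xi^{[k]}$. The infinite-level twisted Taylor formula of \cite{LeStumQuiros18} then reads
\[
\widehat\Theta^{(\infty)}(z) \;=\; \sum_{k \geq 0} \frac{\partial_\sigma^k(z)}{(k)_q!}\, \xi^{(k)},
\]
and substituting $\xi^{(k)} = (k)_q!\, \xi^{[k]}$ immediately produces the desired $\widehat\Theta^{(0)}(z) = \sum_k \partial_\sigma^k(z)\, \xi^{[k]}$.

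The main obstacle is making the descent step precise: the individual quotients $\partial_\sigma^k(z)/(k)_q!$ appearing in the universal computation do not survive intact over a general base, but their product with $\xi^{(k)} = (k)_q!\, \xi^{[k]}$ yields the cleared expression $\partial_\sigma^k(z)\, \xi^{[k]}$, which is well-defined universally and transports without ambiguity. Once this cleared form of the formula is matched in the universal case, the functoriality of every ingredient ($\widehat\Theta^{(0)}$, $\partial_\sigma$, and the $\xi^{[k]}$-basis) propagates it to arbitrary $(A,R,\sigma,x)$.
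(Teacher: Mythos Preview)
Your overall strategy---pass through the infinite-level Taylor expansion and use the identification $\xi^{(k)}\mapsto (k)_q!\,\xi^{[k]}$---is exactly the paper's. The difference lies in how you justify the key identity $(k)_q!\,\partial_\sigma^{[k]}(z)=\partial_\sigma^k(z)$: the paper simply cites it (Corollary~6.2 of \cite{LeStumQuiros18}), whereas you attempt to derive it by reduction to a universal base. That reduction, as you have written it, does not go through.

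The problem is that the statement to be proved is an identity involving an \emph{arbitrary} element $z$ of an \emph{arbitrary} twisted $R$-algebra $A$ with twisted coordinate $x$; there is no universal model for such data. The reductions used elsewhere in the paper (to $\mathbb Z[t][Y]$ or $\mathbb Q(t)$) check identities in $A\langle\xi\rangle$ that are purely formal in $q$, $y$, and $\xi$---they never involve a general $z\in A$. Your phrase ``the coefficients of $z$'' presupposes $A$ is something like a polynomial ring, which it need not be. And even if one tried to base-change along $\mathbb Z[t]\to\mathbb Q(t)$, the induced map $A\to A\otimes_{\mathbb Z[t]}\mathbb Q(t)$ is not injective in general; in the main case of interest ($R$ of positive $q$-characteristic) the target ring is zero, so nothing can be deduced. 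Your final paragraph recognizes a difficulty but does not resolve it: saying the cleared expression $\partial_\sigma^k(z)\,\xi^{[k]}$ is ``well-defined universally'' is true but vacuous---the question is whether the \emph{equality} holds, and your argument only establishes it after a possibly non-injective base change.

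The clean fix is to do what the paper does: invoke the identity $\partial_\sigma^k=(k)_q!\,\partial_\sigma^{[k]}$ from \cite{LeStumQuiros18}, which is proved there over an arbitrary base (for any twisted algebra with twisted coordinate), and then your change-of-basis computation goes through without any reduction.
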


\begin{proof}
Recall from proposition 5.5 of \cite{LeStumQuiros18} that there exists a family of endomorphisms $\partial_{\sigma}^{[k]}$ of $A$ such that
\[
\forall z \in A, \quad \widehat\Theta^{(\infty)} (z) = \sum_{k=0}^\infty \partial_{\sigma}^{[k]}(z) \xi^{k}.
\]
The proposition then follows from corollary 6.2 of \cite{LeStumQuiros18} where we showed that
\[\forall k \in \mathbb N, \forall z \in A, \quad \partial_{\sigma}^k(z) = (k)_{q}! \partial_{\sigma}^{[k]}(z).\qedhere
\]\end{proof}

%%%%%%%%%%%%%%%%%%%%%%%%%
\begin{xmp}
\begin{enumerate}
\item We always have $\Theta(x) = x + \xi$.
\item We have
\[
\Theta(x^2) = x^2 + ((1+q)x+h)) \xi + (1+q)\xi^{[2]}.
\]
\item If $\sigma(x) = qx$ with $q \in R^\times$ and $x \in A^\times$, one can show that
\[
\Theta\left(\frac 1x\right) = \sum_{k=0}^{\infty} (-1)^k \frac  {(k)_{q}!\xi^{[k]}}{ q^{\frac {k(k+1)}2} x^{k+1}} = \frac 1x - \frac \xi{x^2} + \frac {(1+q)\xi^{[2]}}{q^3x^3} - \cdots.
\]
\end{enumerate}
\end{xmp}

We will denote by
\[
A^{\partial_{\sigma}=0} := \mathrm H^0_{\partial_{\sigma}}(A) = \{z \in A, \quad \partial_{\sigma}(z) = 0\}
\]
the subalgebra of \emph{horizontal sections} of $A$.

%%%%%%%%%%%%%%
\begin{prop} \label{leseq}
There exists a left exact sequence
\[
\xymatrix{A^{\partial_{\sigma}=0} \ar[r] & A \ar@<2pt>[rr]^-{\mathrm{can}} \ar@<-2pt>[rr]_-{\Theta} && \widehat {\mathrm P}_{\sigma}^{(0)}.}
\]
\end{prop}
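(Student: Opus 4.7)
The plan is to reduce the statement to a straightforward comparison of coefficients using the explicit formula
\[
\widehat \Theta^{(0)}(z) = \sum_{k=0}^\infty \partial_{\sigma}^k(z)\, \xi^{[k]}
\]
just established. Since $\mathrm{can}(z) = z = z \cdot \xi^{[0]}$, we have
\[
\widehat \Theta^{(0)}(z) - \mathrm{can}(z) = \sum_{k=1}^\infty \partial_{\sigma}^k(z)\, \xi^{[k]}.
\]
The key point is the structural remark made just after the definition of $\widehat {\mathrm P}^{(0)}_{A/R,\sigma}$: it is the module of formal infinite sums $\sum z_i \xi^{[i]}$ with $z_i \in A$, i.e.\ the $\xi^{[k]}$'s form a topological basis. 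In particular, an element of $\widehat {\mathrm P}^{(0)}_{A/R,\sigma}$ vanishes if and only if all its $\xi^{[k]}$-coefficients vanish in $A$.

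First I would check exactness at $A$. If $z \in A$ satisfies $\Theta(z) = \mathrm{can}(z)$, the vanishing of the coefficient of $\xi^{[k]}$ for every $k \geq 1$ forces in particular $\partial_\sigma(z) = 0$, so $z \in A^{\partial_\sigma = 0}$. Conversely, if $\partial_\sigma(z) = 0$ then by iteration $\partial_\sigma^k(z) = 0$ for every $k \geq 1$, so all higher terms in the Taylor expansion disappear and $\Theta(z) = z = \mathrm{can}(z)$. This simultaneously shows injectivity of $A^{\partial_\sigma=0} \hookrightarrow A$ (trivial, it is just the inclusion of a subset) and the agreement of the two maps after restriction to $A^{\partial_\sigma=0}$.

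There is essentially no obstacle: once the explicit formula for $\widehat\Theta^{(0)}$ is granted, left exactness is immediate from the freeness statement for $\widehat {\mathrm P}^{(0)}_{A/R,\sigma}$. The only mild subtlety worth spelling out is that this uses the level-zero Taylor map (not the infinite-level one), because one needs $\partial_\sigma^k$ rather than $\partial_\sigma^{[k]}$ to read off the vanishing of $\partial_\sigma$ directly; but this is exactly the content of the formula invoked above, combined with $(k)_q! \, \partial_\sigma^{[k]} = \partial_\sigma^k$ from \cite{LeStumQuiros18}.
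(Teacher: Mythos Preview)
Your proof is correct and follows essentially the same approach as the paper: both use the explicit Taylor formula $\Theta(z) = \sum_{k \geq 0} \partial_\sigma^k(z)\,\xi^{[k]}$ together with the freeness of $\widehat{\mathrm P}^{(0)}_\sigma$ on the $\xi^{[k]}$'s to conclude that $\Theta(z) = z$ if and only if $\partial_\sigma(z) = 0$. The paper simply states this equivalence in one line, whereas you spell out the coefficient comparison and the (trivial) exactness at $A^{\partial_\sigma=0}$ in more detail.
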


\begin{proof}
We have $\Theta(z) = \sum \partial_{\sigma}^k(z) \xi^{[k]}$ and it follows that $\Theta(z) = z$ if and only if $\partial_{\sigma}(z) = 0$. \end{proof}

%%%%%%%%%%%%%%%%%%
\begin{prop} \label{epimono}
There exists an epi-mono factorization
\[
\mathrm P_{A/R} \twoheadrightarrow A \otimes_{A^{\partial_{\sigma}= 0}} A \hookrightarrow  \widehat {\mathrm P}_{A/R}^{(0)} .
\]
When $R$ is $q$-divisible of $q$-characteristic $p > 0$, there exists another epi-mono factorization
\[
\widehat {\mathrm P}^{(\infty)}_{A/R,\sigma} \twoheadrightarrow \mathrm P^{(\infty)}_{A/R,(p-1)_{\sigma}} \hookrightarrow \widehat {\mathrm P}^{(0)}_{A/R,\sigma}.
\]
\end{prop}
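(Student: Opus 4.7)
For the first factorization I construct the natural map $\mu \colon \mathrm P_{A/R} = A \otimes_R A \to \widehat{\mathrm P}^{(0)}_{A/R}$ by $z_1 \otimes z_2 \mapsto z_1 \widehat\Theta^{(0)}(z_2)$. Since $\widehat\Theta^{(0)}$ is a ring homomorphism, and since by Proposition \ref{leseq} it coincides with the canonical left unit map on $A^{\partial_\sigma=0}$ (because $\widehat\Theta^{(0)}(c) = \sum_k \partial_\sigma^k(c)\xi^{[k]}$ reduces to $c$ whenever $\partial_\sigma(c) = 0$), the associated bilinear map is balanced over $A^{\partial_\sigma=0}$. Thus $\mu$ factors through the canonical quotient $\mathrm P_{A/R} \twoheadrightarrow A \otimes_{A^{\partial_\sigma=0}} A$, which supplies the required epi, and yields an $A$-linear map $A \otimes_{A^{\partial_\sigma=0}} A \to \widehat{\mathrm P}^{(0)}_{A/R}$.

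For the injectivity of the second arrow I would exploit the twisted coordinate $x$ to identify $\mathrm P_{A/R} \simeq A[\xi]$ via $\xi = \tilde x - x$ and $\widehat{\mathrm P}^{(0)}_{A/R} = A\langle\langle\xi\rangle\rangle$; under these identifications $\mu$ becomes the $A$-linear map $\xi^{(n)} \mapsto (n)_q!\,\xi^{[n]}$, whose kernel consists of finite sums $\sum a_n \xi^{(n)}$ with $a_n(n)_q! = 0$ for all $n$. The task is to identify this kernel with the ideal generated by the differences $\tilde c - c$ for $c \in A^{\partial_\sigma=0}$. One inclusion follows from the preceding paragraph, since each such $\tilde c - c$ maps to $\widehat\Theta^{(0)}(c) - c = 0$. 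The reverse inclusion is the main obstacle and is the delicate point: one must show that every $(n)_q!$-torsion relation in $A[\xi]$ is already witnessed by horizontal elements, via their twisted Taylor expansion $\tilde c = \sum_k \partial_\sigma^{[k]}(c)\xi^k$.

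The second factorization is cleaner. By Proposition \ref{compinf}, the canonical map $\widehat{\mathrm P}^{(\infty)}_{A/R,\sigma} \to \widehat{\mathrm P}^{(0)}_{A/R,\sigma}$ sends $\xi^{(n)}$ to $(n)_q!\,\xi^{[n]}$. The hypothesis $q\textrm{-char}(R) = p > 0$ forces $(n)_q! = 0$ for every $n \geq p$; in particular $\xi^{(p)} \mapsto 0$, so the ideal $I^{(p)_\sigma}$ (generated by $\xi^{(p)}$ in $\mathrm P = A[\xi]$) is annihilated, and the map factors through the projection $\widehat{\mathrm P}^{(\infty)}_{A/R,\sigma} \twoheadrightarrow \mathrm P^{(\infty)}_{A/R,(p-1)_\sigma} \simeq A[\xi]/\xi^{(p)}$, giving the surjection. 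For the resulting map $\mathrm P^{(\infty)}_{A/R,(p-1)_\sigma} \hookrightarrow \widehat{\mathrm P}^{(0)}_{A/R,\sigma}$, the $A$-basis $\xi^{(0)}, \dots, \xi^{(p-1)}$ of the source is sent to $\xi^{[0]}, (1)_q!\,\xi^{[1]}, \dots, (p-1)_q!\,\xi^{[p-1]}$; since $R$ is $q$-divisible, each $(n)_q!$ with $0 \leq n < p$ is a unit of $R$, so these images are part of the $A$-basis $\{\xi^{[n]}\}$ of $A\langle\langle\xi\rangle\rangle$ and in particular are $A$-linearly independent, giving the injectivity.
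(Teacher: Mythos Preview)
Your treatment of the second factorization is correct and essentially identical to the paper's: both use that $\xi^{(p)}\mapsto (p)_q!\,\xi^{[p]}=0$ to produce the surjection, and $q$-divisibility to see that $\xi^{(0)},\dots,\xi^{(p-1)}$ map to $A$-linearly independent elements.

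For the first factorization, however, there is a genuine gap in your approach. The identification $\mathrm P_{A/R}\simeq A[\xi]$ is not valid: the twisted-coordinate hypothesis only furnishes isomorphisms $A[\xi]/\xi^{(n+1)}\simeq \mathrm P^{(\infty)}_{(n)_\sigma}$ at the level of quotients, so that $A[\xi]$ sits inside $\widehat{\mathrm P}^{(\infty)}_\sigma$, but the natural map $A[\xi]\to \mathrm P_{A/R}=A\otimes_R A$ is in general neither injective nor surjective (it is an isomorphism only when $A$ is literally $R[x]$). Your kernel description in terms of $(n)_q!$-torsion therefore analyses the wrong object. And, as you yourself flag, even on that object you leave the reverse inclusion unresolved.

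The paper avoids this detour entirely. It first observes that the kernel of $\mathrm P\to\widehat{\mathrm P}^{(0)}$ lies inside the diagonal ideal $I$, since composing with the projection $\widehat{\mathrm P}^{(0)}\to\mathrm P^{(0)}_{(0)_\sigma}=A$ recovers the multiplication map. It then works with the left-$A$-module generators $\tilde z - z$ of $I$ and computes their images directly via the Taylor formula
\[
\widehat\Theta^{(0)}(z)-z=\sum_{k\geq 1}\partial_\sigma^k(z)\,\xi^{[k]},
\]
which vanishes precisely when $\partial_\sigma(z)=0$. From this the paper concludes that the kernel is the ideal generated by those $\tilde z-z$ with $z\in A^{\partial_\sigma=0}$, i.e.\ exactly the kernel of $A\otimes_R A\to A\otimes_{A^{\partial_\sigma=0}}A$. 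The key move you are missing is to work inside $I\subset A\otimes_R A$ with its generators $\tilde z-z$ rather than trying to pass through $A[\xi]$.
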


As a consequence, when $R$ is $q$-divisible of $q$-characteristic $p > 0$, we obtain an inclusion
\[
A \otimes_{A^{\partial_{\sigma}= 0}} A \hookrightarrow \mathrm P^{(\infty)}_{A/R,(p-1)_{\sigma}} \simeq \mathrm P^{(0)}_{A/R,(p-1)_{\sigma}}
\]

\begin{proof}
If an element of $\mathrm P$ is sent to $0 \in \widehat {\mathrm P}^{(0)}$, then it is also sent to $0$ in $\mathrm P^{(0)}_{(0)_{\sigma}} = A$ and it therefore belongs to $I$.
Now an element of the form $\tilde z - z \in I$ is sent to
\[
\widehat \Theta^{(0)}(z) -z = \sum_{k=1}^\infty \partial_{\sigma}^{k}(z) \xi^{[k]} \in \widehat {\mathrm P}^{(0)}
\]
and this is equal to $0$ if and only if $\partial_{\sigma}(z) = 0$.
Thus we see that the kernel of $\mathrm P \to \widehat {\mathrm P}^{(0)}$ is the ideal $J$ generated by the $\tilde z - z$ with $z \in A^{\partial_{\sigma} = 0}$ and we have $\mathrm P/J = A \otimes_{A^{\partial_{\sigma}= 0}} A$.

When $q\mathrm{-char}(R) = p > 0$, the image of $\xi^{(p)}$ in $A\langle \xi \rangle$ is $(p)_{q}!\xi^{[p]} = 0$.
Therefore, there exists an epi-mono factorization
\[
A[\xi] \twoheadrightarrow A[\xi]/\xi^{(p)} \to A \langle \xi \rangle
\]
inducing an isomorphism of $A$-modules
\[
A[\xi]/\xi^{(p)} \simeq A \langle \xi \rangle/I^{[p]}
\]
because $R$ is $q$-divisible.
The second assertion follows immediately. \end{proof}

\begin{rmk}
If $R$ were not $q$-divisible (but still $q\mathrm{-char}(R) = p > 0$), we would still get a decomposition
\[
\widehat {\mathrm P}^{(\infty)}_{A/R,\sigma} \twoheadrightarrow \mathrm P^{(\infty)}_{A/R,(p-1)_{\sigma}} \to \widehat {\mathrm P}^{(0)}_{A/R,\sigma} \twoheadrightarrow \mathrm P^{(0)}_{A/R,(p-1)_{\sigma}} 
\]
and an inclusion $A \otimes_{A^{\partial_{\sigma}= 0}} A \hookrightarrow \mathrm P^{(0)}_{A/R,(p-1)_{\sigma}}$.
\end{rmk}

%%%%%%%%%%%%%%%%%%%%%%%%%
%%%%%%%%%%%%%%%%%%%%%%%%%%%%
\section{Twisted differential operators of level zero}

We assume again that $A$ is a twisted commutative algebra with twisted coordinate $x$ and we set $y := x -\sigma(x)$.

If $M$ is an $A$-module, when we write $\mathrm P_{(n)_{\sigma}}^{(0)} \otimes'_{A} M$, we mean that we endow $\mathrm P_{(n)_{\sigma}}^{(0)}$ with the action given by the \emph{twisted Taylor map}.
In other words, we have
\[
\forall z \in A, s \in M, \quad \xi^{[k]} \otimes' zs = \Theta(z)\xi^{[k]} \otimes' s.
\]
In particular, on $\mathrm P_{(n)_{\sigma}}^{(0)} \otimes'_{A} \mathrm P_{(m)_{\sigma}}^{(0)}$, we use the natural action of $A$ for the left structure (action on the right) and the twisted Taylor map for the right structure (action on the left).
Also, it will be convenient to set
\[
\widehat {\mathrm P}_{\sigma}^{(0)} \widehat \otimes'_{A} \widehat {\mathrm P}_{\sigma}^{(0)} := \varprojlim \left(\mathrm P_{(n)_{\sigma}}^{(0)} \otimes'_{A} \mathrm P_{(m)_{\sigma}}^{(0)}\right).
\]
This is the set of infinite sums $\sum_{i,j \in \mathbb N} z_{i,j} \xi^{[i]} \otimes' \xi^{[j]}$ with $z_{i,j} \in A$ (with Taylor switch on coefficients).
%%%%%%%%%%%%%%%%%%%%%%
\begin{dfn}
If $M$ and $N$ are two $A$-modules, a \emph{twisted differential operator of level $0$} of from $M$ to $N$ is an $A$-linear map
\[
\mathrm P_{(n)_{\sigma}}^{(0)} \otimes'_{A} M \to N.
\]
\end{dfn}

In general, we will write
\[
\mathrm{Diff}_{n,\sigma}^{(0)}(M, N) = \mathrm{Hom} _{A} (\mathrm P_{(n)_{\sigma}}^{(0)} \otimes'_{A} M, N)
\]
and
\[
\mathrm{Diff}_{\sigma}^{(0)}(M, N) = \varinjlim \mathrm{Diff}_{n}^{(0)}(M, N).
\]
In the case $N = M$, we will simply write $\mathrm{Diff}_{n,\sigma}^{(0)}(M)$ and $\mathrm{Diff}_{\sigma}^{(0)}(M)$.
Moreover, we set $\mathrm D_{A/R,\sigma}^{(0)} := \mathrm{Diff}_{\sigma}^{(0)}(A)$.

%%%%%%%%%%%%%%%%%%%%
\begin{dfn} \label{commu}
The \emph{comultiplication on $\mathrm P_{\sigma}^{(0)}$} is the $A$-linear map
\[
\xymatrix@R0cm{\widehat \delta^{(0)} : & \widehat {\mathrm P}_{\sigma}^{(0)} \ar[r] & \widehat {\mathrm P}_{\sigma}^{(0)} \widehat \otimes'_{A} \widehat {\mathrm P}_{\sigma}^{(0)}
\\ & \xi^{[n]} \ar@{|->}[r] & \sum_{i=0}^n \xi^{[n-i]} \otimes' \xi^{[i]} }
\]
\end{dfn}

We might also have to consider the partial comultiplication maps
\[
\delta_{m,n}^{(0)} : \mathrm P_{(m+n)_{\sigma}}^{(0)} \to \mathrm P_{(n)_{\sigma}}^{(0)}  \otimes'_{A} \mathrm P_{(m)_{\sigma}}^{(0)}.
\]
which are given by the same formulas.
In practice, we should simply denote all these maps by $\delta$.

%%%%%%%%%%%%%%%%%%%%%%
\begin{prop} \label{chanr}
There exists a commutative diagram
\[
\xymatrix{ A \otimes_{R} A \ar@{=}[r] \ar[d]^{\delta}& \mathrm P \ar[r] \ar[d]^{\delta} & \widehat {\mathrm P}^{(\infty)}_{\sigma}\ar[d]^{\widehat \delta^{(\infty)}} \ar[r] & \widehat {\mathrm P}_{\sigma}^{(0)} \ar[d]^{\widehat\delta^{(0)}}
\\ A \otimes_{R} A \otimes_{R} A \ar@{=}[r]& \mathrm P \otimes_{A}' \mathrm P \ar[r] & \widehat {\mathrm P}^{(\infty)}_{\sigma} \otimes_{A}' \mathrm P^{(\infty)}_{\sigma} \ar[r] & \widehat {\mathrm P}_{\sigma}^{(0)} \otimes'_{A} \widehat {\mathrm P}_{\sigma}^{(0)}
}
\]
where the first horizontal map sends $z_{1} \otimes z_{2}$ to $z_{1} \otimes 1 \otimes z_{2}$.
\end{prop}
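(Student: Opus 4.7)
The plan is to verify the three squares from left to right. The leftmost square is essentially tautological: under the canonical identification $\mathrm P \otimes'_A \mathrm P \cong A \otimes_R A \otimes_R A$ given by $(z_1 \otimes z_2) \otimes' (z_3 \otimes z_4) \mapsto z_1 \otimes z_2 z_3 \otimes z_4$, the comultiplication $\delta$ on $\mathrm P$ is precisely the ring homomorphism $z_1 \otimes z_2 \mapsto z_1 \otimes 1 \otimes z_2$ displayed in the statement. The middle square commutes by the construction of $\widehat{\delta}^{(\infty)}$ recalled from \cite{LeStumQuiros18}, which is defined as the descent of $\delta$ through the quotient by the twisted powers of the ideal $I$.

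The rightmost square is the substantive part. Both vertical maps are $A$-linear and continuous, and the top horizontal map sends $\xi^{(n)}$ to $(n)_{q}!\,\xi^{[n]}$, so it suffices to verify, for each $n \in \mathbb N$, that the image of $\widehat{\delta}^{(\infty)}(\xi^{(n)})$ in $\widehat{\mathrm P}^{(0)}_{\sigma} \otimes'_A \widehat{\mathrm P}^{(0)}_{\sigma}$ equals $(n)_{q}!\,\widehat{\delta}^{(0)}(\xi^{[n]}) = (n)_{q}! \sum_{i+j=n} \xi^{[i]} \otimes' \xi^{[j]}$. The key claim is the identity
\[
\widehat{\delta}^{(\infty)}(\xi^{(n)}) = \sum_{i+j=n} {n \choose i}_{q} \xi^{(i)} \otimes' \xi^{(j)},
\]
whose projection does yield the required formula because ${n \choose i}_{q}(i)_{q}!(j)_{q}! = (n)_{q}!$.

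To prove this identity I would proceed by induction on $n$, starting from the easy case $\delta(\xi) = \xi \otimes' 1 + 1 \otimes' \xi$, obtained by computing $\delta(1 \otimes x - x \otimes 1)$ in the triple tensor product. For the inductive step, expand $\delta(\xi^{(n+1)}) = \widehat{\delta}^{(\infty)}(\xi^{(n)}) \cdot \delta(\xi + (n)_{q} y)$, using $\delta(y) = y \otimes' 1$. The crucial observation is that in $\mathrm P$ one has the relation $\tilde y = y + (1-q)\xi$ (since $y = (1-q)x - h$), so that moving a factor of $y$ across the tensor through $\xi^{(i)} \otimes' y\,\xi^{(j)} = \xi^{(i)} \tilde y \otimes' \xi^{(j)}$ introduces corrective terms involving $(1-q)\xi$. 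Combining these with the identity $(n)_{q} - (i)_{q} = q^i (j)_{q}$ valid for $i+j=n$, the $y$-coefficients telescope and one is left with $\sum \bigl[{n \choose i-1}_{q} q^{n+1-i} + {n \choose i}_{q}\bigr]\xi^{(i)} \otimes' \xi^{(j)}$, which is the desired expansion by the $q$-Pascal rule.

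The main obstacle is this bookkeeping of $y$ versus $\tilde y$ across $\otimes'$; once the identity $\tilde y - y = (1-q)\xi$ in $\mathrm P$ is in hand, the rearrangement is mechanical, and the rest of the argument is formal.
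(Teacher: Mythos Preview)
Your argument is correct. The leftmost and middle squares are indeed handled by unwinding definitions and by the construction of $\widehat\delta^{(\infty)}$ in \cite{LeStumQuiros18}, and your inductive verification of
\[
\widehat\delta^{(\infty)}(\xi^{(n)}) = \sum_{i+j=n} {n \choose i}_{q}\,\xi^{(i)} \otimes' \xi^{(j)}
\]
goes through: the identity $\tilde y = y + (1-q)\xi$ in $\mathrm P$, together with $(n)_q - (i)_q = q^i(j)_q$, makes the $y$-terms cancel and leaves exactly $q^{j}\xi^{(i+1)}\otimes'\xi^{(j)} + \xi^{(i)}\otimes'\xi^{(j+1)}$, whence the $q$-Pascal rule finishes the step. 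Pushing this forward via $\xi^{(k)}\mapsto (k)_q!\,\xi^{[k]}$ on each tensor factor gives the required compatibility with $\widehat\delta^{(0)}$.

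The paper, by contrast, does not carry out any of this: its entire proof is a one-line appeal to Theorem~3.5 of \cite{LeStumQuiros18}. That theorem already supplies the twisted-binomial expansion of $\widehat\delta^{(\infty)}(\xi^{(n)})$ (and the compatibility with the comultiplication on $\mathrm P$), so what you have done for the rightmost square is essentially an independent reproof of the relevant portion of that external result. Your route has the merit of being self-contained within the present paper's framework and of isolating precisely where the twist $\tilde y - y = (1-q)\xi$ enters; the paper's route is shorter but opaque unless one has the cited reference at hand.
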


\begin{proof}
Follows from theorem theorem 3.5 of \cite{LeStumQuiros18}.
\end{proof}

%%%%%%%%%%%
\begin{prop}
The comultiplication map $\widehat \delta^{(0)}$ is a homomorphism of rings.
\end{prop}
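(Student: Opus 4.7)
The plan is to reduce to a ``generic'' situation where all positive $q$-integers are invertible, and then transfer the desired multiplicativity from the infinite-level comultiplication $\widehat\delta^{(\infty)}$, which is already known to be a ring homomorphism.

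Both sides of the sought equality
\[
\widehat\delta^{(0)}(\xi^{[m]}\xi^{[n]}) = \widehat\delta^{(0)}(\xi^{[m]})\,\widehat\delta^{(0)}(\xi^{[n]})
\]
can be expanded as $A$-linear combinations $\sum_{i,j} c_{i,j}\,\xi^{[i]}\otimes' \xi^{[j]}$ in $\widehat {\mathrm P}_{\sigma}^{(0)} \widehat \otimes'_{A} \widehat {\mathrm P}_{\sigma}^{(0)}$, and each coefficient $c_{i,j}$ is a universal polynomial expression in $q$, $y$, and (for the right-hand side, via the Taylor switch) in $x$ and the action of $\partial_\sigma$. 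This is transparent from the multiplication formula of Proposition~\ref{dpring}, the definition of $\widehat\delta^{(0)}$, and the explicit shape of the Taylor map. It therefore suffices to establish the identity in the universal setting $R = \mathbb Z[t][X]$, $A = R$, $q = t$, $x = X$ (so $y = (1-t)X$), and by flatness we may replace $R$ by $\mathbb Q(t)[X]$. In this flat generic ring, every positive $q$-integer is invertible.

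In this generic case, Proposition~\ref{compinf} asserts that the canonical map $\widehat{\mathrm P}^{(\infty)}_{\sigma} \to \widehat {\mathrm P}_{\sigma}^{(0)}$ is an isomorphism of $A$-algebras, and an analogous statement holds for the tensor product $\widehat{\mathrm P}^{(\infty)}_{\sigma} \otimes'_A \widehat{\mathrm P}^{(\infty)}_{\sigma} \to \widehat{\mathrm P}_{\sigma}^{(0)} \otimes'_A \widehat{\mathrm P}_{\sigma}^{(0)}$. By the commutative diagram of Proposition~\ref{chanr}, these isomorphisms identify $\widehat\delta^{(0)}$ with $\widehat\delta^{(\infty)}$. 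But $\widehat\delta^{(\infty)}$ is a ring homomorphism since it is induced from the $R$-algebra map $A \otimes_R A \to A \otimes_R A \otimes_R A$, $z_1 \otimes z_2 \mapsto z_1 \otimes 1 \otimes z_2$, which is obviously multiplicative. Hence the identity holds in the generic case, and by the universal reduction it holds in general.

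The main obstacle is the bookkeeping involved in justifying that the ``polynomial'' reduction really applies to the Taylor-twisted tensor product $\widehat{\mathrm P}_{\sigma}^{(0)} \widehat \otimes'_{A} \widehat {\mathrm P}_{\sigma}^{(0)}$: one must check that the structure map $\Theta$, and hence the $\otimes'$-relations, specialize correctly from the universal ring $\mathbb Z[t][X]$ to $A$. This is a formal verification, entirely analogous to the base-change argument already used in the proofs of Proposition~\ref{dpring} and Theorem~\ref{dualpc}; no new idea is needed beyond recording that $\partial_\sigma$ itself is defined universally (as the unique $\sigma$-derivation sending $x$ to $1$).
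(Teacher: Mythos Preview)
Your strategy is exactly the paper's: reduce to a universal base where all positive $q$-integers are invertible, identify $\widehat{\mathrm P}^{(0)}_\sigma$ with $\widehat{\mathrm P}^{(\infty)}_\sigma$ via Proposition~\ref{compinf}, and then transport the multiplicativity of $\widehat\delta^{(\infty)}$ through the commutative square of Proposition~\ref{chanr}.

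There is, however, a slip in your choice of universal ring. Taking $A = R = \mathbb Z[t][X]$ forces the $R$-algebra endomorphism $\sigma_A$ to be the identity, which contradicts $\sigma(x) = qx$. The twisted coordinate must live in a genuine extension of the base. Moreover, in the setting of this section one has $\sigma(x) = qx + h$ with $h \in R$ arbitrary (so $y = (1-q)x - h$), and the Taylor map $\Theta$ --- hence the twisted tensor $\otimes'$ --- depends on $\partial_\sigma$, which is sensitive to $h$ (already $\partial_\sigma(x^2) = (1+q)x + h$). Your reduction implicitly sets $h = 0$ and therefore does not specialize to the general case. The paper fixes both issues at once by taking $R = \mathbb Z[t,s]$, $A = R[x]$, $q = t$, $h = s$, and then passing to $\mathbb Q(t)[s]$: the map $\mathbb Z[t,s][x] \to A$ sending $t \mapsto q$, $s \mapsto h$, $x \mapsto x$ is a morphism of twisted $R$-algebras, so all the structures (including $\Theta$ and $\otimes'$) base-change correctly. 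With this correction, your argument is complete and coincides with the paper's.
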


Of course, the same result holds for the partial comultiplications.

\begin{proof}
First, we may clearly assume that $A = R[x]$ is the polynomial ring in the variable $x$.
We can then reduce to the case $R = \mathbb Z[t,s]$ with $q = t$ and $h = s$ and finally to the case $R = \mathbb Q(t)[s]$.
In other words, we may assume that all $q$-integers are invertible in $R$.
Then the assertion follows from proposition \ref{compinf} and theorem 3.5 of \cite{LeStumQuiros18} since we know that the comultiplication map is a ring morphism on $\widehat {\mathrm P}^{(\infty)}_{\sigma}$.
Actually, this last result itself follows  from the fact that comultiplication is already a ring morphism on $\mathrm P$ (it corresponds to the projection that forgets the middle term).
\end{proof}

%%%%%%%%%%%%%%%%
\begin{dfn}
The composition of  two twisted differential operators of level $0$, $\Phi : \mathrm P_{(n)_{\sigma}}^{(0)} \otimes_{A} M \to N$ and $\Psi :\mathrm P_{(m)_{\sigma}}^{(0)} \otimes_{A} L \to M$, is the twisted differential operator of level $0$
\begin{equation} \label{facdif}
\Phi \circ \Psi : \xymatrix@R0cm{ \mathrm P_{(m+n)_{\sigma}}^{(0)} \otimes'_{A} L \ar[r]^-{\delta \otimes \mathrm{Id}} & \mathrm P_{(n)_{\sigma}}^{(0)} \otimes'_{A} \mathrm P_{(m)_{\sigma}}^{(0)} \otimes'_A L \ar[r]^-{\mathrm{Id} \otimes'_{A} \Psi} & \mathrm P_{(n)_{\sigma}}^{(0)} \otimes'_{A} M \ar[r]^-{\Phi} & N.}
\end{equation}
\end{dfn}

%%%%%%%%%%%%%%%%%%
\begin{prop}
Composition of twisted differential operators of level $0$ is associative.
In particular, it turns $\mathrm D_{A/R,\sigma}^{(0)}$ into a ring.
\end{prop}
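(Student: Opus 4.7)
The plan is to reduce associativity of composition to \emph{coassociativity} of the comultiplication $\widehat\delta^{(0)}$, i.e.\ the equality
\[
(\widehat \delta^{(0)} \otimes' \mathrm{Id}) \circ \widehat \delta^{(0)} = (\mathrm{Id} \otimes' \widehat \delta^{(0)}) \circ \widehat \delta^{(0)}
\]
of maps $\widehat {\mathrm P}_{\sigma}^{(0)} \to \widehat {\mathrm P}_{\sigma}^{(0)} \widehat \otimes'_{A} \widehat {\mathrm P}_{\sigma}^{(0)} \widehat \otimes'_{A} \widehat {\mathrm P}_{\sigma}^{(0)}$, together with the analogous finite-order statement for the partial comultiplications $\delta_{m,n}^{(0)}$. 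The key point is that composition was defined in \eqref{facdif} using precisely the partial comultiplications, so once $\delta$ is proved to be coassociative, associativity of composition of three operators will follow by a single diagram chase.

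For coassociativity, I would argue in either of two equivalent ways. The cleanest is to invoke Proposition \ref{chanr}: on $\mathrm P = A \otimes_R A$, the comultiplication sends $z_1 \otimes z_2 \mapsto z_1 \otimes 1 \otimes z_2$, and both iterates of $\delta$ produce $z_1 \otimes 1 \otimes 1 \otimes z_2$, so $\delta$ is manifestly coassociative on $\mathrm P$. The commutativity of the diagram of Proposition \ref{chanr} then transports this identity to $\widehat {\mathrm P}_{\sigma}^{(0)}$. Alternatively, one can verify it directly from the explicit formula in Definition \ref{commu}: both $(\delta \otimes \mathrm{Id})\circ \delta$ and $(\mathrm{Id} \otimes \delta)\circ \delta$ send $\xi^{[n]}$ to
\[
\sum_{a+b+c=n} \xi^{[a]} \otimes' \xi^{[b]} \otimes' \xi^{[c]},
\]
after reindexing. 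The forthcoming assertion about partial comultiplications is identical.

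Given three composable operators $\Phi, \Psi, \Xi$ of respective orders $n,m,l$, both $(\Phi \circ \Psi) \circ \Xi$ and $\Phi \circ (\Psi \circ \Xi)$ unwind, by the definition \eqref{facdif}, into the composite
\[
\mathrm P_{(n+m+l)_{\sigma}}^{(0)} \otimes'_{A} K \longrightarrow \mathrm P_{(n)_{\sigma}}^{(0)} \otimes'_{A} \mathrm P_{(m)_{\sigma}}^{(0)} \otimes'_{A} \mathrm P_{(l)_{\sigma}}^{(0)} \otimes'_{A} K
\]
(by iterated comultiplication), followed by the obvious tensor product of $\Phi, \Psi, \Xi$; the two ways of iterating comultiplication coincide by coassociativity, so the two triple composites coincide.

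Finally, to promote $\mathrm D_{A/R,\sigma}^{(0)} = \mathrm{Diff}_{\sigma}^{(0)}(A)$ to a ring, it remains to exhibit a unit and check distributivity. The unit is the order-zero operator $\mathrm{Id}_A : \mathrm P_{(0)_\sigma}^{(0)} \otimes'_A A = A \to A$; neutrality on both sides follows from the fact that the partial comultiplications $\delta_{0,n}$ and $\delta_{n,0}$ are the canonical isomorphisms $\mathrm P^{(0)}_{(n)_\sigma} \simeq \mathrm P^{(0)}_{(0)_\sigma} \otimes'_A \mathrm P^{(0)}_{(n)_\sigma}$ and $\mathrm P^{(0)}_{(n)_\sigma} \simeq \mathrm P^{(0)}_{(n)_\sigma} \otimes'_A \mathrm P^{(0)}_{(0)_\sigma}$ (directly from Definition \ref{commu}, since only the terms with $i=n$ or $i=0$ survive). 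Distributivity on either side is immediate because each of the three maps in \eqref{facdif} is $A$-linear and tensor product commutes with sums. I expect the only mildly delicate point to be bookkeeping for the identification $\mathrm P^{(0)}_{(0)_\sigma} \otimes'_A - = -$, which is why I would state and use Proposition \ref{chanr} explicitly rather than rely on direct calculation alone.
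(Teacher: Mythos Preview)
Your argument is correct. The coassociativity identity $(\delta\otimes'\mathrm{Id})\circ\delta=(\mathrm{Id}\otimes'\delta)\circ\delta$ does follow immediately from the explicit formula of Definition~\ref{commu} (both sides send $\xi^{[n]}$ to $\sum_{a+b+c=n}\xi^{[a]}\otimes'\xi^{[b]}\otimes'\xi^{[c]}$), and once you have it the diagram chase for associativity of \eqref{facdif}, together with your remarks on the unit and distributivity, goes through without trouble.

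The paper takes a different route. Rather than proving coassociativity directly, it invokes the generic-reduction trick: since all the structure constants in Definition~\ref{commu} and in \eqref{facdif} are polynomial in $q$ and $h$, one may assume $R=\mathbb Q(t)[s]$, $q=t$, $h=s$; then all $q$-integers are invertible, Proposition~\ref{compinf} identifies $\mathrm P^{(0)}_{(n)_\sigma}$ with $\mathrm P^{(\infty)}_{(n)_\sigma}$, and associativity is inherited from the infinite-level theory established in \cite{LeStumQuiros18}. Your approach is more self-contained and elementary (it does not need the comparison with infinite level or an external reference), and in fact your ``cleanest'' route via Proposition~\ref{chanr} is morally the same reduction---coassociativity is obvious on $\mathrm P=A\otimes_R A$ and is transported down. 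The paper's approach, on the other hand, is a one-liner once the reduction machinery (used repeatedly throughout Sections~\ref{first}--5) is in place, and it avoids having to verify that the maps $\delta\otimes'\mathrm{Id}$ and $\mathrm{Id}\otimes'\delta$ are well-defined over the Taylor-twisted tensor product.
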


\begin{proof}
We can reduce as usual to the case where $R = \mathbb Q(t)[s]$, $q = t$ and $h=s$ and use the analogous result for twisted differential operators of infinite level (see Proposition 4.7 of \cite{LeStumQuiros18}).
\end{proof}

Recall that we introduced in Definition 5.4 of \cite{LeStumQuiros15a} the \emph{twisted Weyl algebra} $\mathrm D_{A/R,\sigma,\partial_\sigma}$ associated to the twisted differential algebra $A$: this is the Ore extension of $A$ by $\sigma$ and $\partial_{\sigma}$ as in proposition 1.4 of \cite{Bourbaki12}).
Concretely, this is the free $A$-module on abstract generators $\partial_{\sigma}^k$ with the commutation rule $\partial_{\sigma} \circ z = \sigma(z)\partial_{\sigma} + \partial_\sigma(z)$.

%%%%%%%%%%%%%%%%%%
\begin{prop}
There exists an isomorphism of filtered $R$-algebras $\mathrm D_{A/R,\sigma,\partial_\sigma} \simeq \mathrm D_{A/R,\sigma}^{(0)}$.
\end{prop}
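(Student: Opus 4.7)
The plan is to invoke the universal property of the Ore extension $\mathrm D_{A/R,\sigma,\partial_\sigma}$ (Proposition 1.4 of \cite{Bourbaki12}) to produce the ring homomorphism $\phi : \mathrm D_{A/R,\sigma,\partial_\sigma} \to \mathrm D^{(0)}_{A/R,\sigma}$. Concretely I would send $a \in A$ to the order-zero operator $\mu_a : \mathrm P^{(0)}_{(0)_{\sigma}} = A \to A$ of multiplication by $a$, and send the abstract generator $\partial_\sigma$ to the order-one operator $D : \mathrm P^{(0)}_{(1)_{\sigma}} \to A$ defined by $D(\xi^{[0]}) = 0$ and $D(\xi^{[1]}) = 1$. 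The universal property then produces $\phi$ as soon as the commutation rule $D \circ \mu_z = \mu_{\sigma(z)} \circ D + \mu_{\partial_{\sigma}(z)}$ is verified in $\mathrm D^{(0)}_{A/R,\sigma}$ for every $z \in A$.

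This verification is the main technical step. It proceeds by unfolding the composition law \eqref{facdif}. On $\xi^{[0]}$, the comultiplication from Definition \ref{commu} gives $1 \otimes' 1$; applying $\mathrm{Id} \otimes' \mu_z$ yields $1 \otimes' z = \Theta(z) \otimes' 1$, which reduces modulo $I^{[2]}$ to $z + \partial_\sigma(z)\xi$, whose image under $D$ is $\partial_\sigma(z)$. On $\xi^{[1]}$, the comultiplication gives $\xi \otimes' 1$, and pushing $z$ through via the Taylor-switch identity $\xi \otimes' z = \xi \Theta(z) \otimes' 1$ leaves us to reduce $\xi \Theta(z)$ modulo $I^{[2]}$. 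Using Proposition \ref{dpring} one finds $\xi \cdot \xi \equiv -y\xi \pmod{I^{[2]}}$, hence $\xi \Theta(z) \equiv (z - y\partial_\sigma(z))\xi$; applying the identity $\sigma(z) = z - y\partial_\sigma(z)$ (which follows from the fact that both $\sigma - \mathrm{Id}$ and $-y\partial_\sigma$ are $\sigma$-derivations of $A$ taking the common value $-y$ at the twisted coordinate $x$, combined with the uniqueness from Proposition 2.10 of \cite{LeStumQuiros18}) gives $\sigma(z)\xi$, on which $D$ evaluates to $\sigma(z)$. Comparing with the corresponding outputs of $\mu_{\sigma(z)}\circ D + \mu_{\partial_\sigma(z)}$ on $\xi^{[0]}$ and $\xi^{[1]}$ yields the required identity.

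It remains to prove that $\phi$ is an isomorphism of filtered $R$-algebras. By iterating the same unfolding of \eqref{facdif}, one proves by induction on $k$ that $\phi(\partial_\sigma^k) = \Phi_k$, where $\Phi_k : \mathrm P^{(0)}_{(k)_{\sigma}} \to A$ is the $A$-linear form dual to $\xi^{[k]}$, characterised by $\Phi_k(\xi^{[j]}) = \delta_{jk}$: indeed, the comultiplication $\xi^{[j]} \mapsto \sum_i \xi^{[j-i]} \otimes' \xi^{[i]}$ followed by $\mathrm{Id} \otimes' \Phi_{k-1}$ collapses to $\xi^{[j-k+1]}$, on which $D$ evaluates to $\delta_{j,k}$. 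Since $\mathrm P^{(0)}_{(n)_{\sigma}}$ is by Definition \ref{dfdp} the free $A$-module with basis $\xi^{[0]}, \ldots, \xi^{[n]}$, the module $\mathrm{Diff}^{(0)}_{n,\sigma}(A) = \mathrm{Hom}_A(\mathrm P^{(0)}_{(n)_{\sigma}}, A)$ is free on the dual basis $\Phi_0, \ldots, \Phi_n$. Writing a general element of the Ore extension as $P = \sum_k a_k \partial_\sigma^k$ with $a_k \in A$, its image $\phi(P) = \sum_k a_k \Phi_k$ is precisely the expansion of an arbitrary differential operator in this dual basis, so $\phi$ restricts to a bijection on each level-$n$ piece of the filtration and is therefore a filtered isomorphism of $R$-algebras.
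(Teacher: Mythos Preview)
Your proof is correct and takes a genuinely different route from the paper's. The paper argues as follows: it first observes that sending $\partial_\sigma^k$ to the $A$-linear form dual to $\xi^{[k]}$ is an isomorphism of filtered $A$-modules, and then checks multiplicativity by the ``generic trick'': reduce to the case where all $q$-integers are invertible, invoke Proposition~\ref{compinf} to identify $\mathrm D^{(0)}_{A/R,\sigma}$ with $\mathrm D^{(\infty)}_{A/R,\sigma}$, and finally quote the known isomorphism $\mathrm D_{A/R,\sigma,\partial_\sigma}\simeq\mathrm D^{(\infty)}_{A/R,\sigma}$ from \cite{LeStumQuiros18}. You instead produce the ring map directly from the universal property of the Ore extension by verifying the single commutation relation $D\circ\mu_z=\mu_{\sigma(z)}\circ D+\mu_{\partial_\sigma(z)}$ inside $\mathrm D^{(0)}_{A/R,\sigma}$, and then identify $\phi(\partial_\sigma^k)$ with the dual basis vector $\Phi_k$ by an explicit induction on the comultiplication formula. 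Your approach is more self-contained (no detour through the infinite-level theory, no specialization argument) and makes the isomorphism completely explicit; the paper's approach is shorter and highlights the compatibility with the infinite-level picture, which is conceptually useful elsewhere. One small remark: your appeal to Proposition~2.10 of \cite{LeStumQuiros18} for the identity $\sigma(z)=z-y\partial_\sigma(z)$ really uses that the module of $\sigma$-derivations is free of rank one on $\partial_\sigma$ (so that any $\sigma$-derivation is determined by its value on $x$, not just the one with value $1$); this is indeed what the twisted-coordinate hypothesis gives, but you might state it that way.
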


In the future, we will identify these two rings and simply write $\mathrm D_{A/R,\sigma}$.

\begin{proof}
There exists an obvious isomorphism of filtered $A$-modules $\mathrm D_{A/R,\sigma,\partial_\sigma} \simeq \mathrm D_{A/R,\sigma}^{(0)}$ obtained by making the $\partial_{\sigma}^k$'s dual to the  $\xi^{[k]}$'s.
We only need to show that this is a morphism of rings and, as usual, we may assume that all $q$-integers are invertible.
But then, it follows from proposition \ref{compinf} that there exists an isomorphism of filtered rings $\mathrm D_{A/R,\sigma}^{(\infty)} \simeq \mathrm D_{A/R,\sigma}^{(0)}$.
On the other hand, there exists also a canonical isomorphism of filtered rings $\mathrm D_{A/R,\sigma,\partial_\sigma} \simeq \mathrm D_{A/R,\sigma}^{(\infty)}$ as we saw in theorem 6.3 of \cite{LeStumQuiros18}.
Our isomorphism is obtained by composing them.
\end{proof}

\begin{rmk}
\begin{enumerate}
\item
This last result might give the feeling that we have been working quite hard for nothing:
defining twisted divided powers required some energy.
But this is not true.
The dual approach to the twisted Weyl $R$-algebra introduces new tools that will prove to be quite profitable.
Recall that it is also possible to define twisted differential operators of infinite level inductively as operators on the ring of functions and avoid the introduction of principal parts (and twisted powers).
Again, this might sound simpler but it is not the best way to do it.
\item
The canonical map $A[\xi] \to A\langle \xi \rangle$ is essentially dual to the canonical map $\mathrm D_{A/R,\sigma} \to \mathrm D_{A/R,\sigma}^{(\infty)}$ whose image is the subring $\overline {\mathrm D}_{A/R,\sigma}$ of \emph{small twisted differential operators} generated by functions and derivations inside $\mathrm{End}_{R}(A)$.
\end{enumerate}
\end{rmk}

\begin{prop}
Assume $R$ is $q$-divisible and $q\mathrm{-char}(R) = p > 0$.
Then, there exists a commutative diagram
\[
\xymatrix{
& \mathrm{End}_{A^{\partial_{\sigma}=0}}(A) \ar@{^{(}->}[r] & \mathrm{End}_{R}(A) \\
\mathrm D_{A/R,\sigma} \ar@{->>}[r] & \overline {\mathrm D}_{A/R,\sigma} \ar@{^{(}->}[r] \ar@{^{(}->}[u] & \mathrm D^{(\infty)}_{A/R,\sigma} \ar@{^{(}->}[u] \\
\mathrm{Diff}^{(0)}_{p-1,\sigma}(A) \ar@{=}[rr] \ar@{^{(}->}[u] \ar[ur]^{\simeq} && \mathrm{Diff}^{(\infty)}_{p-1,\sigma}(A). \ar@{^{(}->}[u] \ar[ul]^{\simeq}
}
\]
\end{prop}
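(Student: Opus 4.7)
The plan is to assemble the diagram by dualizing Proposition \ref{epimono} and combining the result with the identifications of rings of twisted differential operators already established. The middle row and its images in the top row are essentially obtained from $\mathrm{Hom}_A(-, A)$ applied to the corresponding chain of principal parts, while the bottom row records the finite-order filtration pieces.

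First, I would identify each vertex as the $\mathrm{Hom}_A(-, A)$-dual of a principal parts module: $\mathrm{End}_R(A) \simeq \mathrm{Hom}_A(\mathrm P, A)$ via the right action through $\Theta$; $\mathrm{End}_{A^{\partial_\sigma = 0}}(A) \simeq \mathrm{Hom}_A(A \otimes_{A^{\partial_\sigma = 0}} A, A)$ by tensor-Hom adjunction; $\mathrm D_{A/R,\sigma} \simeq \mathrm{Hom}_A(\widehat{\mathrm P}^{(0)}_\sigma, A)$ through the isomorphism $\mathrm D_{A/R,\sigma} \simeq \mathrm D^{(0)}_{A/R,\sigma}$ just proved; likewise $\mathrm D^{(\infty)}_{A/R,\sigma} \simeq \mathrm{Hom}_A(\widehat{\mathrm P}^{(\infty)}_\sigma, A)$; and by definition $\mathrm{Diff}^{(0)}_{p-1,\sigma}(A) = \mathrm{Hom}_A(\mathrm P^{(0)}_{(p-1)_\sigma}, A)$ as well as $\mathrm{Diff}^{(\infty)}_{p-1,\sigma}(A) = \mathrm{Hom}_A(\mathrm P^{(\infty)}_{(p-1)_\sigma}, A)$. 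The second epi-mono factorization of Proposition \ref{epimono} yields in particular an isomorphism $\mathrm P^{(\infty)}_{(p-1)_\sigma} \simeq \mathrm P^{(0)}_{(p-1)_\sigma}$ (visible already inside its proof), which dualizes to the bottom horizontal equality of the diagram.

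Next, dualizing the first factorization $\mathrm P \twoheadrightarrow A \otimes_{A^{\partial_\sigma = 0}} A \hookrightarrow \widehat{\mathrm P}^{(0)}_\sigma$ produces a chain $\mathrm D_{A/R,\sigma} \to \mathrm{End}_{A^{\partial_\sigma = 0}}(A) \hookrightarrow \mathrm{End}_R(A)$; I would then factor it through the image $\overline{\mathrm D}_{A/R,\sigma}$, noting that the inclusion $\overline{\mathrm D}_{A/R,\sigma} \hookrightarrow \mathrm{End}_{A^{\partial_\sigma = 0}}(A)$ is also transparent directly, since both left-multiplication by elements of $A$ and the operator $\partial_\sigma$ are $A^{\partial_\sigma = 0}$-linear. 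The diagonal isomorphism $\mathrm{Diff}^{(0)}_{p-1,\sigma}(A) \simeq \overline{\mathrm D}_{A/R,\sigma}$ then follows from the formula $\partial_\sigma^k = (k)_q! \partial_\sigma^{[k]}$ reproduced from \cite{LeStumQuiros18}: since $(p)_q! = 0$ the operator $\partial_\sigma^p$ acts as zero on $A$, so $\overline{\mathrm D}$ is generated over $A$ by $\partial_\sigma^0, \ldots, \partial_\sigma^{p-1}$; and under $q$-divisibility the integers $(k)_q$ for $1 \leq k \leq p-1$ are units, so the $\partial_\sigma^k$'s and the $\partial_\sigma^{[k]}$'s (for $k \leq p-1$) differ only by invertible scalars, exhibiting $\overline{\mathrm D}$ and $\mathrm{Diff}^{(0)}_{p-1,\sigma}(A)$ as free $A$-modules of rank $p$ with proportional bases inside $\mathrm D^{(\infty)}_{A/R,\sigma}$.

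Commutativity of the full diagram then follows from naturality of $\mathrm{Hom}_A(-, A)$ applied to Proposition \ref{epimono}, together with the compatibility of the Taylor maps at level zero and at infinite level and the obvious compatibility of the filtrations. The subtlest step I expect is checking that the ring structure on $\mathrm{Diff}^{(0)}_{p-1,\sigma}(A)$ coming from level-zero composition of operators agrees, through the diagonal, with the subring structure cut out inside $\mathrm{End}_R(A)$ by $\overline{\mathrm D}_{A/R,\sigma}$; this reduces to verifying that the comultiplications $\widehat\delta^{(0)}$ and $\widehat\delta^{(\infty)}$ induce the same map on the common quotient $\mathrm P^{(0)}_{(p-1)_\sigma} \simeq \mathrm P^{(\infty)}_{(p-1)_\sigma}$, which in turn follows from the identification $\xi^{[k]} = \xi^{(k)}/(k)_q!$ for $k < p$ provided by Proposition \ref{bijfond}.
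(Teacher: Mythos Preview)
Your proposal is correct and follows the paper's own approach: the paper's proof is the single line ``This is obtained by duality from proposition \ref{epimono},'' and you have simply unpacked that duality in full detail, identifying each vertex as $\mathrm{Hom}_A(-,A)$ of the corresponding principal-parts object and tracing the two epi-mono factorizations through. The extra care you take with the ring structures and with the explicit identification $\overline{\mathrm D}_{A/R,\sigma}\simeq \mathrm{Diff}^{(0)}_{p-1,\sigma}(A)$ via $\partial_\sigma^k=(k)_q!\,\partial_\sigma^{[k]}$ is a welcome elaboration but not a different strategy.
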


\begin{proof}
This is obtained by duality from proposition \ref{epimono}.\end{proof}

%%%%%%%%%%%%%%%%%%%%
%%%%%%%%%%%%%%%%%%%%
\section{Twisted $p$-curvature}

As before, $A$ denotes a twisted $R$-algebra with coordinate $x$.
In particular, we have $\sigma(x) = qx+h$ with $q,h \in R$ and we set $y := x - \sigma(x)$.
We also assume in this section that $q\mathrm{-char}(R) = p > 0$.

%%%%%%%%%%%%%%%%%%%%%
\begin{lem} \label{compcan}
For all $n \in \mathbb N$, the diagram
\[
\xymatrix{A \ar@<2pt>[rr]^-{\mathrm{can}} \ar@<-2pt>[rr]_-{\Theta} &&  \mathrm P_{(n)_{\sigma}}^{(0)}\ar[rr] &&  \mathrm P_{(n)_{\sigma}}^{(0)}/ (\xi)}
\]
is commutative.
\end{lem}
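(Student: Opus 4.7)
The plan is to compute the difference $\Theta_n^{(0)}(z) - \mathrm{can}(z)$ for a general $z \in A$ and show that every summand beyond the zeroth one lies in the principal ideal $(\xi)$ of $\mathrm P^{(0)}_{(n)_\sigma}$.

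Starting from the previously established formula $\widehat\Theta^{(0)}(z) = \sum_{k\ge 0}\partial_\sigma^k(z)\,\xi^{[k]}$, projecting to level $n$ and subtracting $\mathrm{can}(z) = z = \partial_\sigma^0(z)\,\xi^{[0]}$ yields
\[
\Theta_{n}^{(0)}(z) - \mathrm{can}(z) = \sum_{k=1}^{n}\partial_\sigma^k(z)\,\xi^{[k]}.
\]
Next I rewrite each term by combining two facts recalled earlier in the paper: the identity $\partial_\sigma^k(z) = (k)_q!\,\partial_\sigma^{[k]}(z)$ (from \cite{LeStumQuiros18}) and the statement, assembled from propositions \ref{bijfond} and \ref{dpring}, that $\xi^{(k)} \mapsto (k)_q!\,\xi^{[k]}$ defines a morphism of $A$-algebras $A[\xi] \to A\langle\xi\rangle$. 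This repackages each summand as
\[
\partial_\sigma^k(z)\,\xi^{[k]} \;=\; \partial_\sigma^{[k]}(z)\cdot\overline{\xi^{(k)}},
\]
where $\overline{\xi^{(k)}}$ denotes the image of $\xi^{(k)}$ in $A\langle\xi\rangle$.

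To finish, note that for every $k\ge 1$ the polynomial
\[
\xi^{(k)} \;=\; \xi\prod_{i=1}^{k-1}\bigl(\xi + (i)_q y\bigr)
\]
is visibly divisible by $\xi$ in $A[\xi]$; since the algebra morphism above sends $\xi$ to $\xi^{[1]} = \xi$, the element $\overline{\xi^{(k)}}$ belongs to the principal ideal $(\xi)$ of $A\langle\xi\rangle$, and hence to the ideal $(\xi)$ in the quotient $\mathrm P^{(0)}_{(n)_\sigma}$. Therefore $\Theta_n^{(0)}(z) \equiv z$ modulo $(\xi)$, giving the commutativity of the diagram. The only delicate step is the rewriting $\partial_\sigma^k(z)\,\xi^{[k]} = \partial_\sigma^{[k]}(z)\cdot\overline{\xi^{(k)}}$: the appeal of working with the divided derivations $\partial_\sigma^{[k]}$ is precisely that it sidesteps the fact that $(k)_q!$ can vanish in our $q$-characteristic $p$ setting, while still making the divisibility of $\xi^{(k)}$ by $\xi$ do the remaining work.
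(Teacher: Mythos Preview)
Your argument is correct. The identity $\partial_\sigma^k(z)\,\xi^{[k]} = \partial_\sigma^{[k]}(z)\cdot\overline{\xi^{(k)}}$ holds exactly as you say, since both sides equal $(k)_q!\,\partial_\sigma^{[k]}(z)\,\xi^{[k]}$ by the two facts you cite, and this is valid whether or not $(k)_q!$ vanishes. The divisibility of $\xi^{(k)}$ by $\xi$ for $k\ge 1$ then finishes the job.

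However, your route differs from the paper's. The paper never invokes the explicit Taylor expansion or the operators $\partial_\sigma^{[k]}$. Instead it argues by a diagram chase: at the level of $\mathrm P = A\otimes_R A$ the two $A$-structures (left and right) agree modulo the diagonal ideal $I$, simply because $\tilde z - z \in I$ for all $z$; then one observes that the map $\mathrm P \to \mathrm P^{(0)}_{(n)_\sigma}$ sends $I$ into $(\xi)$ (since $\xi$ is the image of $\tilde x - x$), so commutativity descends. The paper's argument is thus coordinate-free and avoids quoting \cite{LeStumQuiros18}. Your computation, on the other hand, makes transparent \emph{why} each piece of $\Theta(z)-z$ lands in $(\xi)$, and nicely illustrates the interplay between twisted and divided powers. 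In effect you are carrying out the paper's diagram chase term by term in the $\xi^{[k]}$-basis.
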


It means that, modulo $\xi$, both $A$-algebra structures coincide on $\mathrm P_{(n)_{\sigma}}^{(0)}$.

\begin{proof}
If $I$ denotes the ideal of the diagonal in $\mathrm P := A \otimes_{R} A$ as usual, we may consider the following commutative diagram
\[
\xymatrix{
A \ar@<2pt>[rr]^-{\mathrm{can}} \ar@{=}[d] \ar@<-2pt>[rr]_-{\Theta} && \mathrm P \ar[rr] \ar[d] &&  \mathrm P/I = A \ar[d]
\\
A \ar@<2pt>[rr]^-{\mathrm{can}} \ar@<-2pt>[rr]_-{\Theta} &&  \mathrm P_{(n)_{\sigma}}^{(0)}\ar[rr] &&  \mathrm P_{(n)_{\sigma}}^{(0)}/\xi
}
\]
The upper left maps are given by left and right actions of $A$ on $\mathrm P$ and it follows that the upper line is commutative.
And all the squares are commutative.
Therefore, the second line must be commutative too.
\end{proof}

%%%%%%%%%%%%%%%%%%
\begin{prop} \label{comdelt}
For all $m, n \in \mathbb N$, the following diagram is commutative:
\[
\xymatrix{\mathrm P_{((n+m)p)_{\sigma}}^{(0)} \ar[r]^-\delta \ar@{->>}[d] & \mathrm P_{(np)_{\sigma}}^{(0)} \widehat \otimes'_{A} \widehat {\mathrm P}_{(mp)_{\sigma}}^{(0)} \ar@{->>}[d]\\
\mathrm P_{((n+m)p)_{\sigma}}^{(0)}/ (\xi) \ar[r] & \mathrm P_{(np)_{\sigma}}^{(0)}/ (\xi)  \otimes_{A}  \mathrm P_{(mp)_{\sigma}}^{(0)}/ (\xi) \\
A\langle\omega\rangle/I_{\omega}^{[n+m+1]} \ar[r]^-\delta \ar[u] & A\langle\omega\rangle/I_{\omega}^{[n+1]} \otimes_{A} A\langle\omega\rangle/I_{\omega}^{[m+1]}. \ar[u]}
\]
\end{prop}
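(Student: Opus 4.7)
The plan is to verify the two squares of the diagram separately. The upper square expresses the fact that $\delta$ descends to the quotients modulo $(\xi)$; the lower square is the substantive one and encodes compatibility of the twisted divided $p$-power map with comultiplication.

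For the upper square, I would start from the identity $\delta(\xi) = 1 \otimes' \xi + \xi \otimes' 1$, which follows at once from Definition \ref{commu}. Since $\delta$ is a ring homomorphism (established just before this proposition), the principal ideal $(\xi) \subseteq \mathrm P^{(0)}_{((n+m)p)_\sigma}$ is mapped into the ideal generated by $\xi \otimes' 1$ and $1 \otimes' \xi$ in the tensor product, so $\delta$ induces a well-defined map on the quotients. To replace $\otimes'_A$ by $\otimes_A$ on the target I would invoke Lemma \ref{compcan}: modulo $(\xi)$, the Taylor map coincides with the canonical morphism, so the twisted left $A$-action used to define $\otimes'$ collapses to the ordinary one.

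For the lower square, the vertical identifications come from the twisted divided $p$-power map of Theorem \ref{dualpc}, which sends $\omega^{[j]}$ to $\xi^{[jp]}$; this identification respects the divided-power filtration since at each finite order Lemma \ref{freeb} identifies $\mathrm P^{(0)}_{(kp)_\sigma}/(\xi)$ with the free $A$-module on the $\xi^{[jp]}$ for $0 \le j \le k$. Now Definition \ref{commu} gives
$$\delta(\xi^{[kp]}) = \sum_{i} \xi^{[kp-i]} \otimes' \xi^{[i]},$$
and Lemma \ref{freeb} says that modulo $(\xi)$ in either factor all terms with $p \nmid i$ vanish. Setting $i = jp$, the surviving sum is $\sum_j \xi^{[(k-j)p]} \otimes \xi^{[jp]}$. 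On the other hand, Proposition \ref{dualcom} (applicable since $q = 1$ for the variable $\omega$) gives $\delta(\omega^{[k]}) = \sum_j \omega^{[k-j]} \otimes \omega^{[j]}$, whose image under the $p$-power map is precisely the same expression.

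The only real subtlety is the bookkeeping of the summation ranges, since these are partial comultiplications between finite truncations: on the upper row the sum runs over $\max(0, kp - np) \le i \le \min(kp, mp)$, which after the substitution $i = jp$ becomes $\max(0, k-n) \le j \le \min(k, m)$, and this exactly matches the range for the partial comultiplication on $A\langle\omega\rangle/I_\omega^{[n+1]} \otimes A\langle\omega\rangle/I_\omega^{[m+1]}$. So the comparison is term by term with no cancellation, and the diagram commutes.
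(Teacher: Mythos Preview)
Your proof is correct and follows essentially the same approach as the paper's: both arguments boil down to observing that the comultiplication formulas on generators match across the divided $p$-power map, and that Lemma~\ref{compcan} is what allows passing from $\otimes'_A$ to $\otimes_A$ after reducing modulo $(\xi)$. You simply spell out in more detail what the paper compresses into the phrase ``compatible formulas on the generators,'' in particular the explicit vanishing (via Lemma~\ref{freeb}) of the terms with $p\nmid i$ and the bookkeeping of truncation ranges.
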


The upper map $\delta$ comes from definition \ref{commu} and the bottom one is the comultiplication map that we met in proposition \ref{dualcom}.
The bottom vertical maps are induced by the twisted divided $p$-power map \eqref{callu}.

\begin{proof}
By definition, all horizontal arrows are given by compatible formulas on the generators.
However, the upper right tensor product is obtained by using the Taylor map on the right factor although the down right tensor product uses the canonical structure on both side.
But this does not matter thanks to lemma \ref{compcan}.
\end{proof}

%%%%%%%%%%%%%%%%%%
\begin{prop} \label{duacent}
Assume that $R$ is $q$-divisible.
Then, there exists a (unique) $A$-linear homomorphism of $R$-algebras
\begin{equation} \label{pcurv}
\xymatrix@R0cm{A[\theta]  \ar[r] & \mathrm{D}_{A/R,\sigma}
\\ \theta \ar@{|->}[r] & \partial_{\sigma}^{p}}
\end{equation}
It induces an isomorphism between $A[\theta]$ and the centralizer $A\mathrm{Z}_{A/R,\sigma}$ of $A$ in $\mathrm{D}_{A/R,\sigma}$ and an isomorphism between $A^{\partial_{\sigma}=0}[\theta]$ and the center $\mathrm{Z}_{A/R,\sigma}$ of $\mathrm{D}_{A/R,\sigma}$.
\end{prop}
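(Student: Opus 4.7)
\emph{Construction and ring structure.} The plan is to build $u : A[\theta] \to \mathrm{D}_{A/R,\sigma}$ by dualizing the $A$-algebra surjection
\[
q : A\langle\xi\rangle_{q,y} \twoheadrightarrow A\langle\xi\rangle_{q,y}/(\xi) \simeq A\langle\omega\rangle_{1,y^p}
\]
obtained by composing the quotient map with the isomorphism of Theorem \ref{dualpc}. Under the pairing introduced just before Proposition \ref{dualcom} and the identification of $\mathrm{D}_{A/R,\sigma}$ with the continuous left $A$-linear dual of $\widehat{\mathrm P}^{(0)}_\sigma = A\langle\langle\xi\rangle\rangle$ (with dual basis $\partial_\sigma^k$ to $\xi^{[k]}$), the dual of the surjection $q$ is an injective $A$-linear map $u$. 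Lemma \ref{freeb} identifies the class of $\xi^{[kp]}$ modulo $(\xi)$ with $\omega^{[k]}$, so $u(\theta^k)$ is the element of $\mathrm{D}_{A/R,\sigma}$ that pairs to $1$ with $\xi^{[kp]}$ and to $0$ with every other $\xi^{[m]}$, namely $\partial_\sigma^{kp}$; in particular $u(\theta) = \partial_\sigma^p$. That $u$ is a ring homomorphism will follow from combining three facts: by the definition of composition \eqref{facdif}, multiplication in $\mathrm{D}_{A/R,\sigma}$ is dual to $\widehat\delta^{(0)}$; by Proposition \ref{dualcom}(1), multiplication on $A[\theta]$ is dual to the comultiplication on $A\langle\omega\rangle$; and Proposition \ref{comdelt} (together with Lemma \ref{compcan}, which aligns the Taylor and canonical structures modulo $(\xi)$) precisely asserts that $q$ intertwines these two comultiplications.

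\emph{Image is the centralizer.} Since $u$ is an $A$-linear ring homomorphism from a commutative ring sending $1$ to $1$, its image is a commutative subring of $\mathrm{D}_{A/R,\sigma}$ containing $A$, hence $u(A[\theta]) \subseteq A\mathrm{Z}_{A/R,\sigma}$. For the reverse inclusion, unwinding the composition formula \eqref{facdif} shows that $\phi \in \mathrm{D}_{A/R,\sigma}$ commutes with $a \in A$ if and only if $\phi$ vanishes on the left ideal generated by $\Theta(a) - a$. Using the infinite-level Taylor expansion $\Theta(a) - a = \sum_{k\geq 1}\partial_\sigma^{[k]}(a)\xi^k$ and pushing forward to level zero, this element manifestly lies in the principal ideal $(\xi)$ of $\widehat{\mathrm P}^{(0)}_\sigma$. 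Conversely $\xi = \Theta(x) - x$ itself generates $(\xi)$, so the ideal $\langle\Theta(a)-a : a\in A\rangle$ is exactly $(\xi)$. Therefore $\phi \in A\mathrm{Z}_{A/R,\sigma}$ if and only if $\phi$ vanishes on $\ker(q)=(\xi)$, i.e.~$\phi \in u(A[\theta])$.

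\emph{The center.} Since $\mathrm{D}_{A/R,\sigma}$ is generated as an $R$-algebra by $A$ and $\partial_\sigma$, the center is $A\mathrm{Z}_{A/R,\sigma} \cap \{\phi : [\partial_\sigma, \phi] = 0\}$. For $\phi = u(\sum a_k\theta^k) = \sum a_k \partial_\sigma^{kp}$, the Ore commutation rule $\partial_\sigma z = \sigma(z)\partial_\sigma + \partial_\sigma(z)$ together with $[\partial_\sigma, \partial_\sigma^{kp}] = 0$ gives
\[
[\partial_\sigma, \phi] = \sum_k \bigl((\sigma(a_k) - a_k)\partial_\sigma^{kp+1} + \partial_\sigma(a_k)\partial_\sigma^{kp}\bigr),
\]
which vanishes in the free $A$-basis $\{\partial_\sigma^n\}$ iff $\sigma(a_k) = a_k$ and $\partial_\sigma(a_k) = 0$ for all $k$. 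Using the identity $\sigma(a) = a - y\partial_\sigma(a)$ (verified by induction from $\partial_\sigma(x) = 1$ and the $\sigma$-Leibniz rule), both conditions collapse to $a_k \in A^{\partial_\sigma=0}$, yielding $\mathrm{Z}_{A/R,\sigma} = u(A^{\partial_\sigma=0}[\theta]) \simeq A^{\partial_\sigma=0}[\theta]$.

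The main obstacle I anticipate is the identification in the second paragraph of the left ideal generated by $\{\Theta(a)-a : a\in A\}$ with the principal ideal $(\xi)$: the inclusion $\supseteq$ is immediate, but $\subseteq$ is delicate because $\Theta(a) - a$ written in the $\xi^{[k]}$-basis can have contributions at indices divisible by $p$, which by Lemma \ref{freeb} lie outside $(\xi)$. Switching to the $\xi^k$-expansion from the infinite-level Taylor map makes divisibility by $\xi$ manifest, at the cost of invoking the comparison map between the infinite-level and level-zero principal parts.
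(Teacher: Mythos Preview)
Your proof is correct. The construction of the map and the verification that it is a ring homomorphism follow exactly the paper's strategy: dualize the surjection of Theorem \ref{dualpc} and use Proposition \ref{comdelt} together with Proposition \ref{dualcom}(1) to match the two comultiplications.

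For the centralizer, however, you take a genuinely different route. The paper proves the inclusion $A\mathrm{Z}_{A/R,\sigma}\subset A[\partial_\sigma^p]$ by a bare-hands Weyl-algebra computation (Lemma \ref{centcomp}): writing $\varphi=\sum z_k\partial_\sigma^k$, expanding $\varphi x$ via the Ore relation, and reading off a recursion on the $z_k$. Your argument is more conceptual: you identify $[\phi,a]$ with the functional $p\mapsto\phi\bigl((\Theta(a)-a)p\bigr)$ and then show that the ideal generated by all $\Theta(a)-a$ is exactly $(\xi)$, so that commuting with $A$ is literally the same as factoring through $A\langle\xi\rangle/(\xi)$. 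This is a cleaner explanation of \emph{why} the centralizer is the dual of the quotient, whereas the paper's computation is quicker but less illuminating. Your treatment of the center is essentially the same as Lemma \ref{centcomp}(2).

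Your ``anticipated obstacle'' is in fact a non-issue, and your detour through the infinite-level Taylor expansion is unnecessary. The point is that in $q$-characteristic $p$ one has $\partial_\sigma^k=(k)_q!\,\partial_\sigma^{[k]}=0$ as an operator on $A$ for every $k\geq p$, so the level-zero Taylor expansion is actually the finite sum $\Theta(a)-a=\sum_{k=1}^{p-1}\partial_\sigma^k(a)\xi^{[k]}$, all of whose terms lie in $(\xi)$ by Lemma \ref{freeb}. Equivalently, this is precisely the content of Lemma \ref{compcan}, which you already invoked in your first paragraph; it says directly that $\Theta(a)\equiv a\pmod{(\xi)}$. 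Either way, there is no contribution at indices divisible by $p$ to worry about.
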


\begin{proof}
We know from the first part of proposition \ref{dualcom} that the bottom map of proposition \ref{comdelt} is dual to multiplication on the polynomial ring $A[\theta]$.
And by definition, the top map is dual to multiplication on $\mathrm{D}_{A/R,\sigma}$.
Moreover, since we assume that $R$ is $q$-divisible, it follows from theorem \ref{dualpc} that the bottom vertical maps of proposition \ref{comdelt} are bijective.
Therefore, by duality, the top vertical maps corresponds to an injective morphism of $R$-algebras $A[\theta] \to \mathrm{D}_{A/R,\sigma}$ that sends $\theta$ to $\partial_{\sigma}^p$.
Since $A[\theta]$ is a commutative ring, its image is contained into the centralizer $A\mathrm{Z}_{A/R,\sigma}$ of $A$ in $\mathrm{D}_{A/R,\sigma}$.
Conversely, since $R$ is $q$-flat, it follows from the first part of lemma \ref{centcomp} below that the image of $A[\theta]$ is exactly $A\mathrm{Z}_{A/R,\sigma}$.
The assertion about $\mathrm{Z}_{A/R,\sigma}$ is then a consequence of the last assertion of the same lemma.
\end{proof}
 
%%%%%%%%%%%%%%%%
\begin{lem} \label{centcomp}
We denote by $A[\partial_{\sigma}^p]$ (resp. $A^{\partial_{\sigma}=0}[\partial_{\sigma}^p]$) the $A$-submodule (resp. $A^{\partial_{\sigma}=0}$-module) of $\mathrm D_{A/R,\sigma}$ generated by $\partial_{\sigma}^{pk}$ with $k \in \mathbb N$.
Then,
\begin{enumerate}
\item if $A$ is $q$-flat, we have $A\mathrm {Z}_{A/R,\sigma} \subset A[\partial_{\sigma}^p]$,
\item we always have $A[\partial_{\sigma}^p] \cap \mathrm{Z}_{A/R} = A^{\partial_{\sigma}=0}[\partial_{\sigma}^p] \cap A\mathrm {Z}_{A/R,\sigma}$.
\end{enumerate}
\end{lem}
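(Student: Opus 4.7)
Both parts rest on explicit commutator computations in the Ore extension $\mathrm D_{A/R,\sigma}$, governed by the defining relation $\partial_{\sigma}\circ z = \sigma(z)\partial_{\sigma} + \partial_{\sigma}(z)$ for $z \in A$. My overall plan is, in each case, to extract a linear recurrence or vanishing condition on the coefficients of an $A$-expansion $D = \sum_{k} a_{k}\partial_{\sigma}^{k}$ from the equations $[D,x]=0$ or $[D,\partial_{\sigma}]=0$, and then read off the claim using the $A$-linear independence of the basis $\{\partial_{\sigma}^{n}\}_{n \geq 0}$ of $\mathrm D_{A/R,\sigma}$.

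For part (1), I would first establish by induction on $k$, using $\sigma(x)=qx+h$, $\partial_{\sigma}(x)=1$, and the identity $(k+1)_{q}=1+q(k)_{q}$, the commutator formula
\[
[\partial_{\sigma}^{k},x] = (k)_{q}\bigl(\partial_{\sigma}^{k-1} - y\,\partial_{\sigma}^{k}\bigr).
\]
Applied to $D=\sum_{k=0}^{N} a_{k}\partial_{\sigma}^{k} \in A\mathrm Z_{A/R,\sigma}$, the equation $[D,x]=0$ together with linear independence of the $\partial_{\sigma}^{k}$ yields the recurrence $(k+1)_{q}\,a_{k+1} = (k)_{q}\,y\,a_{k}$ for every $k \geq 0$. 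A short induction then forces $a_{k}=0$ whenever $p \nmid k$: the hypothesis of $q$-flatness is used exactly to ensure that $(k+1)_{q}$ is regular whenever $p \nmid k+1$, and whenever this happens the right-hand side of the recurrence vanishes (either because $p \mid k$, so $(k)_{q}=0$, or because $a_{k}=0$ by the inductive hypothesis), forcing $a_{k+1}=0$. Hence $D \in A[\partial_{\sigma}^{p}]$, proving (1).

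For part (2), I would first record the auxiliary identity
\[
z - \sigma(z) = y\,\partial_{\sigma}(z), \qquad z \in A,
\]
which holds on the generator since $x-\sigma(x)=y=y\cdot\partial_{\sigma}(x)$ and extends by a direct check of compatibility with products (using the $\sigma$-derivation rule and commutativity of $A$). Its key consequence is that every $\partial_{\sigma}$-horizontal element of $A$ is automatically $\sigma$-invariant. Then, for $D=\sum_{j}a_{j}\partial_{\sigma}^{jp}$, expanding $[a_{j},\partial_{\sigma}]$ via the commutation rule gives
\[
[D,\partial_{\sigma}] = \sum_{j}\bigl((a_{j}-\sigma(a_{j}))\,\partial_{\sigma}^{jp+1} - \partial_{\sigma}(a_{j})\,\partial_{\sigma}^{jp}\bigr).
\]
Since the exponents $\{jp,\,jp+1\}_{j}$ are pairwise distinct (as $p>0$), $A$-linear independence of the $\partial_{\sigma}^{n}$ reduces $[D,\partial_{\sigma}]=0$ to $\partial_{\sigma}(a_{j})=0$ for all $j$ (the vanishing of $a_{j}-\sigma(a_{j})$ then being automatic by the auxiliary identity). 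Both inclusions of the claimed equality follow at once: an element of the left-hand side commutes with $\partial_{\sigma}$, so all $a_{j}$ are horizontal and $D$ belongs to the right-hand side; conversely, if the $a_{j}$ are horizontal and $D$ commutes with $A$, the same bracket computation shows $[D,\partial_{\sigma}]=0$, placing $D$ in the center and hence in the left-hand side.

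The main technical obstacle is pinning down the commutator formula $[\partial_{\sigma}^{k},x]=(k)_{q}(\partial_{\sigma}^{k-1}-y\,\partial_{\sigma}^{k})$, together with the companion identity $z-\sigma(z)=y\,\partial_{\sigma}(z)$; once these are in hand both parts reduce to coefficient-by-coefficient bookkeeping in the $A$-basis $\{\partial_{\sigma}^{n}\}$. The role of $q$-flatness in (1), and its absence in (2), comes from the fact that (1) needs regularity of the leading $q$-integers $(k+1)_{q}$ appearing in the recurrence, whereas (2) only exploits the disjointness of the two families of exponents $jp$ and $jp+1$.
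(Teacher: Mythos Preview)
Your argument follows the same route as the paper's: for (1) you extract the recurrence $(k+1)_q a_{k+1} = (k)_q y\, a_k$ from $[D,x]=0$ (the paper obtains the equivalent relation by writing $\partial_\sigma^k x = \sigma^k(x)\partial_\sigma^k + (k)_q\partial_\sigma^{k-1}$ and then using $x-\sigma^k(x)=(k)_q y$), and for (2) you compare coefficients in $[D,\partial_\sigma]$ exactly as the paper does, reducing to $\partial_\sigma(a_j)=0$ together with the implication ``horizontal $\Rightarrow$ $\sigma$-invariant''.

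One small caveat on that last implication: your justification of $z-\sigma(z)=y\,\partial_\sigma(z)$ via ``holds on the generator and extends by compatibility with products'' tacitly assumes that $A$ is generated as an $R$-algebra by $x$, which the twisted-coordinate hypothesis does not guarantee. What your product check actually shows is that both $z\mapsto z-\sigma(z)$ and $z\mapsto y\,\partial_\sigma(z)$ are $\sigma$-derivations of $A$; since they agree on $x$, the uniqueness of $\sigma$-derivations built into the twisted-coordinate package (the same fact the paper invokes via an external reference) then yields the identity on all of $A$.
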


Be careful that, in this lemma, $A[\partial_{\sigma}^p]$ and $A^{\partial_{\sigma}=0}[\partial_{\sigma}^p]$ denote the $A$-submodules generated by the powers of $\partial_{\sigma}^p$ which are \emph{a priori} different from the $R$-subalgebra generated by $A$ and $\partial_{\sigma}^p$ (as long as this last ring is not known to be commutative for example).

\begin{proof}
If $\varphi := \sum z_{k} \partial_{\sigma}^k \in \mathrm D_{A/R,\sigma}$, we can use proposition 6.4 from \cite{LeStumQuiros18} and write
\[
\varphi x = \sum z_{k} \partial_{\sigma}^kx = \sum z_{k}(\sigma^k(x) \partial_{\sigma}^{k} + (k)_{q}\partial_{\sigma}^{k-1}) 
\]
\[
= \sum \sigma^k(x)z_{k} \partial_{\sigma}^{k} + \sum (k)_{q}z_{k}\partial_{\sigma}^{k-1} = \sum  \left(\sigma^k(x)z_{k} + (k+1)_{q}z_{k+1}\right) \partial_{\sigma}^{k}.
\]
Therefore, if $\varphi$ commutes with $x$, we will have
\[
\forall k \geq 0, \quad  \sigma^k(x)z_{k} + (k+1)_{q}z_{k+1} = xz_{k}.
\]
For $k = 0$, we obtain $z_{1} = 0$.
If $k$ is a positive integer such that $z_{k-1} = 0$, we must have $(k)_{q}z_{k} = 0$.
If we assume that $A$ is $q$-flat, we must have $(k)_{q} = 0$ or $z_{k} = 0$.
Since $q\mathrm{-char}(R) = p >0$, this exactly means that $\varphi \in A[\partial_{\sigma}^{p}]$.

We now prove the second assertion.
We pick-up some
\[
\varphi := \sum z_{k} \partial_{\sigma}^{kp} \in A[\partial_{\sigma}^p]\cap A\mathrm {Z}_{A/R,\sigma}.
\]
Then, we have $\varphi \in \mathrm{Z}_{A/R}$ if and only if $\varphi \partial_{\sigma} = \partial_{\sigma} \varphi$ which means that
\[
\sum z_{k} \partial_{\sigma}^{kp+1} = \sum  \partial_{\sigma} z_{k} \partial_{\sigma}^{kp} =  \sum  \sigma(z_{k}) \partial_{\sigma}^{kp+1} +  \partial_{\sigma}(z_{k}) \partial_{\sigma}^{kp}.
\]
Thus we see that the condition is equivalent to
\[
\forall k \in \mathbb N, \quad z_{k} =  \sigma(z_{k}) \quad \mathrm{and} \quad \partial_{\sigma}(z_{k}) = 0.
\]
It follows from lemma 6.4 of \cite{LeStumQuiros15a} for example that the first condition is implied by the second and we are done.
\end{proof}

%%%%%%%%%%%%%%%%
\begin{dfn}
The map \eqref{pcurv} is the \emph{twisted $p$-curvature map}.
\end{dfn}

%%%%%%%%%%%%%%%%%%%%
%%%%%%%%%%%%%%%%%%%%
\section{Divided Frobenius} \label{secsix}

In this section, the ring $R$ is endowed with an endomorphism $F^*_{R}$, $A$ denotes a commutative $R$-algebra and $x$ is any element of $A$.
We set $y := (1-q)x$.
We also fix a $p \in \mathbb N \setminus \{0\}$ and at some point, we will use $q' := q^p$  and $y' := (p)_{q}y$.

Recall also that we write for all $n \in \mathbb N$,
\[
\xi^{(n)} := \xi^{(n)_{q,y}}:= \prod_{i=0}^{n-1}\left(\xi + (i)_{q}y\right) = \prod_{i=0}^{n-1}\left(\xi + (1 -q^i)x\right) = \prod_{i=0}^{n-1}\left(x + \xi -q^ix\right).
\]

%%%%%%%%%%%%%%%
\begin{dfn}
A \emph{$p$-Frobenius} on $A$ (with respect to $F_{R}^*$ and $x$) is a morphism of $R$-algebras $F_{A/R}^* : A' := R {}_{{}_{F^*_{R}}\nwarrow}\!\!\otimes_{R} A \to A$ such that $F_{A/R}^*(1 \otimes x) = x^p$.
\end{dfn}

%%%%%%%%%%%%%%%
\begin{xmp}
\begin{enumerate}
\item
If $R$ is a ring of prime characteristic $p >0$ endowed with the $p$th power map, then the usual relative Frobenius is a $p$-Frobenius on $A$.
\item
If $R$ is a ring of $p^N$-torsion with $p$ prime, $F^*$ is a lifting of the $p$th power map on $R/p$ and $x$ is an \'etale coordinate on $A$, then there exists a unique $p$-Frobenius on $A$.
\item
If $A = R[x]$ or $A = R[x, x^{-1}]$, then there exists a unique $p$-Frobenius on $A$.
\end{enumerate}
\end{xmp}

%%%%%%%%%%%%
\begin{dfn} \label{frobd}
If $F_{A/R}^*$ is a  $p$-Frobenius on $A$, then the \emph{$p$-Frobenius} $F^*_{A[\xi]/R}$ on $A[\xi]$ is the $F_{A/R}^*$-linear morphism of $R$-algebras
\[
\xymatrix@R=0cm{
F_{A/R}^* : A'[\xi] \ar[r] & A[\xi] \\ \xi \ar@{|->}[r] & (x + \xi)^p - x^p.
}
\]
\end{dfn}

\begin{rmk}
\begin{enumerate}
\item
The $p$-Frobenius on $A[\xi]$ is both a $p$-Frobenius with respect to $x$ and to $x + \xi$.
\item
When $R$ has $q$-characteristic $p$, then $(x + \xi)^p - x^p = \xi^{(p)}$ (use proposition 4.6 of \cite{LeStumQuiros15} for example).
\item \label{rem3}
There exists a commutative diagram
\[
\xymatrix{A'[\xi] \ar[rr]^{F^*} \ar[d] &&A[\xi] \ar[d] \\ A' \otimes_{R} A' \ar[rr]^{F^* \otimes F^*} &&A \otimes_{R} A
}
\]
where the vertical map sends $\xi$ to $1 \otimes x - x \otimes 1$.
\end{enumerate}
\end{rmk}

We will frequently need the twisted binomial formula (see proposition 2.14 of \cite{LeStumQuiros15} for example) that we recall now:
\begin{equation} \label{bino1}
\forall z_{1}, z_{2} \in A, \forall n \in \mathbb N, \quad \prod_{i=0}^{n-1}(q^iz_{1}+z_{2}) = \sum_{k=0}^n q^{\frac {k(k-1)}2} {n \choose k}_q z_{1}^{k}z_{2}^{n-k}.
\end{equation}

Now, we become also interested in the Frobenius version of the twisted powers.
Recall that we write $q' := q^p$ and $y' := (p)_{q}y$ and we have therefore
\[
\forall n \in \mathbb N, \quad \xi^{(n)_{q',y'}} := \prod_{i=0}^{n-1}\left(\xi + (i)_{q^p}(p)_{q}y\right) =\prod_{i=0}^{n-1}\left(\xi + (pi)_{q}y\right) = \prod_{i=0}^{n-1}\left(x + \xi -q^{pi}x\right).
\]

%%%%%%%%%%%%%%%%%%%%%%%%
\begin{dfn}
The \emph{$p$-Frobenius coefficients} are the polynomials
\[
A_{n,i} := \sum_{j=0}^n (-1)^{n-j}  t^{\frac {p(n-j)(n-j-1)}2}   {n \choose j}_{t^p}  {pj \choose i}_{t} \in \mathbb Z[t].
\]
\end{dfn}

\begin{rmk}
We will show later that $A_{n,i} = 0$ unless $n \leq i \leq pn$ but we may observe right now that $A_{n,pn} =  1$ and that $A_{n,i} = 0$ for $i > pn$.
\end{rmk}

From now on, we will often omit the index in the $p$-Frobenius maps and simply write $F^*$.

%%%%%%%%%%%%%%%%%%
\begin{prop}
If $F^*$ is a  $p$-Frobenius on $A$, then we have
\[
\forall n \in \mathbb N, \quad F^*(\xi^{(n)_{q',y'}}) = \sum_{i=0}^{pn} A_{n,i}(q) x^{pn-i}\xi^{(i)_{q,y}}
\]
where the $A_{n,i}$ are the $p$-Frobenius coefficients.
\end{prop}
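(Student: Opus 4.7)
The plan is to apply $F^*$ directly to the product representation of $\xi^{(n)_{q',y'}}$, re-expand with the twisted binomial formula \eqref{bino1} at parameter $q^p$, and then rewrite each factor $(x+\xi)^{pj}$ in the basis $\{\xi^{(i)_{q,y}}\}_{i\ge 0}$ of $A[\xi]$ provided by Lemma \ref{bas}. Since $y' = (p)_{q}y = (1-q^p)x$, the computation displayed just before the statement gives $\xi^{(n)_{q',y'}} = \prod_{i=0}^{n-1}(x+\xi - q^{pi}x)$. By Definition \ref{frobd}, $F^*$ sends $x$ to $x^p$ and $\xi$ to $(x+\xi)^p - x^p$, hence $x+\xi$ to $(x+\xi)^p$, while fixing the scalar $q \in R$. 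Therefore
\[ F^*(\xi^{(n)_{q',y'}}) = \prod_{i=0}^{n-1}\bigl((x+\xi)^p - q^{pi}x^p\bigr). \]

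Applying \eqref{bino1} with $q$ replaced by $q^p$, with $z_1 := -x^p$ and $z_2 := (x+\xi)^p$, and then re-indexing $j := n-k$ (using the symmetry $\binom{n}{n-j}_{q^p} = \binom{n}{j}_{q^p}$), this becomes
\[ F^*(\xi^{(n)_{q',y'}}) = \sum_{j=0}^n (-1)^{n-j}\, q^{\frac{p(n-j)(n-j-1)}{2}} \binom{n}{j}_{q^p} x^{p(n-j)} (x+\xi)^{pj}. \]

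The crux of the argument is the auxiliary expansion
\[ (x+\xi)^m = \sum_{i=0}^m \binom{m}{i}_q\, x^{m-i}\, \xi^{(i)_{q,y}} \qquad \text{in } A[\xi], \]
which I would prove by induction on $m$. The inductive step rests on two ingredients: first, the recursion $(x+\xi)\,\xi^{(n)_{q,y}} = \xi^{(n+1)_{q,y}} + q^n x\,\xi^{(n)_{q,y}}$, which follows from $\xi\,\xi^{(n)} = \xi^{(n+1)} - (n)_{q} y\,\xi^{(n)}$ (recorded in the remark after Lemma \ref{funmul}) combined with $x - (n)_{q} y = q^n x$ (valid because $y = (1-q)x$ gives $(n)_{q} y = (1 - q^n)x$); second, the $q$-Pascal relation $\binom{m+1}{i}_q = \binom{m}{i-1}_q + q^i \binom{m}{i}_q$ from the Notations section.

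Substituting $m = pj$ in the auxiliary identity and interchanging the two summations yields
\[ F^*(\xi^{(n)_{q',y'}}) = \sum_{i=0}^{pn} \left[\sum_{j=0}^n (-1)^{n-j}\, q^{\frac{p(n-j)(n-j-1)}{2}} \binom{n}{j}_{q^p} \binom{pj}{i}_q\right] x^{pn-i}\, \xi^{(i)_{q,y}}, \]
where the terms with $i > pj$ vanish since then $\binom{pj}{i}_q = 0$. The bracketed coefficient is precisely $A_{n,i}(q)$ by definition, giving the announced formula. The only genuinely non-routine step is discovering and proving the auxiliary expansion of $(x+\xi)^m$ in the $\xi^{(i)_{q,y}}$ basis; once it is in hand, the rest is bookkeeping combining two applications of \eqref{bino1}.
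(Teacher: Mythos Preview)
Your proof is correct and follows essentially the same route as the paper: apply the twisted binomial formula \eqref{bino1} at parameter $q^p$ to expand the product $\prod_{i=0}^{n-1}((x+\xi)^p - q^{pi}x^p)$, then expand each $(x+\xi)^{pj}$ in the basis of twisted powers. Your ``auxiliary expansion'' $(x+\xi)^m = \sum_{i=0}^m \binom{m}{i}_q x^{m-i}\xi^{(i)_{q,y}}$ is precisely Lemma~\ref{lembin}, which the paper states and proves (by the same induction you outline) immediately after this proposition; the only cosmetic difference is that the paper takes $z_1 = x^p$, $z_2 = -(x+\xi)^p$ whereas you take $z_1 = -x^p$, $z_2 = (x+\xi)^p$, which amounts to the same computation after re-indexing.
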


\begin{proof}
Using the twisted binomial formula \eqref{bino1} in the case $z_{1} = x^p$ and $z_{2} = -(\xi + x)^p$ (with $q^p$ instead of $q$), we have
\begin{eqnarray*}
F^*(\xi^{(n)_{q',y'}}) & =& \prod_{i=0}^{n-1}\left((x+ \xi)^p-q^{pi}x^p\right) = 
\\
& = &\sum_{j=0}^n (-1)^{n-j} q^{\frac {p(n-j)(n-j-1)}2} {n \choose j}_{q^p}   x^{p(n-j)}(x + \xi)^{pj}.
\end{eqnarray*}
Using lemma \ref{lembin} below, we obtain
\begin{eqnarray*}
F^*(\xi^{(n)_{q',y'}}) &=& \sum_{j=0}^n (-1)^{n-j} q^{\frac {p(n-j)(n-j-1)}2} {n \choose j}_{q^p}  x^{p(n-j)} \sum_{i=0}^{pj} {pj \choose i}_{q} x^{pj-i}\xi^{(i)_{q,y}}
\\
&=& \sum_{i=0}^{pn} \left(\sum_{j=0}^n (-1)^{n-j}  q^{\frac {p(n-j)(n-j-1)}2}   {n \choose j}_{q^p}  {pj \choose i}_{q} \right) x^{pn-i}\xi^{(i)_{q,y}}.\quad \Box
\end{eqnarray*}\end{proof}

%%%%%%%%%%%
\begin{lem} \label{lembin}
We have for all $m \in \mathbb N$,
\begin{equation} \label{binalg}
(x + \xi)^{m} = \sum_{i=0}^{m} {m \choose i}_{q} x^{m-i}\xi^{(i)}.
\end{equation}
\end{lem}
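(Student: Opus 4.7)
The plan is to prove \eqref{binalg} by induction on $m$. The base case $m=0$ gives $1 = \binom{0}{0}_{q} x^{0} \xi^{(0)}$, which is clear. For the inductive step, I will multiply the inductive hypothesis by $(x+\xi)$ and use two ingredients: first the commutation relation
\[
\xi\,\xi^{(i)} = \xi^{(i+1)} - (i)_{q}y\,\xi^{(i)},
\]
which is the $m=1$ case recorded after Lemma \ref{funmul} (or directly from the induction formula \eqref{indform}); and second the defining Pascal-style recurrence $\binom{m+1}{i}_{q} = \binom{m}{i-1}_{q} + q^{i}\binom{m}{i}_{q}$.

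The key simplification comes from the choice $y = (1-q)x$, which gives $(i)_{q}y = (1-q^{i})x$, hence
\[
(x+\xi)\,\xi^{(i)} \;=\; x\,\xi^{(i)} + \xi\,\xi^{(i)} \;=\; x\,\xi^{(i)} + \xi^{(i+1)} - (1-q^{i})x\,\xi^{(i)} \;=\; q^{i}x\,\xi^{(i)} + \xi^{(i+1)}.
\]
Multiplying the identity for $m$ by $(x+\xi)$ and using this relation yields
\[
(x+\xi)^{m+1} = \sum_{i=0}^{m} \binom{m}{i}_{q} q^{i} x^{m+1-i}\xi^{(i)} + \sum_{i=0}^{m}\binom{m}{i}_{q} x^{m-i}\xi^{(i+1)}.
\]
After reindexing the second sum ($i \mapsto i-1$) and collecting coefficients of $x^{m+1-i}\xi^{(i)}$, the boundary terms $i=0$ and $i=m+1$ work out with the conventions $\binom{m}{-1}_{q} = \binom{m}{m+1}_{q} = 0$, and in the interior one recovers exactly $\binom{m+1}{i}_{q}$ by the $q$-Pascal recurrence.

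There is no real obstacle here; the argument is a direct induction, and no appeal to $q$-divisibility is needed since the formulas are polynomial in $q$. As an aside, one could alternatively derive the identity by inverting the twisted binomial formula \eqref{bino1} applied with $z_{1} = -x$ and $z_{2} = x+\xi$ to the expression $\xi^{(i)} = \prod_{j=0}^{i-1}\!\bigl((x+\xi) - q^{j}x\bigr)$, but the inductive proof above is shorter and avoids any $q$-binomial inversion lemma.
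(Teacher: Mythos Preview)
Your proof is correct and follows essentially the same route as the paper: induction on $m$, the key relation $(x+\xi)\,\xi^{(i)} = \xi^{(i+1)} + q^{i}x\,\xi^{(i)}$, then reindexing and the $q$-Pascal recurrence. The only cosmetic difference is that the paper obtains that relation in one line from the product form $\xi^{(i+1)} = \xi^{(i)}(x+\xi - q^{i}x)$ recalled at the start of Section~\ref{secsix}, whereas you reach it via $\xi\,\xi^{(i)} = \xi^{(i+1)} - (i)_{q}y\,\xi^{(i)}$ together with $y=(1-q)x$.
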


\begin{proof}
We have for all $i \in \mathbb N$, $\xi^{(i+1)} = \xi^{(i)}(x + \xi -q^ix)$ and it follows that $\xi^{(i)}(x + \xi) = \xi^{(i+1)} + q^{i}x\xi^{(i)}$.
Therefore, if the formula holds for some $m$, we will have
 \begin{eqnarray*}
 (x + \xi)^{m+1} &=& \sum_{i=0}^{m} {m \choose i}_{q} x^{m-i}\xi^{(i)}(x + \xi)
\\
 &=& \sum_{i=0}^{m} {m \choose i}_{q} x^{m-i}(\xi^{(i+1)} + q^ix\xi^{(i)})
\\
&=& \sum_{i=1}^{m+1} {m \choose i-1}_{q} x^{m-i+1}\xi^{(i)} + \sum_{i=0}^{m} {m \choose i}_{q} q^ix^{m+1-i}\xi^{(i)}
\\
&=& \sum_{i=0}^{m+1} \left({m \choose i-1}_{q} + q^i{m \choose i}_{q} \right) x^{m+1-i}\xi^{(i)}
\\
&= &\sum_{i=0}^{m+1} {m+1 \choose i}_{q} x^{m-i}\xi^{(i)}.\qedhere
\end{eqnarray*}\end{proof}

As a particular case of the proposition, we have the following:

%%%%%%%%%%%%%%%%%%
\begin{cor}
If $F^*$ is a $p$-Frobenius on $A$, then we have
\[
F^*(\xi) = \sum_{i=1}^{p} {p \choose i}_{q} x^{p-i}\xi^{(i)}. \quad  \Box
\]
\end{cor}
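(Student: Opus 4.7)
The plan is to derive the corollary as the special case $n=1$ of the preceding proposition, then simplify the Frobenius coefficients $A_{1,i}$ by direct computation. Since $\xi^{(1)_{q',y'}} = \xi$ (by definition of $\xi^{(1)}$, independent of the parameters), the proposition gives
\[
F^*(\xi) = \sum_{i=0}^{p} A_{1,i}(q)\, x^{p-i}\xi^{(i)_{q,y}}.
\]
So the whole task reduces to showing $A_{1,0}(q) = 0$ and $A_{1,i}(q) = \binom{p}{i}_q$ for $1 \le i \le p$.

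For the computation of $A_{1,i}$, I would expand the defining sum over $j \in \{0,1\}$. The $j=1$ term is $t^0 \binom{1}{1}_{t^p}\binom{p}{i}_t = \binom{p}{i}_t$. The $j=0$ term is $-\binom{1}{0}_{t^p}\binom{0}{i}_t$, which vanishes unless $i=0$, in which case it contributes $-1$. Since $\binom{p}{0}_t = 1$, the two contributions cancel at $i=0$, giving $A_{1,0} = 0$ and $A_{1,i} = \binom{p}{i}_t$ for $i \ge 1$. Substituting back and specializing $t$ to $q$ yields the announced formula.

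As a sanity check (and an alternative route, which might be the cleanest way to present the argument), one may avoid invoking the proposition altogether: by Definition \ref{frobd}, $F^*(\xi) = (x+\xi)^p - x^p$, and Lemma \ref{lembin} with $m=p$ gives
\[
(x+\xi)^p = \sum_{i=0}^{p} \binom{p}{i}_q x^{p-i}\xi^{(i)},
\]
whose $i=0$ term is exactly $x^p$; subtracting yields the sum from $i=1$ to $p$. There is no real obstacle here: the only subtlety is keeping the two sets of parameters $(q,y)$ and $(q',y')$ straight and noting that they collapse harmlessly when $n=1$.
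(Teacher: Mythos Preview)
Your proposal is correct and follows the paper's approach exactly: the paper presents the corollary with no proof beyond the remark ``As a particular case of the proposition,'' and you carry out precisely that specialization to $n=1$, supplying the short computation of $A_{1,i}$ that the paper leaves implicit. Your alternative route via the definition $F^*(\xi)=(x+\xi)^p-x^p$ together with Lemma~\ref{lembin} is equally valid and arguably the cleanest presentation.
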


As a preparation for the next statement, we prove now the following exchange lemma:

%%%%%%%%%%%%
\begin{lem} \label{techf}
We have for all $m, n \in \mathbb N$,
\[
q^{\frac{n(n-1)}{2}} (1-q)^n (n)_{q}! { m \choose n}_{q}= \sum_{k=0}^n (-1)^{n-k}q^{\frac{k(k-1)}{2}}{n \choose k}_{q}q^{m(n-k)}.
\]
\end{lem}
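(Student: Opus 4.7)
The plan is to recognize the right-hand side as an instance of the twisted binomial formula \eqref{bino1}, then simplify the resulting product to match the left-hand side. The key observation is that specializing $z_1 = 1$ and $z_2 = -q^m$ in \eqref{bino1} gives exactly
\[
\prod_{i=0}^{n-1}(q^i - q^m) = \sum_{k=0}^n q^{\frac{k(k-1)}{2}} {n \choose k}_q (-q^m)^{n-k} = \sum_{k=0}^n (-1)^{n-k} q^{\frac{k(k-1)}{2}} {n \choose k}_q q^{m(n-k)},
\]
which is precisely the RHS of the identity.

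Next, I would factor $q^i$ out of each term of the product:
\[
\prod_{i=0}^{n-1}(q^i - q^m) = q^{\frac{n(n-1)}{2}} \prod_{i=0}^{n-1}(1 - q^{m-i}).
\]
The matching power $q^{n(n-1)/2}$ already agrees with the corresponding factor on the LHS.

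Then I would use the standard identity $1 - q^k = (1-q)(k)_q$ applied to each factor $1 - q^{m-i}$ to extract the expected $(1-q)^n$:
\[
\prod_{i=0}^{n-1}(1 - q^{m-i}) = (1-q)^n \prod_{i=0}^{n-1}(m-i)_q = (1-q)^n (m)_q (m-1)_q \cdots (m-n+1)_q.
\]
Finally, I would recognize the descending $q$-factorial as $(n)_q! {m \choose n}_q$, via the usual formula ${m \choose n}_q = (m)_q! / ((n)_q! (m-n)_q!)$, which yields the LHS.

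There is no serious obstacle; the only slight subtlety is the case $m < n$, but then the factor $(0)_q = 1 - q^0 = 0$ appears in both the product and in $(n)_q! {m \choose n}_q$ (since ${m \choose n}_q = 0$), so both sides vanish and the identity remains valid.
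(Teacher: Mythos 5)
Your proof is correct and is essentially identical to the paper's: the same specialization $z_1=1$, $z_2=-q^m$ of the twisted binomial formula, the same factoring of $q^{n(n-1)/2}$ and $(1-q)^n$, and the same identification of $\prod_{i=0}^{n-1}(m-i)_q$ with $(n)_q!{m \choose n}_q$. Your extra remark on the case $m<n$ is a harmless addition.
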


It means in particular that the right hand side is zero unless $m \geq n$.

\begin{proof}
Using the twisted binomial formula \eqref{bino1} for $z_{1}= 1$ and $z_{2}=-q^{m}$, we get
\begin{eqnarray*}
\sum_{k=0}^n (-1)^{n-k}q^{\frac{k(k-1)}{2}}{n \choose k}_{q}q^{m(n-k)} & = & \prod_{k=0}^{n-1}(q^{k}-q^{m})
\\
&=& \prod_{k=0}^{n-1}q^{k}\prod_{k=0}^{n-1}(1 - q^{m-k})
\\
&=& q^{\frac{n(n-1)}{2}} (1 - q)^n \prod_{k=0}^{n-1}(m-k)_{q}
\\
&=&q^{\frac{n(n-1)}{2}} (1-q)^n (n)_{q}! { m \choose n}_{q}.\qedhere
\end{eqnarray*}\end{proof}

%%%%%%%%%%%%%%%%%%%%%%%%%
\begin{lem}\label{techf2}
Given $n, i \in \mathbb N$, we have $A_{n,i} = 0$ unless $n \leq i \leq pn$ in which case
\[
q^{\frac{i(i-1)}{2}}(1-q)^{i-n}(i)_{q}!A_{n,i}(q) = (p)_q^n (n)_{q^p}!q^{p\frac{n(n-1)}{2}}\sum_{l=0}^{i-n} (-1)^{i-n+l} q^{\frac {l(l-1)}2} {i \choose l}_q {i-l \choose n}_{q^p}.
\]
\end{lem}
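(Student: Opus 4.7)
The plan is to apply Lemma \ref{techf} twice: first to rewrite the outer $q$-binomial $\binom{pj}{i}_q$ appearing in the definition of $A_{n,i}$, then, after swapping summation orders, to recognize the resulting inner sum as another instance of the same lemma (with $q$ replaced by $q^p$). As is customary in this section, I first work in $\mathbb{Q}(t)$ with $q = t$, so that all $q$-factorials and $(1-q)$ are invertible; the final identity will then descend to $\mathbb{Z}[t]$ because both sides are polynomials.

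First I apply Lemma \ref{techf} with $n \leftsquigarrow i$ and $m \leftsquigarrow pj$ to get
\[
q^{\frac{i(i-1)}{2}}(1-q)^i (i)_q!\,\binom{pj}{i}_q = \sum_{k=0}^i (-1)^{i-k} q^{\frac{k(k-1)}{2}} \binom{i}{k}_q q^{pj(i-k)}.
\]
Substituting this into the definition of $A_{n,i}$ and exchanging the order of summation yields
\[
q^{\frac{i(i-1)}{2}}(1-q)^i (i)_q! A_{n,i}
= \sum_{k=0}^i (-1)^{i-k} q^{\frac{k(k-1)}{2}} \binom{i}{k}_q \, T_{i-k},
\]
where, writing $q' := q^p$,
\[
T_m := \sum_{j=0}^n (-1)^{n-j} (q')^{\frac{(n-j)(n-j-1)}{2}} (q')^{jm} \binom{n}{j}_{q'}.
\]

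Second, for the inner sum I change variables $j \mapsto n-j$ and use the symmetry $\binom{n}{n-j}_{q'} = \binom{n}{j}_{q'}$. After factoring out a global sign $(-1)^n$, Lemma \ref{techf} (applied with $q \leftsquigarrow q'$, $n \leftsquigarrow n$, $m \leftsquigarrow m$) identifies
\[
T_m = (-1)^n (q')^{\frac{n(n-1)}{2}}(1-q')^n (n)_{q'}! \binom{m}{n}_{q'}.
\]
Since $1-q^p = (p)_q(1-q)$, we have $(1-q')^n = (p)_q^n (1-q)^n$. Plugging back in (with $m = i-k$), renaming $k = l$, and noting that $\binom{i-l}{n}_{q'}$ vanishes for $l > i-n$, I get
\[
q^{\frac{i(i-1)}{2}}(1-q)^i(i)_q! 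A_{n,i}
= (-1)^n (p)_q^n (1-q)^n q^{p\frac{n(n-1)}{2}}(n)_{q^p}! \sum_{l=0}^{i-n}(-1)^{i-l} q^{\frac{l(l-1)}{2}} \binom{i}{l}_q \binom{i-l}{n}_{q^p}.
\]
The parity relation $(-1)^{n+i-l} = (-1)^{i-n+l}$ and division by $(1-q)^n$ give exactly the stated identity (which is an identity in $\mathbb{Z}[t]$ since both sides are polynomials that agree in $\mathbb{Q}(t)$).

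For the vanishing claim, $A_{n,i} = 0$ for $i > pn$ was already observed before the proposition. For $i < n$ the sum $\sum_{l=0}^{i-n}$ on the right is empty, so the RHS of the identity is $0$; since $q^{i(i-1)/2}(1-q)^{i-n}(i)_q!$ is a unit in $\mathbb{Q}(t)$, this forces $A_{n,i}(t) = 0$ in $\mathbb{Q}(t)$, hence as a polynomial in $\mathbb{Z}[t]$. The only real bookkeeping obstacle is keeping the signs and the exponent $\frac{k(k-1)}{2}$ straight under the change of variable $j \mapsto n-j$; everything else is a direct double application of Lemma \ref{techf}.
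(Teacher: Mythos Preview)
Your argument is correct and follows essentially the same route as the paper's proof: both multiply through by $q^{\frac{i(i-1)}{2}}(1-q)^i(i)_q!$, apply Lemma~\ref{techf} once to expand $\binom{pj}{i}_q$, swap the order of summation, apply Lemma~\ref{techf} a second time with $q$ replaced by $q^p$ to collapse the inner sum, and then use $1-q^p=(p)_q(1-q)$ before cancelling $(1-q)^n$. Your explicit change of variable $j\mapsto n-j$ and the descent from $\mathbb{Q}(t)$ to $\mathbb{Z}[t]$ are exactly the bookkeeping that the paper handles more tersely via sign manipulation and the remark that $A_{n,i}$ is a polynomial.
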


\begin{proof}
We will compute
\[
LHS := q^{\frac{i(i-1)}{2}}(1-q)^i(i)_{q}!A_{n,i}(q).
\]
In order to do that, we use lemma \ref{techf} twice (with $q^p$ instead of $q$ the second time):
\begin{eqnarray*}
LHS &=& q^{\frac{i(i-1)}{2}}(1-q)^i(i)_{q}!\sum_{k=0}^n (-1)^{n-k}  q^{\frac {p(n-k)(n-k-1)}2}   {n \choose k}_{q^p}  {pk \choose i}_{q}
\\
&=& \sum_{k=0}^n (-1)^{n-k}  q^{\frac {p(n-k)(n-k-1)}2}   {n \choose k}_{q^p} \left(q^{\frac{i(i-1)}{2}}(1-q)^i(i)_{q}! {pk \choose i}_{q}\right)
\\
&=&\sum_{k=0}^n(-1)^{n-k}q^{p\frac{(n-k)(n-k-1)}{2}}{n \choose k}_{q^p}\left( \sum_{l=0}^i (-1)^{i-l}{i \choose l}_q q^{\frac {l(l-1)}2}q^{pk(i-l)}\right)
\\
&=&\sum_{l=0}^i (-1)^{i-n+l}{i \choose l}_q q^{\frac {l(l-1)}2} \left(\sum_{k=0}^n(-1)^{k}q^{p\frac{(n-k)(n-k-1)}{2}}{n \choose k}_{q^p}q^{pk(i-l)}\right).
\\
&=& \sum_{l=0}^{i-n}(-1)^{i-n+l}{i \choose l}_q q^{\frac {l(l-1)}2} \left(q^{p\frac{n(n-1)}{2}} (1-q^p)^n (n)_{q^p}! {i-l \choose n}_{q^p}\right)
\\
&=& (p)_q^n (n)_{q^p}!q^{p\frac{n(n-1)}{2}}(1-q)^n\sum_{l=0}^{i-n} (-1)^{i-n+l} q^{\frac {l(l-1)}2} {i \choose l}_q {i-l \choose n}_{q^p}
\end{eqnarray*}
since $1 - q^p = (1-q)(p)_{q}$.
When $i < n$, the right hand side is zero.
Since $A_{n,i}$ is a polynomial in $q$, it has to be zero too.
Otherwise, we obtain the expected equality by moving $(1-q)^n$ to the left hand side.
\end{proof}

\begin{rmk}
In particular, we have
\[
(n)_{q}!A_{n,n} (q)  = (p)_{q}^n(n)_{q^p}!q^{(p-1)\frac{n(n-1)}{2}}.
\]
\end{rmk}

%%%%%%%%%%%%%%%%%%%%%%%%%
\begin{prop}\label{adolfo}
Given $n, i \in \mathbb N$, there exists a unique $B_{n,i} \in \mathbb Z[t]$ such that
\[
(i)_{t}!A_{n,i} = (n)_{t^p}!(p)_{t}^n B_{n,i}(t),
\]
where $A_{n,i}$ denote the $p$-Frobenius coefficient.
We have $B_{n,i}  = 0$ unless $n \leq i \leq pn$ with extreme values
\[
B_{n,n}(q) = q^{\frac{(p-1)n(n-1)}{2}} \quad \mathrm{and} \quad B_{n,pn}(q) = \prod_{k=1}^n
\prod_{i=1}^{p-1} (kp-i)_{q}.
\]
\end{prop}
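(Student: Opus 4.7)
The plan is to extract $B_{n,i}$ from the formula of Lemma \ref{techf2} as an element of a localization of $\mathbb Z[t]$, then prove its integrality by exploiting the UFD structure of $\mathbb Z[t]$ rather than by a direct calculation with the sum appearing there. Uniqueness is immediate since $\mathbb Z[t]$ is a domain and $(n)_{t^p}!(p)_t^n$ is nonzero. The vanishing $B_{n,i}=0$ for $i<n$ follows from Lemma \ref{techf2} itself (which gives $A_{n,i}=0$ in that range), and for $i>pn$ from the remark preceding it; so one may assume $n \leq i \leq pn$.

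For such $i$, write $S_{n,i}(t) := \sum_{l=0}^{i-n}(-1)^{i-n+l}t^{l(l-1)/2}{i \choose l}_t{i-l \choose n}_{t^p}$ and, rearranging Lemma \ref{techf2} inside the localization $\mathbb Z[t][t^{-1},(1-t)^{-1}]$, define
\[
B_{n,i}(t) := t^{pn(n-1)/2-i(i-1)/2}(1-t)^{n-i}S_{n,i}(t).
\]
This automatically satisfies the desired equation $(i)_t!A_{n,i}(t)=(n)_{t^p}!(p)_t^n B_{n,i}(t)$, and the crux is to show $B_{n,i}(t)\in\mathbb Z[t]$. Since $B_{n,i}(t)$ lies in $\mathbb Z[t][t^{-1},(1-t)^{-1}]$, any denominator in a lowest-terms representation is a product of powers of the primes $t$ and $(1-t)$ of the UFD $\mathbb Z[t]$. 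From $(i)_t!A_{n,i}(t)\in\mathbb Z[t]$ such a denominator must divide $(n)_{t^p}!(p)_t^n$ in $\mathbb Z[t]$; but the latter evaluates to $1$ at $t=0$ and to $n!\,p^n$ at $t=1$, so neither $t$ nor $(1-t)$ divides it. Hence the denominator is a unit, and $B_{n,i}(t)\in\mathbb Z[t]$.

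The extreme values are now direct. The remark following Lemma \ref{techf2} reads $(n)_t!A_{n,n}(t)=(p)_t^n(n)_{t^p}!\,t^{(p-1)n(n-1)/2}$, giving $B_{n,n}(t)=t^{(p-1)n(n-1)/2}$. For $B_{n,pn}$, since $A_{n,pn}=1$ one needs to compute $(pn)_t!/\bigl((n)_{t^p}!(p)_t^n\bigr)$. The elementary identity $(kp)_t=(k)_{t^p}(p)_t$, which follows from $(t^{kp}-1)/(t-1)=\bigl((t^p)^k-1\bigr)/(t^p-1)\cdot(t^p-1)/(t-1)$, allows one to factor
\[
\prod_{m=1}^{pn}(m)_t = \prod_{k=1}^n(kp)_t\cdot\prod_{k=1}^n\prod_{j=1}^{p-1}(kp-j)_t = (p)_t^n(n)_{t^p}!\prod_{k=1}^n\prod_{j=1}^{p-1}(kp-j)_t,
\]
yielding the claimed formula. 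The one genuinely delicate step is the integrality argument, where the UFD/coprimality approach bypasses what would otherwise be an elaborate direct verification that $S_{n,i}(t)$ vanishes at $t=0$ and $t=1$ to precisely the orders needed to cancel the denominators in the candidate expression.
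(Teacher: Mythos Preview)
Your proof is correct and follows essentially the same approach as the paper's. The paper's proof is terser, but the content is identical: it observes that any nonzero $t$-integer or $t^p$-integer is prime to both $t$ and $1-t$ (which is exactly your evaluation check at $t=0$ and $t=1$), and concludes from Lemma~\ref{techf2} that $(n)_{t^p}!(p)_t^n$ divides $(i)_t!A_{n,i}$ in $\mathbb Z[t]$; the extreme values are computed via the remark preceding the proposition and the same factorization $(pn)_t! = (p)_t^n(n)_{t^p}!\prod_{k=1}^n\prod_{j=1}^{p-1}(kp-j)_t$ that you derive.
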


\begin{proof}
Any non zero $t$-integer or $t^p$-integer is prime to both $1-t$ and $t$.
The first assertion therefore follows from lemma \ref{techf2}.
The precise values in the case $i = n$ and $i = pn$ are obtained from the remark before the proposition and from the fact that $A_{n,pn} = 1$ since
\begin{eqnarray*}
(pn)_{q}! &=& \prod_{k=1}^n \prod_{i=0}^{p-1} (kp-i)_{q}
\\
&=& \prod_{k=1}^n (kp)_{q} \prod_{k=1}^n \prod_{i=1}^{p-1} (kp-i)_{q}
\\
&=& (p)_{q}^n(n)_{q^p}! \prod_{k=1}^n \prod_{i=1}^{p-1} (kp-i)_{q}
\end{eqnarray*}
because $(kp)_{q} = (k)_{q^p}(p)_{q}$ for each $k$.
\end{proof}

\begin{xmp}
\begin{enumerate}
\item
We have $B_{1,1}(q) = 1$, $B_{1,2}(q) = (p-1)_{q}$, $B_{2,2}(q) = q^{p-1}$, $B_{3,3}(q) = q^{3(p-1)}$.
\item When $R$ has positive $q$-characteristic $p$ and $1 \leq n \leq p$, we have
\[
(1-q)^{p-n}B_{n,p}(q) = (-1)^{n-1} {p \choose n}.
\]
For example, if we write $j = \frac {1 + \sqrt{-3}}2$, we obtain $ (1-j)B_{2,3}(j) = - 3$ and therefore $B_{2,3}(j) = j^2 -1$.
\end{enumerate}
\end{xmp}

%%%%%%%%%%%%%%%%%%%
\begin{dfn}
Let $F^*$ be a $p$-Frobenius on $A$.
Then,
\begin{enumerate}
\item
the \emph{divided $p$-Frobenius coefficients} are the polynomials $B_{n,i}$ of proposition \ref{adolfo},
\item
the \emph{divided $p$-Frobenius map} is the unique $F^*$-linear map $A'\langle\omega\rangle_{q^p,y} \to A\langle\xi\rangle_{q,y}$ such that
\[
\forall n \in \mathbb N, \quad [F^*](\omega^{[n]}) = \sum_{i=n}^{pn} B_{n,i}(q) x^{pn-i}\xi^{[i]}.
\]
\end{enumerate}
\end{dfn}

\begin{rmk}
\begin{enumerate}
\item
As a particular case of this definition, we have
\[
[F^*](\omega) = \sum_{i=1}^{p} {(p-1)_{q}}\cdots (p-i+1)_{q}x^{p-i}\xi^{[i]}.
\]
In more fancy terms, the $i$th coefficient is $(i-1)_{q}!{p-1 \choose i -1}_{q}$.
\item The divided $p$-Frobenius map is continuous.
More precisely, it is compatible with the ideal filtration and induces $F^*$-linear maps
\[
A'\langle \omega \rangle/I_{\omega}^{[n+1]} \to A\langle \xi \rangle/I_{\xi}^{[n+1]}.
\]
\item We may extend the divided Frobenius map by linearity and obtain an $A$-linear map
\[
A\langle\omega\rangle_{q^p,(1-q)x^p} \to A\langle\xi\rangle_{q,y}
\]
given by the same formula (we have $F^{*}(y) = (1-q)x^p$).
\end{enumerate}
\end{rmk}
%%%%%%%%%%%%%%%
\begin{lem} \label{subs}
Let $F^*$ be a $p$-Frobenius on $A$.
Then, under the canonical map $A[\xi] \to A\langle \xi \rangle_{q,y}$, we have for all $n \in \mathbb N$,
\[
F^*(\xi^{(n)_{q',y'}}) \mapsto (n)_{q^p}!(p)_{q}^n  [F^*](\omega^{[n]_{q^p,y}}).
\]
\end{lem}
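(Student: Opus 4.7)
The plan is to combine three ingredients already proved: the expansion of $F^*(\xi^{(n)_{q',y'}})$ in terms of the $p$-Frobenius coefficients $A_{n,i}$, the canonical map $\xi^{(i)}\mapsto (i)_q!\,\xi^{[i]}$ from \eqref{dppol}, and the relation $(i)_t!\,A_{n,i}=(n)_{t^p}!(p)_t^n B_{n,i}(t)$ from proposition \ref{adolfo}. Essentially this lemma is a transcription of the formula for $F^*(\xi^{(n)_{q',y'}})$ into the divided power ring, and everything we need has been done upstream.

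Concretely, I would first use the proposition giving
\[
F^*(\xi^{(n)_{q',y'}}) = \sum_{i=0}^{pn} A_{n,i}(q)\, x^{pn-i}\,\xi^{(i)_{q,y}},
\]
and apply the canonical map $A[\xi]\to A\langle\xi\rangle_{q,y}$, which sends $\xi^{(i)_{q,y}}$ to $(i)_q!\,\xi^{[i]}$. This gives
\[
F^*(\xi^{(n)_{q',y'}}) \mapsto \sum_{i=0}^{pn} (i)_q!\,A_{n,i}(q)\, x^{pn-i}\,\xi^{[i]}.
\]
Next, since proposition \ref{adolfo} tells us $A_{n,i}=0$ unless $n\le i\le pn$, and in that range $(i)_q!\,A_{n,i}(q) = (n)_{q^p}!\,(p)_q^n\,B_{n,i}(q)$, the sum collapses to
\[
(n)_{q^p}!\,(p)_q^n \sum_{i=n}^{pn} B_{n,i}(q)\, x^{pn-i}\,\xi^{[i]}.
\]

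Finally, I recognize the sum on the right as exactly the definition of $[F^*](\omega^{[n]_{q^p,y}})$, yielding the desired equality. There is no real obstacle here: the three identities line up termwise, and the main point of the lemma is notational rather than computational — it says that the divided $p$-Frobenius map is the ``divided power avatar'' of the ordinary $p$-Frobenius on twisted powers, with the compatibility factor being precisely $(n)_{q^p}!(p)_q^n$. The only mild subtlety to verify is that the lower range of summation can be truncated at $i=n$, which is guaranteed by the vanishing statement of proposition \ref{adolfo}; this is what ensures both sides are supported on the same set of indices.
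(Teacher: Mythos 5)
Your proof is correct and follows exactly the paper's own argument: expand $F^*(\xi^{(n)_{q',y'}})$ via the $p$-Frobenius coefficients $A_{n,i}$, push through the canonical map $\xi^{(i)}\mapsto (i)_q!\,\xi^{[i]}$, and apply proposition \ref{adolfo} (including the vanishing for $i<n$) to recognize the definition of $[F^*](\omega^{[n]})$. Nothing is missing.
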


\begin{proof}
This is a direct consequence of the definitions.
More precisely, since $A_{n,i} = 0$ for $i < n$, we have $F^*(\xi^{(n)_{q',y'}}) = \sum_{i=n}^{pn} A_{n,i}(q) x^{pn-i}\xi^{(i)_{q,y}}$ and this is sent to
\begin{eqnarray*}
\sum_{i=n}^{pn} A_{n,i}(q) x^{pn-i} (i)_{q}!\xi^{[i]_{q,y}} &=& \sum_{i=n}^{pn} (n)_{q^p}!(p)_{q}^n B_{n,i}(q) x^{pn-i}\xi^{[i]_{q,y}} \\ &=& (n)_{q^p}!(p)_{q}^n  [F^*](\omega^{[n]_{q^p,y}}).\qedhere
\end{eqnarray*}\end{proof}

%%%%%%%%%%%
\begin{prop} \label{gooddf}
If $F^*$ is a $p$-Frobenius on $A$, then the divided Frobenius map
\[[F^*] : A'\langle\omega\rangle_{q^p,y} \to A\langle\xi\rangle_{q,y}
\]
is a homomorphism of rings.
\end{prop}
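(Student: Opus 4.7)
The plan is to reduce the statement to a generic situation where the canonical maps from polynomial rings to twisted divided power polynomial rings are $A$-algebra isomorphisms, and there to identify the transported divided $p$-Frobenius with an obvious polynomial substitution.

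First I would observe that it suffices to prove that the $A$-linear extension $[F^*]_{\mathrm{ext}} : A\langle\omega\rangle_{q^p,(1-q)x^p} \to A\langle\xi\rangle_{q,y}$ discussed in the remark preceding this proposition is a ring homomorphism: it is the base change of $[F^*]$ along $F^*_{A/R}$, and for an $F^*$-linear map between commutative algebras the two formulations are equivalent. Next, since the multiplication in the twisted divided power rings and the formula for $[F^*]_{\mathrm{ext}}$ depend on $q$ and the parameters only through universal polynomials (cf.\ the remark after Definition \ref{dfdp}), the property reduces to the generic case $R = \mathbb{Q}(t)$, $q = t$, $A = R[x]$, $F^*_R = \mathrm{id}$, $F^*_A : x \mapsto x^p$. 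In this situation every positive $q$- and $q^p$-integer is invertible, so by Propositions \ref{bijfond} and \ref{dpring} the canonical maps $A[\omega] \simeq A\langle\omega\rangle_{q^p,(1-q)x^p}$ and $A[\xi] \simeq A\langle\xi\rangle_{q,y}$ are $A$-algebra isomorphisms.

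Transporting $[F^*]_{\mathrm{ext}}$ through these isomorphisms yields an $A$-linear map $\Phi : A[\omega] \to A[\xi]$, and it is equivalent to show that $\Phi$ is an $A$-algebra homomorphism. Combining Lemma \ref{subs} with the proposition computing $F^*(\xi^{(n)_{q',y'}})$ just before it, one obtains
\[
\Phi\bigl(\omega^{(n)_{q^p,(1-q)x^p}}\bigr) \;=\; \frac{1}{(p)_q^n}\,F^*\bigl(\xi^{(n)_{q',y'}}\bigr) \;=\; \frac{1}{(p)_q^n}\prod_{i=0}^{n-1}\bigl((x+\xi)^p - q^{pi}x^p\bigr).
\]
The central step is to absorb one factor of $(p)_q$ into each of the $n$ terms of the product and, using $1 - q^{pi} = (i)_{q^p}(p)_q(1-q)$, to rewrite this as $\prod_{i=0}^{n-1}\bigl(w + (i)_{q^p}(1-q)x^p\bigr) = w^{(n)_{q^p,(1-q)x^p}}$, where $w := \bigl((x+\xi)^p - x^p\bigr)/(p)_q \in A[\xi]$.

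Finally, let $\Psi : A[\omega] \to A[\xi]$ denote the unique $A$-algebra homomorphism sending $\omega$ to $w$. By multiplicativity, $\Psi\bigl(\omega^{(n)_{q^p,(1-q)x^p}}\bigr) = w^{(n)_{q^p,(1-q)x^p}}$, matching $\Phi$. Since $\{\omega^{(n)_{q^p,(1-q)x^p}}\}_{n\ge 0}$ is an $A$-basis of $A[\omega]$ by Lemma \ref{bas} and both $\Phi$ and $\Psi$ are $A$-linear, we conclude $\Phi = \Psi$; in particular $\Phi$ is an $A$-algebra homomorphism, whence so is $[F^*]_{\mathrm{ext}}$, and hence $[F^*]$. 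The main technical obstacle is the careful bookkeeping of the closely related parameters $y$, $y' = (1-q^p)x$, $(1-q)x^p$, and $(1-q^p)x^p$ appearing on either side of the map, and recognising that after dividing by $(p)_q^n$ the polynomial expression for $\Phi(\omega^{(n)})$ collapses into a clean twisted $n$-th power of the single element $w = ((x+\xi)^p - x^p)/(p)_q$.
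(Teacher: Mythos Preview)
Your proof is correct and takes a genuinely different route from the paper's.

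Both you and the paper begin by reducing to the generic situation $R=\mathbb Q(t)$, $q=t$, $A=R[x]$, where the canonical maps from the polynomial rings to the twisted divided power rings are isomorphisms, and both invoke Lemma~\ref{subs}. From there, however, the arguments diverge. The paper verifies the identity $[F^*](\omega^{[m]}\omega^{[n]}) = [F^*](\omega^{[m]})[F^*](\omega^{[n]})$ directly: it expands $\omega^{[m]}\omega^{[n]}$ via the multiplication formula of Proposition~\ref{dpring}, applies $[F^*]$ term by term using $F^*$-linearity and Lemma~\ref{subs}, expands the right-hand side similarly via Lemma~\ref{funmul}, and then matches the two by the combinatorial identity ${m+n-i \choose m}_{q^p}{m+n \choose i}_{q^p} = {m+n \choose n}_{q^p}{n \choose i}_{q^p}$. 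Your argument bypasses this computation entirely: after linearizing and transporting through the canonical isomorphisms, you observe that the resulting $A$-linear map $\Phi : A[\omega] \to A[\xi]$ sends $\omega^{(n)_{q^p,(1-q)x^p}}$ to $w^{(n)_{q^p,(1-q)x^p}}$ with $w = ((x+\xi)^p - x^p)/(p)_q$, and hence coincides with the substitution homomorphism $\omega \mapsto w$. This is more structural and avoids any combinatorial manipulation; the price is the introduction of $w$, which only exists after inverting $(p)_q$, but since you have already passed to $\mathbb Q(t)$ this is harmless. Your approach makes transparent \emph{why} the map is multiplicative, whereas the paper's calculation certifies it.
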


\begin{proof}
We want to check that for all $m, n \in \mathbb N$, we have
\begin{equation} \label{morp}
[F^*](\omega^{[m]_{q^p,y}}\omega^{[n]_{q^p,y}}) = [ F]^*(\omega^{[m]_{q^p,y}}) [F^*](\omega^{[n]_{q^p,y}}).
\end{equation}
Note that it is sufficient to do the case $R = \mathbb Z[t]$, $t = q$ and $A = R[x]$, and then specialize our variables.
In particular, we may assume that $q\mathrm{-char}(R) = 0$ in which case we will identify $A[\xi]$ with $A\langle \xi \rangle_{q,y}$.
Then, this essentially follows from the fact that $F^*$ itself is a ring homomorphism.
But we need to be careful.
By $F^*$-linearity, the left hand side of \eqref{morp} is equal to
\[
LHS = \sum_{i=0}^{\min{(m,n)}} (-1)^iq^{\frac{pi(i-1)}2}{m + n -i \choose m}_{q^p}{m \choose i}_{q^p} (1-q)^ix^{pi}[F^*](\omega^{[m+n-i]_{q^p,y}})
\]
From proposition \ref{subs}, we see that, for all $i \leq m +n$, we have
\[
(i)_{q^p}! {m+ n \choose i}_{q^p} (p)_{q}^i F^*(\xi^{(m+n-i)_{q',y'}}) = (m+n)_{q^p}!(p)_{q}^{n+m} [F^*](\omega^{[m+n-i]_{q^p,y}})
\]
and it follows that
\[
(m+ n)_{q^p}!(p)_{q}^{m+n} LHS = 
\]
\[
\sum_{i=0}^{\min{(m,n)}} (-1)^iq^{\frac{pi(i-1)}2} (i)_{q^p}! {m + n -i \choose m}_{q^p}{m \choose i}_{q^p} {m+ n \choose i}_{q^p} (p)_{q}^i (1-q)^ix^{pi} F^*(\xi^{(m+n-i)_{q',y'}})
\]
On the other hand, if we denote the right hand side of \eqref{morp} by by $RHS$, we have

\begin{eqnarray*}
(m+n)_{q^p}!(p)_{q}^{m+n} RHS &=& {m+n \choose n}_{p^q}(m)_{q^p} !(p)_{q}^m [ F]^*(\omega^{[m]_{q^p,y}}) (n)_{q^p}! (p)_{q}^n [ F]^*(\omega^{[n]_{q^p,y}})
\\
&=& {m+n \choose n}_{q^p} F^*(\xi^{(m)_{q',y'}}) F^*(\xi^{(n)_{q',y'}}) 
\\
&=& {m+n \choose n}_{p^q} F^*(\xi^{(m)_{q',y'}} \xi^{(n)_{q',y'}})
\end{eqnarray*}
and finally
\[
(m+n)_{q^p}!(p)_{q}^{m+n} RHS =
\]
\[
{m+n \choose n}_{q^p} \sum_{i=0}^{\min{(m,n)}} (-1)^i (i)_{q^p}! q^{\frac{pi(i-1)}2}{m \choose i}_{q^p}{n \choose i}_{q^p} (p)_{q}^i(1-q)^ix^{pi}F^*(\xi^{(m+n-i)_{q',y'}}).
\]
Now, we may identify both sides because
\[
{m + n -i \choose m}_{q^p} {m+ n \choose i}_{q^p} = {m+n \choose n}_{q^p} {n \choose i}_{q^p}.\qedhere
\]\end{proof}

%%%%%%%%%%%%%%%%%%%
\begin{prop} \label{frobis}
If $R$ has positive $q$-characteristic $p$ and $F^*$ is a $p$-Frobenius on $A$, then $[F^*]$ induces a homomorphism of $A$-algebras
\[
(A[\xi]/\xi^{(p)_{q,y}}) \langle \omega \rangle_{1, (1-q)x^p} \simeq A[\xi]/\xi^{(p)_{q,y}} \otimes_{A'} A'\langle \omega \rangle_{1, y} \to A \langle \xi \rangle_{q,y}
\]
When $R$ is $q$-divisible, this is an isomorphism.
\end{prop}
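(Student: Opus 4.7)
The plan is to construct the map via the universal property of the tensor product of commutative rings, then verify bijectivity by an upper-triangularity argument on the natural bases.

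First I would identify the two descriptions of the source. Since $q\mathrm{-char}(R) = p$ forces $q^p = 1$, the ring $A'\langle\omega\rangle_{q^p, y}$ of proposition \ref{gooddf} is exactly $A'\langle\omega\rangle_{1, y}$, and the $p$-Frobenius sends the element $y = (1-q)(1 \otimes x) \in A'$ to $(1-q) x^p \in A$. Combined with the base change property of twisted divided power polynomial rings (the remark after definition \ref{dfdp}), this yields the canonical identification
\[
A[\xi]/\xi^{(p)_{q,y}} \otimes_{A'} A'\langle\omega\rangle_{1,y} \;\simeq\; (A[\xi]/\xi^{(p)_{q,y}})\langle\omega\rangle_{1, (1-q)x^p}.
\]

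Next I would build the homomorphism $\Phi$. The canonical map $A[\xi] \to A\langle\xi\rangle_{q,y}$ of proposition \ref{bijfond} sends $\xi^{(p)}$ to $(p)_q! \xi^{[p]} = 0$, so it factors through $A[\xi]/\xi^{(p)}$ and turns $A\langle\xi\rangle_{q,y}$ into an $A[\xi]/\xi^{(p)}$-algebra. The $A'$-structure on this algebra (via $F^*$) coincides with the $A'$-structure on $A\langle\xi\rangle_{q,y}$ induced by the ring homomorphism $[F^*]$ of proposition \ref{gooddf}. The universal property of the tensor product of commutative rings therefore yields a ring homomorphism
\[
\Phi : A[\xi]/\xi^{(p)} \otimes_{A'} A'\langle\omega\rangle_{1, y} \to A\langle\xi\rangle_{q, y}, \qquad \xi^i \otimes \omega^{[n]} \mapsto \xi^i \cdot [F^*](\omega^{[n]}),
\]
which is $A$-linear by construction.

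To prove bijectivity under $q$-divisibility, I would order pairs $(n, i)$ with $n \geq 0$, $0 \leq i < p$ by the value $pn + i$, which puts the index set in bijection with $\mathbb N$. The source is $A$-free with basis $\{\xi^i \otimes \omega^{[n]}\}$ and the target is $A$-free with basis $\{\xi^{[k]}\}_{k \geq 0}$. From the multiplication rule $\xi \cdot \xi^{[k]} = (k+1)_q \xi^{[k+1]} - (k)_q y \xi^{[k]}$, the vanishing $(pn)_q = 0$ (so that $(pn + m)_q = (m)_q$), and the explicit formula $[F^*](\omega^{[n]}) = \sum_{j=n}^{pn} B_{n,j}(q) x^{pn-j} \xi^{[j]}$, one checks that $\Phi(\xi^i \otimes \omega^{[n]})$ lies in the $A$-span of $\{\xi^{[k]} : n \leq k \leq pn + i\}$ and that its coefficient at the top term $\xi^{[pn + i]}$ equals $B_{n, pn}(q) \cdot (i)_q!$. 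By proposition \ref{adolfo} and the identity $(kp - j)_q = (p - j)_q$ in $R$, this diagonal coefficient simplifies to $((p-1)_q!)^n (i)_q!$, which is invertible in $R$ when $R$ is $q$-divisible. Ordering both bases by $pn + i$, the matrix of $\Phi$ is therefore upper triangular with invertible diagonal, so $\Phi$ is an isomorphism.

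The main obstacle is the combinatorial bookkeeping in the final step; the essential simplification is that $(pn)_q = 0$ prevents mixing between the "$n$-blocks" of basis elements and reduces the computation of the leading coefficient to the product $B_{n,pn}(q)(i)_q!$, which proposition \ref{adolfo} makes manifestly invertible.
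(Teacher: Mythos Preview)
Your argument is correct and follows essentially the same route as the paper: construct the map from the factorization $\xi^{(p)}\mapsto (p)_q!\,\xi^{[p]}=0$ together with the ring homomorphism $[F^*]$ of proposition~\ref{gooddf}, then prove bijectivity by a filtration/degree argument showing the leading coefficient in degree $pn+i$ is $B_{n,pn}(q)\cdot(i)_q! = ((p-1)_q!)^n(i)_q!$, a unit under $q$-divisibility. Your write-up is in fact slightly more explicit than the paper's, which records the graded image simply as $B_{n,pn}(q)\xi^{[pn+k]}$ without isolating the $(k)_q!$ factor, and which does not spell out the base-change identification of the two descriptions of the source.
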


\begin{proof}
There exists such a map because $\xi^{(p)}$ is sent to $(p)_{q}\xi^{[p]} = 0$.
Moreover, when $R$ is $q$-divisible, this map induces a bijection between basis on both sides as we can easily check.
More precisely, one may define a notion of degree on both sides by setting $\deg(\xi^{[n]}) = n$ on the right hand side and $\deg (\overline \xi^k \omega^{[n]}) = k+pn$ when $k < p$ on the left hand side.
By definition, this homomorphism preserves the degrees and it is therefore sufficient to prove that it induces a bijection on the associated graded modules.
But then, $\overline \xi^k \omega^{[n]}$ is sent to $B_{n,pn}(q) \xi^{[pn+k]}$ and one has
\[
B_{n,pn}(q) = \left((p-1)_{q}!\right)^n \in R^\times
\]
since $R$ is $q$-divisible.
\end{proof}

\begin{rmk}
\begin{enumerate}
\item This homomorphism is continuous.
Actually, it preserves the ideal filtrations.
Note however that it is \emph{not} an isomorphism of filtered modules when $R$ is $q$-divisible: the filtration on the left hand side is usually strictly smaller that the filtration on the right hand side.
\item
It is tempting to introduce a variant of the $p$-Frobenius coefficients by setting $C_{n,i} = B_{n,i}/B_{n,pn} \in \mathbb Q[t]$.
When $R$ is $q$-divisible, $C_{n,i}(q)$ is well defined and satisfies
\[
(i)_{q}!A_{n,i}(q) = (pn)_{q}! C_{n,i}(q).
\]
Then, if we replace $B$'s with $C$'s in the definition of $[F^*]$, the modified version would send monic to monic (this was our approach in \cite{GrosLeStumQuiros10}).
If moreover, we assume that $q\mathrm{-char}(R) = p$, then the modified version of $[F^*]$ would still be a ring homomorphism (but this is not true anymore in general: this is why we had to be careful in the proof of proposition \ref{gooddf}).
\item There exists an intermediate alternative for the coefficients that is only defined when $R$ is $q$-divisible but which is always a ring homomorphism and coincides with the $C$'s when the $q$-characteristic is $p$.
This is obtained by dividing out $B_{n,i}(q)$ by $((p-1)_{q}!)^n$.
\end{enumerate}
\end{rmk}

%%%%%%%%%%%%%%%%%%%%
%%%%%%%%%%%%%%%%%%%%
\section{Twisted Simpson correspondence}

We let $A$ be a twisted $R$-algebra with twisted coordinate $x$ such that $\sigma(x) = qx$.
We fix an endomorphism $F^*_{R}$ of $R$ and let $F^*$ be a $p$-Frobenius on $A$ with respect to $F^*_{R}$ and $x$ for some $p \in \mathbb N \setminus \{0\}$.
We are mostly interested in the case where $R$ is $q$-divisible of positive $q$-characteristic $p$.

\begin{prop}
Assume that $R$ is $q$-divisible of $q$-characteristic $p$.
Then, the divided $p$-Frobenius provides by an $A$-linear map
\[
\Phi_{A/R} : \mathrm D_{A/R,\sigma} \to \mathrm ZA_{A/R,\sigma} \subset \mathrm D_{A/R,\sigma}.
\]
More precisely, we have for all $n \in \mathbb N$,
\begin{equation} \label{formphi}
\Phi(\partial_{\sigma}^n) = \sum_{k=0}^n B_{k,n}(q) x^{pk-n}\partial_{\sigma}^{pk}
\end{equation}
where the $B_{k,n}$ denote the divided $p$-Frobenius coefficients.
\end{prop}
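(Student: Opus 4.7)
The plan is to obtain $\Phi$ by dualizing the divided $p$-Frobenius map $[F^*]$ of Section~\ref{secsix}. First, I would recall the two pairings at play. The pairing introduced before Proposition~\ref{dualcom} makes $A[\theta]$ the continuous $A$-linear dual of $A\langle\langle\omega\rangle\rangle_{1,y^p}$, with $\theta^k$ dual to $\omega^{[k]}$. On the other side, the identification of $\mathrm D_{A/R,\sigma}$ with the twisted Weyl algebra (stated just before Section~6) exhibits $\partial_{\sigma}^n$ as the continuous $A$-linear form on $\widehat{\mathrm P}^{(0)}_{\sigma}=A\langle\langle\xi\rangle\rangle_{q,y}$ sending $\xi^{[n]}\mapsto 1$ and killing the other $\xi^{[m]}$.

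Next, I would extend $[F^*]$ $A$-linearly (using the remark after its definition, and the fact that $q^p=1$ here since $q\mathrm{-char}(R)=p$) to a continuous, filtration-preserving map $A\langle\langle\omega\rangle\rangle_{1,y^p}\to\widehat{\mathrm P}^{(0)}_{\sigma}$. Transposing under the two pairings above yields an $A$-linear map $\mathrm D_{A/R,\sigma}\to A[\theta]$, and post-composing with the twisted $p$-curvature isomorphism $A[\theta]\xrightarrow{\sim}\mathrm ZA_{A/R,\sigma}$ of Proposition~\ref{duacent} produces the map $\Phi$ with values in the centralizer, as desired.

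Finally, formula~\eqref{formphi} falls out by direct unwinding. By construction the image of $\partial_{\sigma}^n$ in $A[\theta]$ equals $\sum_k\langle\partial_{\sigma}^n,[F^*](\omega^{[k]})\rangle\,\theta^k$, and the explicit expression
\[
[F^*](\omega^{[k]})=\sum_{i=k}^{pk}B_{k,i}(q)\,x^{pk-i}\,\xi^{[i]}
\]
immediately gives $\Phi(\partial_{\sigma}^n)=\sum_kB_{k,n}(q)\,x^{pk-n}\,\partial_{\sigma}^{pk}$ after applying $\theta^k\mapsto\partial_{\sigma}^{pk}$; since $B_{k,n}(q)=0$ unless $k\leq n\leq pk$, restricting to $k=0,\ldots,n$ as in the statement is harmless.

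The main technical annoyance will be to reconcile the various parametrizations of the divided power rings $A\langle\omega\rangle_{\ast,\ast}$ that appear: the source of $[F^*]$ after $A$-linear extension is naturally $A\langle\omega\rangle_{q^p,(1-q)x^p}$, while the target of the twisted divided $p$-power map, and hence the $p$-curvature construction of Proposition~\ref{duacent}, uses $A\langle\omega\rangle_{1,y^p}$. One handles this by invoking $q^p=1$ to match the first parameter and by a direct check on the multiplication rule~\eqref{sqform} (together with $q$-divisibility of $R$) to identify the two ring structures on the free $A$-module spanned by the $\omega^{[k]}$, after which $A$-linearity of $\Phi$ and its landing in $\mathrm ZA_{A/R,\sigma}$ are automatic.
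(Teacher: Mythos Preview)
Your approach is essentially the paper's: dualize the (linearized) divided $p$-Frobenius $[F^*]$ to obtain an $A$-linear map $\mathrm D_{A/R,\sigma}\to A[\theta]$, then compose with the $p$-curvature isomorphism $A[\theta]\simeq \mathrm ZA_{A/R,\sigma}$ from Proposition~\ref{duacent}; the explicit formula drops out by reading off the coefficient of $\xi^{[n]}$ in $[F^*](\omega^{[k]})$. The only cosmetic difference is that the paper dualizes the finite-level maps $A\langle\omega\rangle/I_\omega^{[n+1]}\to A\langle\xi\rangle/I_\xi^{[n+1]}$ and passes to the limit, while you work directly with the completions and continuous duals.

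One remark on your final paragraph: the worry about reconciling the ring structures on $A\langle\omega\rangle_{1,y^p}$ versus $A\langle\omega\rangle_{q^p,(1-q)x^p}$ is unnecessary, and the claimed identification is in fact dubious (with $y=(1-q)x$ one has $y^p=(1-q)^p x^p$, not $(1-q)x^p$). Fortunately nothing in the argument uses the ring structure on $A\langle\omega\rangle$: $\Phi$ is only asserted to be $A$-linear, so you only need the underlying free $A$-module on the $\omega^{[k]}$ and its pairing with $A[\theta]$, together with the explicit formula $\theta^k\mapsto\partial_\sigma^{pk}$ for the $p$-curvature map. You can simply drop that paragraph.
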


\begin{proof}
The linearized divided $p$-Frobenius maps $A\langle \omega \rangle/I_{\omega}^{[n+1]} \to A\langle \xi \rangle/I_{\xi}^{[n+1]}$ coming from section \ref{secsix} provide by duality a compatible system of morphisms
\[
\mathrm{Diff}_{n,\sigma}^{(0)}(A) \to A[\theta]_{\leq n}.
\]
We may then use proposition \ref{duacent} in order to identify $A[\theta]$ with the centralizer $\mathrm ZA_{A/R,\sigma}$ of $A$ in $\mathrm D_{A/R,\sigma}$.
By duality, the coefficient of $\partial_{\sigma}^{kp}$ in $\Phi(\partial_{\sigma}^n)$ is the coefficient of $\xi^{[n]}$ in $[F^*](\omega^{[k]})$ which is exactly $B_{k,n} x^{pk-n}$.
\end{proof}

\begin{rmk}
\begin{enumerate}
\item
The morphism $\Phi_{A/R}$ is \emph{not} a ring homomorphism (as we already knew from the case $q = 1$).
\item
If we do not assume that $R$ is $q$-divisible of $q$-characteristic $p$, then we still get a map $\mathrm D_{A/R,\sigma} \to A[\theta]$ given by an analogous formula but we cannot identify the target with $\mathrm ZA_{A/R,\sigma}$.
\item
In formula \eqref{formphi}, the sum actually starts with the smallest integer $k \geq n/p$.
\end{enumerate}
\end{rmk}

\begin{xmp}
\begin{enumerate}
\item
We have $\Phi(\partial_{\sigma}) = x^{p-1}\partial_{\sigma}^p$.
\item
We have $\Phi(\partial_{\sigma}^2) = (p-1)_{q}x^{p-2}\partial_{\sigma}^p + q^{p-1}x^{2p-2}\partial_{\sigma}^{2p}$.
\item When $q\mathrm{-char}(R) = p = 3$, we have
\[
\Phi(\partial_{\sigma}^3) = \partial_{\sigma}^3 + (q^2-1) x^{3}\partial_{\sigma}^{6} + x^{6}\partial_{\sigma}^{9}.
\]
\end{enumerate}
\end{xmp}

Recall from proposition \ref{leseq} that $F^*(A') \subset A^{\partial_{\sigma} = 0}$ if and only the diagram
\[
\xymatrix{A' \ar[r]^{F^*} & A \ar@<2pt>[rr]^-{\mathrm{can}} \ar@<-2pt>[rr]_-{\theta} && \widehat {\mathrm P}_{\sigma}^{(0)}}
\]
commutes.
Under this hypothesis, $[F^*]$ will induce, for all $n \in \mathbb N$, an $F^*$-linear morphism of $R$-algebras
\[
[F^*] : A'\langle \omega \rangle/I_{\omega}^{[n]} \otimes_{A'} A'\langle \omega \rangle/I_{\omega}^{[n]} \to \mathrm P^{(0)}_{(n)_{\sigma}} \otimes'_{A} \mathrm P^{(0)}_{(n)_{\sigma}}.
\]

%%%%%%%%%%%%%%%%%%%
\begin{prop} \label{comdlef}
If $F^*(A') \subset A^{\partial_{\sigma} = 0}$, then the diagram
\[
\xymatrix{
A'\langle \omega \rangle/I_{\omega}^{[n]} \ar[r]^{[F^*]} \ar[d]^\delta & \mathrm P^{(0)}_{(n)_{\sigma}} \ar[d]^\delta
\\
A'\langle \omega \rangle/I_{\omega}^{[n]} \otimes_{A'} A'\langle \omega \rangle/I_{\omega}^{[n]} \ar[r]^-{[F^*]} & \mathrm P^{(0)}_{(n)_{\sigma}} \otimes'_{A} \mathrm P^{(0)}_{(n)_{\sigma}}.
}
\]
is commutative.
\end{prop}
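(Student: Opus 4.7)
The three maps entering the diagram are all ring homomorphisms: $[F^*]$ by proposition \ref{gooddf}, the right-hand $\delta$ on $\mathrm P^{(0)}$ by the proposition after definition \ref{commu}, and the left-hand $\delta$ on $A'\langle\omega\rangle$ by proposition \ref{dualcom}(1) once we observe that our hypothesis $F^*(A')\subset A^{\partial_{\sigma}=0}$ forces $\partial_{\sigma}(x^p)=(p)_{q}x^{p-1}=0$ and hence $q^p=1$. The hypothesis is moreover equivalent, via proposition \ref{leseq}, to the Taylor-compatibility $\Theta\circ F^*=\mathrm{can}\circ F^*$ on $A'$; this allows $[F^*]\otimes[F^*]$ to descend to a well-defined ring homomorphism $A'\langle\omega\rangle\otimes_{A'}A'\langle\omega\rangle \to \mathrm P^{(0)}\otimes'_A\mathrm P^{(0)}$. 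Both compositions in the diagram are thus $F^*$-linear ring homomorphisms.

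Two such ring homomorphisms agree as soon as they agree on an $A'$-algebra generator of the source. To obtain $\omega$ as such a generator, I would reduce to the universal setting $R=\mathbb Q(\zeta_p)$ with $q=\zeta_p$, $A=R[x]$, $A'=R[x']$, and $F^*(x')=x^p$. Here $\mathbb Q\subset R$ and $q^p=1$; combined with the multiplication rule $\omega\cdot\omega^{[n]} = (n+1)\omega^{[n+1]} - ny\,\omega^{[n]}$ (proposition \ref{dpring} specialized to $q^p=1$), this yields $\omega^{[n]} = \omega(\omega+y)\cdots(\omega+(n-1)y)/n!$, so $\omega$ generates $A'\langle\omega\rangle_{1,y}$ as an $A'$-algebra. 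Since the identity to prove is a universal $\mathbb Z$-polynomial identity in $q$, $x$ and the divided-power symbols, its validity over this universal ring implies its validity in the original setting.

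It then remains to verify $\delta([F^*](\omega)) = [F^*](\omega)\otimes' 1 + 1\otimes'[F^*](\omega)$. Writing $[F^*](\omega) = \sum_{i=1}^p a_i\,x^{p-i}\xi^{[i]}$ with $a_i := (i-1)_{q}!\binom{p-1}{i-1}_{q}$, I would expand the left side via $A$-linearity of $\delta$ and $\delta(\xi^{[i]})=\sum_{j+k=i}\xi^{[j]}\otimes'\xi^{[k]}$, and expand the right side using the Taylor identity $1\otimes' z\xi^{[k]} = \Theta(z)\xi^{[k]}\otimes' 1$ together with $\Theta(x^m) = (x+\xi)^m = \sum_j\binom{m}{j}_{q}(j)_{q}!\,x^{m-j}\xi^{[j]}$ (lemma \ref{lembin}). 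Matching coefficients of $\xi^{[j]}\otimes'\xi^{[k]}$ for $j,k\geq 1$ reduces the equality to the $q$-binomial identity $a_{j+k} = a_{k}\binom{p-k}{j}_{q}(j)_{q}!$, which is a routine manipulation of $q$-factorials; the boundary cases $j=0$ or $k=0$ are immediate.

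The main obstacle is the careful bookkeeping of the twisted Taylor structure on $\otimes'$: scalars $x^{p-i}$ sitting on the right of $\otimes'$ produce, via $\Theta(x^{p-i})=(x+\xi)^{p-i}$, sums over all divided powers $\xi^{[j]}$ on the left of $\otimes'$, and these cross-contributions must combine---via the above $q$-binomial identity---to match the simpler left-side expansion, where scalars remain on the left. This combinatorial cancellation is precisely what the specific form of the divided $p$-Frobenius coefficients $B_{n,i}$ is designed to encode.
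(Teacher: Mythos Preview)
Your strategy---observe that both paths around the diagram are ring homomorphisms and then check only on the algebra generator $\omega$---is different from the paper's and is attractive, but the reduction step has a genuine gap.

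The problem is your choice of ``universal'' ring. The commutativity for each fixed $n$ is a family of polynomial identities with coefficients in $\mathbb Z[t]$ (specialized at $t=q$). Proving them over $\mathbb Q(\zeta_p)$ only establishes them modulo the cyclotomic polynomial $\Phi_p(t)$, which for composite $p$ is a \emph{proper} divisor of $(p)_t$. Even granting your deduction of $(p)_q=0$ from the hypothesis (itself needing $x^{p-1}$ regular in $A$ and $R\to A$ injective), the original $R$ is only a $\mathbb Z[t]/((p)_t)$-algebra, not a $\mathbb Z[\zeta_p]$-algebra, so your specialization discards information. The correct target would be $\mathbb Q[t]/((p)_t)$: it still contains $\mathbb Q$ (so $\omega$ generates), $\mathbb Z[t]/((p)_t)$ injects into it, and $q^p=1$ there, so your ring-homomorphism argument goes through. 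With that repair your approach works, at least under the extra assumption $(p)_q=0$.

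For comparison, the paper's proof does not restrict $q$ at all. It computes both sides explicitly for general $n$ and reduces to an identity (Lemma~\ref{cocomp}) which it then proves by the \emph{opposite} generic reduction: pass to a ring where all $q$-integers are invertible (so $q^p\neq 1$), identify divided powers with ordinary twisted powers, and push everything back to $\mathrm P=A\otimes_R A$, where $F^*\otimes F^*$ and the comultiplication $z_1\otimes z_2\mapsto z_1\otimes 1\otimes z_2$ are manifest ring maps. This sidesteps the need for $\omega$ to generate and avoids the constraint $q^p=1$ on which your method depends. Your direct verification for $n=1$ via the identity $a_{j+k}=a_k\binom{p-k}{j}_q(j)_q!$ is correct; it is the transfer from $\mathbb Q(\zeta_p)$ back to the original ring that fails as written.
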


\begin{proof}
We want to prove that we always have
\[
\delta([F^*](\omega^{[n]}) = [F^*](\delta(\omega^{[n]}).
\]
We can compute the left hand side
\[
\delta([F^*](\omega^{[n]}) = \delta\left(\sum_{k=n}^{pn} B_{n,k}(q) x^{pn-k}\xi^{[k]}\right) = \sum_{k=n}^{pn} B_{n,k}(q) x^{pn-k}\delta(\xi^{[k]})
\]
\[
=  \sum_{k=n}^{pn} B_{n,k}(q) x^{pn-k}\left(\sum_{j=0}^k(\xi^{[k-j]} \otimes' \xi^{[j]}\right)
\]
And we can also compute the right hand side
\[
[F^*](\delta(\omega^{[n]}) = [F^*]\left(\sum_{k=0}^n \omega^{[n-k]} \otimes \omega^{[k]}\right) = \sum_{k=0}^n [F^*](\omega^{[n-k]}) \otimes' [F^*](\omega^{[k]}).
\]
Our assertion therefore follows from lemma \ref{cocomp} below.
\end{proof}

%%%%%%%%%%%%%%%%%%%%%%%
\begin{lem} \label{cocomp}
We have
\[
\sum_{k=n}^{pn} B_{n,k}(q) x^{pn-k}\left(\sum_{j=0}^k \xi^{[k-j]} \otimes' \xi^{[j]}\right) = \sum_{k=0}^n [F^*](\omega^{[n-k]}) \otimes' [F^*](\omega^{[k]})
\]
in $\widehat {\mathrm P}^{(0)}_{A/R} \widehat \otimes'_{A} \widehat {\mathrm P}^{(0)}_{A/R}$.
\end{lem}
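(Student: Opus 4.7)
The plan is to recognize the asserted identity as the statement
\[
\delta\bigl([F^*](\omega^{[n]})\bigr) \;=\; \sum_{k=0}^n [F^*](\omega^{[n-k]}) \otimes' [F^*](\omega^{[k]}),
\]
using $A$-linearity of $\delta$ on the left applied to the explicit formula for $[F^*](\omega^{[n]})$. Both sides are $A$-linear combinations of the basis elements $\xi^{[i]}\otimes'\xi^{[j]}$ with coefficients polynomial in $q$ (through the $B_{n,k}$'s and the expansions that appear) and polynomial in $x$, so I would reduce to the universal case $R=\mathbb Q(t)$, $q=t$, $A=R[x]$, in which all positive $q$-integers are invertible and the canonical map $A[\xi]\to A\langle\xi\rangle_{q,y}$ of Proposition \ref{bijfond} is an isomorphism.

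In this universal setting, $[F^*]$ is the $F^*$-linear ring homomorphism $A'[\omega]\to A[\xi]$ sending $\omega$ to $F^*(\xi)/(p)_q = ((x+\xi)^p-x^p)/(p)_q$ and $y_{A'}$ to $(1-q)x^p$. Using $(1-q)(p)_q = 1-q^p$ together with $\omega^{[n]} = \omega(\omega+y_{A'})\cdots(\omega+(n-1)_{q^p}y_{A'})/(n)_{q^p}!$ yields the closed form
\[
[F^*](\omega^{[n]}) \;=\; \frac{1}{(n)_{q^p}!\,(p)_q^n}\prod_{j=0}^{n-1}\bigl((x+\xi)^p - q^{pj}x^p\bigr).
\]

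Setting $\xi_1 = \xi\otimes'1$, $\xi_2 = 1\otimes'\xi$, $X = x^p$, $Y = (x+\xi_1)^p$, $Z = (x+\xi_1+\xi_2)^p$, and using the Taylor-switch rule $1\otimes'x = \Theta(x) = x+\xi_1$ (so that $1\otimes'x^p = Y$ and $1\otimes'(x+\xi)^p = Z$), each factor $(x+\xi)^p - q^{pj}x^p$ becomes $Y - q^{pj}X$ under $\otimes'1$ and $Z - q^{pj}Y$ under $1\otimes'$. Since $\delta$ is a ring homomorphism on $A[\xi]$ with $\delta(x)=x$ and $\delta(\xi)=\xi_1+\xi_2$, the closed form above gives $\delta([F^*](\omega^{[n]})) = \prod_{j=0}^{n-1}(Z-q^{pj}X)/((n)_{q^p}!(p)_q^n)$, while the right-hand side of the lemma equals $\sum_k \prod_{j=0}^{n-k-1}(Y-q^{pj}X)\prod_{j=0}^{k-1}(Z-q^{pj}Y)/((n-k)_{q^p}!(k)_{q^p}!(p)_q^n)$. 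After clearing the common factor $(p)_q^n$ and multiplying by $(n)_{q^p}!$, the lemma reduces to the $q^p$-binomial identity
\[
\prod_{j=0}^{n-1}(Z - q^{pj}X) \;=\; \sum_{k=0}^n \binom{n}{k}_{q^p}\prod_{j=0}^{n-k-1}(Y - q^{pj}X)\prod_{j=0}^{k-1}(Z - q^{pj}Y).
\]

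To verify this identity, expand both sides via the twisted binomial formula \eqref{bino1} and match coefficients of $X^aY^bZ^c$ with $a+b+c=n$. Applying the standard $q$-binomial product identity $\binom{n}{j+c}_{q^p}\binom{j+c}{j}_{q^p} = \binom{n}{j}_{q^p}\binom{n-j}{c}_{q^p}$ reduces the $Y^b$-coefficient on the right to $\binom{n}{c}_{q^p}\binom{a+b}{a}_{q^p}\prod_{i=0}^{b-1}(1-q^{pi})$, which vanishes for $b\geq 1$ (the $i=0$ factor is $0$) and matches the left-hand coefficient $(-1)^aq^{pa(a-1)/2}\binom{n}{a}_{q^p}$ when $b=0$. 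The main obstacle is finding the substitution $X,Y,Z$ and the Taylor-switch identifications that convert the tensor-product identity of the lemma into the combinatorial identity above; once there, the proof is a $q$-binomial computation whose heart is the vanishing of $\prod_{i=0}^{b-1}(1-q^{pi})$.
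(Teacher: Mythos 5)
Your argument is correct, and its skeleton is the same as the paper's: reduce to the generic case ($R=\mathbb Q(t)$, $A=R[x]$), identify $\widehat{\mathrm P}^{(0)}\widehat\otimes'_A\widehat{\mathrm P}^{(0)}$ generically with (a completion of) $A\otimes_R A\otimes_R A$ so that $\delta$ becomes the ring map inserting the middle factor (this is proposition \ref{chanr}), and exploit multiplicativity of $F^*$ through the closed product form of $[F^*](\omega^{[n]})$, which is exactly lemma \ref{subs} (your derivation via proposition \ref{gooddf} gives the same thing and is not circular, since both precede the lemma). Where you diverge is the endgame: the paper disposes of the resulting identity in $A\otimes_R A\otimes_R A$ by citing proposition 3.5 of \cite{LeStumQuiros18} twice (for $\sigma$ and for $\sigma^p$), i.e.\ the addition/comultiplication formula for twisted powers, together with the compatibility of $F^*$ with the three-fold tensor structure; you instead reduce everything to the explicit three-variable identity $\prod_{j=0}^{n-1}(Z-q^{pj}X)=\sum_{k=0}^n\binom{n}{k}_{q^p}\prod_{j=0}^{n-k-1}(Y-q^{pj}X)\prod_{j=0}^{k-1}(Z-q^{pj}Y)$ and verify it by a $q$-binomial coefficient computation. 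That identity is precisely the quoted proposition 3.5 in disguise, so your route is a self-contained, more elementary re-proof of the external ingredient, at the cost of the coefficient check; note only that your displayed coefficient $\binom{n}{c}_{q^p}\binom{a+b}{a}_{q^p}\prod_{i=0}^{b-1}(1-q^{pi})$ omits the prefactor $(-1)^aq^{pa(a-1)/2}$ (which you clearly intend, since you match it for $b=0$), and that reaching it uses, besides the product identity for $q$-binomials, one more rearrangement plus the evaluation $\sum_j(-1)^jq^{pj(j-1)/2}\binom{b}{j}_{q^p}=\prod_{i=0}^{b-1}(1-q^{pi})$, which is again the twisted binomial formula \eqref{bino1}; the vanishing for $b\geq 1$ then goes through exactly as you say.
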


\begin{proof}
Since it is a generic question, we may assume that all $q$-integers are invertible in $R$ and also, if we wish, that $A = R[x]$ is simply the polynomial ring.
Using proposition \ref{chanr} and remark \ref{rem3}) after definition \ref{frobd}, it is therefore sufficient to check the equality
\[
\sum_{k=n}^{pn} A_{n,k}(q) x^{pn-k}\left(\sum_{j=0}^k {k \choose j}_{q}\xi^{(k-j)} \otimes' \xi^{(j)}\right) = \sum_{k=0}^n {n \choose k}_{p^q} F^*(\xi^{(n-k)_{q',y'}}) \otimes' F^*(\xi^{(k)_{q',y'}})
\]
in $\mathrm P \otimes'_{A} \mathrm P = A \otimes_{R} A \otimes_{R} A$.
This follows from proposition 3.5 of \cite{LeStumQuiros18} applied both to $\sigma$ and $\sigma^p$ since $F^*$ is a ring homomorphism.
\end{proof}

%%%%%%%%%%%%%%%%
\begin{lem} \label{compro}
Assume that $R$ is $q$-divisible of $q$-characteristic $p$. If $F^*(A') \subset A^{\partial_{\sigma}=0}$, then we have
\begin{enumerate}
\item $A \otimes_{A^{\partial_{\sigma}=0}} A \simeq \mathrm P^{(\infty)}_{(p-1)_{\sigma}} (= A[\xi]/\xi^{(p)})$ and
\item
$A \otimes_{A^{\partial_{\sigma}=0}} A$ is a direct factor in $A \otimes_{A'} A$.
\end{enumerate}

\end{lem}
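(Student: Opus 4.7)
The plan for (1) is to identify both $A \otimes_{A^{\partial_{\sigma}=0}} A$ and $\mathrm P^{(\infty)}_{(p-1)_{\sigma}}$ with the image of the natural map $\mathrm P_{A/R} \to \widehat{\mathrm P}^{(0)}_{A/R,\sigma}$. The first epi-mono factorization in Proposition~\ref{epimono} exhibits this image as $A \otimes_{A^{\partial_{\sigma}=0}} A$. Under the standing hypothesis that $R$ is $q$-divisible of $q$-characteristic $p$, the second factorization of the same proposition produces a monomorphism $\mathrm P^{(\infty)}_{(p-1)_{\sigma}} \hookrightarrow \widehat{\mathrm P}^{(0)}_{\sigma}$, and the composition $\mathrm P \to \widehat{\mathrm P}^{(\infty)}_{\sigma} \twoheadrightarrow \mathrm P^{(\infty)}_{(p-1)_{\sigma}} \hookrightarrow \widehat{\mathrm P}^{(0)}_{\sigma}$ agrees with the map whose image we are computing, while its first two arrows compose to the tautological surjection $\mathrm P \twoheadrightarrow \mathrm P / I^{(p)}_{\sigma} = \mathrm P^{(\infty)}_{(p-1)_{\sigma}}$. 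Hence the image of $\mathrm P$ in $\widehat{\mathrm P}^{(0)}_{\sigma}$ is also $\mathrm P^{(\infty)}_{(p-1)_{\sigma}}$, forcing the two to coincide; the further identification with $A[\xi]/\xi^{(p)}$ is built into the definition of a twisted coordinate.

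For (2), the hypothesis $F^*(A') \subset A^{\partial_{\sigma}=0}$ factors $F^*\colon A' \to A$ through the horizontal sections, giving a canonical surjective map of $A$-algebras
\[
\pi \colon A \otimes_{A'} A \twoheadrightarrow A \otimes_{A^{\partial_{\sigma}=0}} A.
\]
The plan is to construct an explicit section of $\pi$. Consider the $A$-algebra map $A[\xi] \to A \otimes_{A'} A$ sending $\xi$ to $1 \otimes x - x \otimes 1$. Since $q\mathrm{-char}(R) = p$, one has $\xi^{(p)} = (x+\xi)^p - x^p = 1 \otimes x^p - x^p \otimes 1$, and this element vanishes in $A \otimes_{A'} A$ because $x^p = F^*(1 \otimes x')$ lies in $F^*(A')$. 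The map therefore descends to $s \colon A[\xi]/\xi^{(p)} \to A \otimes_{A'} A$, which by (1) becomes a map $A \otimes_{A^{\partial_{\sigma}=0}} A \to A \otimes_{A'} A$. Both the isomorphism of (1) and the map $s$ send the class of $\xi$ to $1 \otimes x - x \otimes 1$, so $\pi \circ s$ acts as the identity on generators and hence on the whole $A$-algebra, exhibiting $A \otimes_{A^{\partial_{\sigma}=0}} A$ as a direct factor of $A \otimes_{A'} A$.

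The only subtle step is matching up the identifications used in the two parts: the isomorphism of (1) is obtained abstractly via images inside $\widehat{\mathrm P}^{(0)}_{\sigma}$, whereas the section in (2) is built on the concrete presentation $A[\xi]/\xi^{(p)}$. Both identifications are pinned down by where they send the class of $\xi$, and they agree there, so this compatibility is automatic and is the only point requiring explicit verification.
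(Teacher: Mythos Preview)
Your argument is correct, and for part~(2) it is essentially the paper's argument: both construct the section $A[\xi]/\xi^{(p)} \to A \otimes_{A'} A$ by sending $\xi$ to $1\otimes x - x\otimes 1$ and checking that $\xi^{(p)} = (x+\xi)^p - x^p$ maps to zero because $x^p \in F^*(A')$.

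For part~(1) you actually take a cleaner route than the paper. The paper builds the commutative square~\eqref{factf} passing through $A\otimes_{A'}A$ (so it uses the hypothesis $F^*(A')\subset A^{\partial_\sigma=0}$ already for (1)), and reads off from it that the known injection $A\otimes_{A^{\partial_\sigma=0}}A \hookrightarrow \mathrm P^{(\infty)}_{(p-1)_\sigma}$ is surjective. You instead observe directly that the two epi--mono factorizations of Proposition~\ref{epimono} force both objects to equal the image of $\mathrm P \to \widehat{\mathrm P}^{(0)}_\sigma$: the first factorization gives $A\otimes_{A^{\partial_\sigma=0}}A$ as that image, and since $\mathrm P \to \widehat{\mathrm P}^{(\infty)}_\sigma \twoheadrightarrow \mathrm P^{(\infty)}_{(p-1)_\sigma}$ is still surjective, the second factorization gives $\mathrm P^{(\infty)}_{(p-1)_\sigma}$ as the same image. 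This has the pleasant consequence that (1) holds without any Frobenius hypothesis at all; only (2) requires $F^*(A')\subset A^{\partial_\sigma=0}$. The paper's approach, on the other hand, packages (1) and (2) into a single diagram, which is economical but slightly obscures the fact that (1) is unconditional.
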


\begin{proof}
First of all, the condition $F^*(A') \subset A^{\partial_{\sigma} = 0}$ implies that there exists a natural surjection
\[
A \otimes_{A'} A \twoheadrightarrow A \otimes_{A^{\partial_{\sigma}=0}} A.
\]
On the other hand, proposition \ref{epimono} provides a canonical injection $A \otimes_{A^{\partial_{\sigma}=0}} A \hookrightarrow \mathrm P^{(\infty)}_{(p-1)_{\sigma}}$.
Let us consider now the following commutative diagram
\[
\xymatrix{
A'[\xi] \ar[r] \ar[d]^{F^*} & A' \otimes_{R} A' \ar[d]^{F^*} \ar[r] & A' \ar[d] \\
A[\xi] \ar[r] & A \otimes_{R} A \ar[r] & A \otimes_{A'} A.
}
\]
The upper line sends $\xi$ to $0$ and it follows that the bottom line sends $\xi^{(p)} := F^*(\xi)$ to $0$.
In the end, we obtain the commutative diagram
\begin{equation} \label{factf}
\xymatrix{A \otimes_{A'} A \ar@{->>}[r] & A \otimes_{A^{\partial_{\sigma}=0}} A \ar@{^{(}->}[d] \\
A[\xi]/\xi^{(p)} \ar[r]^{\simeq} \ar[u] & \mathrm P^{(\infty)}_{(p-1)_{\sigma}}}
\end{equation}
from which both assertions follow.
\end{proof}

%%%%%%%%%%%%
\begin{dfn}
We say that $F^*$ is \emph{adapted to $\sigma$} if $F^*$ finite flat or rank $p$ and $F^*(A') \subset A^{\partial_{\sigma}=0}$.
\end{dfn}

\begin{xmp}
If $R$ has $q$-characteristic $p$ and $A$ is a localization of the polynomial ring $R[x]$, then $F^*$ is always adapted.
\end{xmp}

\begin{prop} Assume that $R$ is $q$-divisible of $q$-characteristic $p$.
If $F^*$ is adapted to $\sigma$, we can make the identifications
\[
A \otimes_{A'} A = A \otimes_{A^{\partial_{\sigma}=0}} A = \mathrm P^{(\infty)}_{(p-1)_{\sigma}} = \mathrm P^{(0)}_{(p-1)_{\sigma}} = A[\xi]/\xi^{(p)}.
\]
\end{prop}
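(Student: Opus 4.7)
The plan is to read off the chain of five objects and identify the four equalities one by one, reducing everything to results already proved.

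The three rightmost identifications require no new input beyond the $q$-divisibility hypothesis. Since $q\mathrm{-char}(R) = p$, the $q$-integers $(m)_{q}$ with $1 \leq m \leq p-1$ are nonzero, and since $R$ is $q$-divisible they are invertible. Consequently Lemma~\ref{bijag} gives $A[\xi]/\xi^{(p)} \simeq \mathrm P^{(0)}_{(p-1)_{\sigma}}$, and Proposition~\ref{compinf} gives $\mathrm P^{(\infty)}_{(p-1)_{\sigma}} \simeq \mathrm P^{(0)}_{(p-1)_{\sigma}}$. The identification $A \otimes_{A^{\partial_{\sigma}=0}} A \simeq \mathrm P^{(\infty)}_{(p-1)_{\sigma}}$ is precisely assertion (1) of Lemma~\ref{compro}, which applies because $F^*(A') \subset A^{\partial_{\sigma}=0}$ (the second half of the ``adapted'' hypothesis).

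The only new content is therefore the leftmost identification $A \otimes_{A'} A = A \otimes_{A^{\partial_{\sigma}=0}} A$, and here the ``finite flat of rank $p$'' half of the ``adapted'' hypothesis enters. By assertion (2) of Lemma~\ref{compro}, we already know that $A \otimes_{A^{\partial_{\sigma}=0}} A$ is a direct summand of $A \otimes_{A'} A$, so it suffices to check that both $A$-modules have the same rank. Since $F^*$ makes $A$ into a finite flat (hence locally free) $A'$-module of rank $p$, the $A$-module $A \otimes_{A'} A$ is locally free of rank $p$. On the other hand, the three identifications of the previous paragraph show that $A \otimes_{A^{\partial_{\sigma}=0}} A \simeq A[\xi]/\xi^{(p)}$, which is free of rank $p$ over $A$ on the basis $1, \xi, \xi^{(2)}, \dots, \xi^{(p-1)}$.

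To finish, I would invoke the standard fact that a surjection $M \twoheadrightarrow N$ between locally free modules of the same finite rank is an isomorphism: the splitting provided by Lemma~\ref{compro}(2) makes the kernel a direct summand and hence locally free, but its rank must be $p - p = 0$, so it vanishes. This is the only step where the flatness hypothesis is genuinely used, and it is the step I expect to check most carefully; the rest is bookkeeping, already essentially assembled in diagram~\eqref{factf} from the proof of Lemma~\ref{compro}, where the map $A \otimes_{A'} A \to A[\xi]/\xi^{(p)}$ is in fact constructed explicitly via $\xi \mapsto 1 \otimes x - x \otimes 1$.
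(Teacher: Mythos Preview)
Your proof is correct and follows essentially the same approach as the paper: the three rightmost identifications are dismissed via Lemma~\ref{bijag}, Proposition~\ref{compinf}, and Lemma~\ref{compro}(1), and the leftmost one is obtained by combining the direct-summand statement of Lemma~\ref{compro}(2) with a rank count using that $F^*$ is finite flat of rank $p$. One cosmetic slip: in your last sentence the explicit map in diagram~\eqref{factf} goes from $A[\xi]/\xi^{(p)}$ to $A\otimes_{A'}A$ (via $\xi \mapsto 1\otimes x - x\otimes 1$), not the other way around, but this does not affect the argument.
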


\begin{proof}
Only the first equality needs a proof.
We know from the second part of lemma \ref{compro} that $A \otimes_{A^{\partial_{\sigma}=0}} A$ is a direct factor in $A \otimes_{A'} A$.
But the first part of the lemma tells us that $A \otimes_{A^{\partial_{\sigma}=0}} A$ is free of rank $p$ over $A'$ and our assumption implies that $A \otimes_{A'} A$ is locally free of the same rank $p$ over $A'$.
Therefore, they must be equal.
\end{proof}

\begin{rmk}
By duality, lemma \ref{compro} tells us that, when $F^*(A') \subset A^{\partial_{\sigma}=0}$, we have $\overline {\mathrm D}_{A/R,\sigma} = \mathrm{End}_{A^{\partial_{\sigma}=0}} (A)$ and that this is a direct factor in $\mathrm{End}_{A'}(A)$.
Moreover, the proposition says that when $F^*$ is adapted to $\sigma$, then all three rings are equal.
As a consequence, we will actually have an equality $F^*(A') = A^{\partial_{\sigma}=0}$.
\end{rmk}

We denote by $\widehat {\mathrm Z}_{A/R,\sigma}$, $\widehat{\mathrm ZA}_{A/R,\sigma}$ and $\widehat{\mathrm D}_{A/R,\sigma}$ the completions with respect to $\partial_{\sigma}^p$ (or $\partial_{\sigma}$ for the last one: it gives the same thing).
We may now state our \emph{Azumaya splitting} result:

%%%%%%%%%%%%%%%%%%%%%%%%%%%%%%%%%%
\begin{thm} Assume that $R$ is $q$-divisible of positive $q$-characteristic $p$.
If $F^*(A') \subset A^{\partial_{\sigma} = 0}$, then $\Phi_{A/R}$ provides an $A$-linear ring homomorphism
\begin{equation} \label{azsplit1}
\mathrm D_{A/R,\sigma} \to \mathrm{End}_{\mathrm Z_{A/R,\sigma}}(\mathrm ZA_{A/R,\sigma}).
\end{equation}
If moreover,  $F^*$ is finite flat of rank $p$, we obtain an isomorphism
\begin{equation} \label{azsplit}
\widehat {\mathrm D}_{A/R,\sigma} \simeq \mathrm{End}_{\widehat {\mathrm Z}_{A/R,\sigma}}(\widehat{\mathrm ZA}_{A/R,\sigma}).
\end{equation}
\end{thm}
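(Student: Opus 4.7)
The approach mimics the classical Ogus--Vologodsky recipe sketched in the introduction: build an isomorphism at the level of twisted principal parts of level zero via the divided $p$-Frobenius, and then dualize to obtain the splitting. The starting point is Proposition~\ref{frobis}: when $R$ is $q$-divisible of $q$-characteristic $p$, the map $[F^*]$ yields an $A$-algebra isomorphism $(A[\xi]/\xi^{(p)})\langle \omega\rangle \simeq A\langle\xi\rangle$. Under the hypothesis $F^*(A') \subset A^{\partial_\sigma = 0}$, the proposition preceding the theorem lets me identify $A[\xi]/\xi^{(p)}$ with $\mathrm P^{(\infty)}_{(p-1)_\sigma} = A\otimes_{A'} A$. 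Completing in $\omega$, this becomes an $A$-algebra isomorphism
\[
(A\otimes_{A'} A)\,\widehat{\otimes}_{A'}\,A'\langle\langle \omega\rangle\rangle \;\simeq\; \widehat{\mathrm P}^{(0)}_{A/R,\sigma},
\]
and Proposition~\ref{comdlef} ensures it intertwines the comultiplications on both sides.

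Next, I would dualize this iso for the left $A$-module structures. The right-hand side dualizes to $\widehat{\mathrm D}_{A/R,\sigma}$. On the left, Proposition~\ref{dualcom} identifies the dual of $A'\langle\langle \omega\rangle\rangle$ (as comultiplicative $A'$-coalgebra) with the polynomial ring $A'[\theta]$; the twisted $p$-curvature map of Proposition~\ref{duacent} identifies the latter, after extending scalars, with $\widehat{\mathrm{ZA}}_{A/R,\sigma}$ via $\theta\mapsto \partial_\sigma^p$. The remaining factor $A\otimes_{A'} A$ dualizes to $\mathrm{End}_{A'}(A) = \mathrm{End}_{A^{\partial_\sigma=0}}(A)$ thanks to the adapted assumption ($F^*(A')=A^{\partial_\sigma=0}$, and $A$ finite flat of rank $p$ over $A'$). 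The comultiplication compatibility from Step~1 then makes the resulting $A$-linear dual map a ring homomorphism
\[
\widehat{\mathrm D}_{A/R,\sigma} \;\longrightarrow\; \mathrm{End}_{\widehat{\mathrm Z}_{A/R,\sigma}}(\widehat{\mathrm{ZA}}_{A/R,\sigma}),
\]
whose underlying $A$-linear map is governed by $\Phi$ and recovers~\eqref{azsplit1} in the uncompleted setting.

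For the isomorphism~\eqref{azsplit} itself, both sides are now locally free $\widehat{\mathrm Z}_{A/R,\sigma}$-modules of rank $p^2$: $\widehat{\mathrm{ZA}}$ is free of rank $p$ over $\widehat{\mathrm Z}$ (since $A$ is free of rank $p$ over $A^{\partial_\sigma=0}$ by adaptedness), while $\widehat{\mathrm D}$ is free of rank $p$ over $\widehat{\mathrm{ZA}}$ with basis $1,\partial_\sigma,\dots,\partial_\sigma^{p-1}$. The constructed map is an isomorphism simply because the Step~1 iso is, and the adapted hypothesis legitimizes all the finite-rank duality identifications used to repackage it.

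\textbf{Main obstacle.} The most error-prone step is the bookkeeping of the various $A$- and $A'$-module structures (canonical versus Taylor action on $\widehat{\mathrm P}^{(0)}$, the distinction between left and right factors in $A\otimes_{A'} A$, and the interplay between $A$-linearity and $\widehat{\mathrm Z}$-linearity). In particular, one must check that the dualized map lands in $\mathrm{End}_{\widehat{\mathrm Z}}(\widehat{\mathrm{ZA}})$ and not merely in $\mathrm{End}(\widehat{\mathrm{ZA}})$. The adapted hypothesis is exactly what keeps Propositions~\ref{epimono}, \ref{duacent}, \ref{frobis} and the finite-rank counts in sync.
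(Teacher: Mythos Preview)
Your approach is essentially the paper's: build the map on principal parts via Proposition~\ref{frobis}, check comultiplication compatibility via Proposition~\ref{comdlef}, and dualize. One small correction: the identification $A[\xi]/\xi^{(p)} = A\otimes_{A'} A$ that you invoke for the first assertion requires the \emph{full} adapted hypothesis (finite flat of rank $p$), not merely $F^*(A')\subset A^{\partial_\sigma=0}$; under the weaker hypothesis Lemma~\ref{compro} only gives a surjection $A\otimes_{A'}A \twoheadrightarrow A\otimes_{A^{\partial_\sigma=0}}A \simeq A[\xi]/\xi^{(p)}$, which is still enough to produce the \emph{morphism} $(A\otimes_{A'}A)\otimes_{A'}A\langle\omega\rangle \to A\langle\xi\rangle$ and hence, by duality, the ring map~\eqref{azsplit1}. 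The isomorphism~\eqref{azsplit} then follows exactly as you and the paper both say, either by noting the principal-parts map becomes an isomorphism under the rank-$p$ hypothesis, or by your rank-$p^2$ count.
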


Recall that the conjunction of the hypothesis exactly means that $F^*$ is adapted to $\sigma$.

\begin{proof}
Using lemma \ref{compro}, we deduce from proposition \ref{frobis} that there exists a canonical morphism of $A$-algebras
\[
(A \otimes_{A'} A) \otimes_{A'} A\langle \omega \rangle \to A \langle \xi \rangle.
\]
which is an isomorphism when $F^*$ is finite flat of rank $p$.
Moreover, it follows from proposition \ref{comdlef} that this morphism is compatible with the comultiplication maps.
Therefore, it produces by duality (and base change) an $A$-linear homomorphism of rings:
\[
\widehat {\mathrm D}_{A/R,\sigma} \to \mathrm{End}_{A'}(A) \otimes_{A'} \widehat{\mathrm ZA}_{A/R,\sigma} \to \mathrm{End}_{\widehat {\mathrm Z}_{A/R,\sigma}}(\widehat{\mathrm ZA}_{A/R,\sigma}).
\]
Actually, since the twisted divided $p$-Frobenius is continuous, this map is defined before completion.
Finally, when $F$ is finite flat, the last map is also an isomorphism and we are done.
\end{proof}

\begin{rmk}
\begin{enumerate}
\item
The first assertion of the theorem means that $\Phi_{A/R}$ turns $\mathrm ZA_{A/R,\sigma}$ into a $\mathrm D_{A/R,\sigma}$-module via
\[
\partial_{\sigma}^k \cdot z\partial_{\sigma}^{pi} = \Phi(\partial_{\sigma}^k \circ z)\partial_{\sigma}^{pi}.
\]
\item
As a consequence of the theorem, we see that when $F^*(A') \subset A^{\partial_{\sigma} = 0}$, the map $\Phi_{A/R}$ induces an endomorphism of the $A$-algebra $\mathrm Z_{A/R,\sigma}$, and that it gives rise to an automorphism of $\widehat {\mathrm Z}_{A/R,\sigma}$ if moreover $F$ is finite flat of rank $p$.
\item When $F$ is adapted to $\sigma$, we actually have an isomorphism (before completion)
\[
\mathrm Z_{A/R,\sigma}\ {}_{{}_{\Phi}\nwarrow}\!\! \otimes_{\mathrm Z_{A/R,\sigma}}\mathrm D_{A/R,\sigma} \simeq \mathrm{End}_{\mathrm Z_{A/R,\sigma}}(\mathrm ZA_{A/R,\sigma}).
\]
\end{enumerate}
\end{rmk}

\begin{xmp}
Since $\partial_{\sigma} \circ z = \partial_{\sigma}(z) + \sigma(z)\partial_{\sigma}$ for all $z \in A$, we have for all $n \in \mathbb N$, $\partial_{\sigma} \circ x^n = (n)_{q} x^{n-q}+ q^nx^n\partial_{\sigma}$.
It follows that
\[
\partial_{\sigma} \cdot 1 = \Phi(\partial_{\sigma}) = x^{p-1}\partial_{\sigma}^p,
\]
and for $n \geq 1$,
\[
\partial_{\sigma} \cdot x^n = \Phi(\partial_{\sigma}\circ x^k) = (n)_{q} x^{n-1} + q^nx^nx^{p-1}\partial_{\sigma}^p = \left((n)_{q} + q^nx^{p}\partial_{\sigma}^p\right)x^{n-1}.
\]
In other words, the matrix of $\partial_{\sigma}$ will be
\[
\left[\begin{array}{ccccc}0 & qx^{p}\partial_{\sigma}^p + 1 & 0 & \cdots & 0 \\\vdots & 0 & \ddots & \ddots & \vdots \\\vdots & \vdots & \ddots & \ddots & 0 \\0 & 0 & \cdots & 0 & q^{p-1}x^{p}\partial_{\sigma}^p + (p-1)_{q}   \\ \partial_{\sigma}^p & 0 & \cdots & \cdots & 0\end{array}\right].
\]
Note that this is slightly different from the formulas of proposition 4.1 of \cite{GrosLeStum13} because we use here Ogus-Vologodsky divided Frobenius (the coefficients $B$ and not the coefficients $C$).
\end{xmp}

%%%%%%%%%%%%%%%%%%
\begin{cor} Assume that $R$ is $q$-divisible of positive $q$-characteristic $p$ and that $F$ is adapted to $\sigma$.
Then, $\Phi_{A/R}$ induces an equivalence between $\widehat {\mathrm D}_{A/R,\sigma}$-modules and $\widehat {\mathrm Z}_{A/R,\sigma}$-modules.
\end{cor}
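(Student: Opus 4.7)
The plan is to deduce the stated equivalence of module categories from the Azumaya splitting established in the preceding theorem, combined with a classical Morita argument. First, I would invoke the isomorphism
\[
\widehat {\mathrm D}_{A/R,\sigma} \simeq \mathrm{End}_{\widehat {\mathrm Z}_{A/R,\sigma}}(\widehat{\mathrm ZA}_{A/R,\sigma})
\]
furnished by the theorem. Granted such an identification, the problem reduces to producing a Morita equivalence between $\widehat {\mathrm Z}_{A/R,\sigma}$-modules and modules over the endomorphism ring of $\widehat{\mathrm ZA}_{A/R,\sigma}$; this holds as soon as $\widehat{\mathrm ZA}_{A/R,\sigma}$ is a finite projective generator over $\widehat {\mathrm Z}_{A/R,\sigma}$.

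The second step is to check that $\widehat{\mathrm ZA}_{A/R,\sigma}$ is finite free of rank $p$ over $\widehat {\mathrm Z}_{A/R,\sigma}$. From proposition \ref{duacent}, one has $\mathrm ZA_{A/R,\sigma} \simeq A[\theta]$ and $\mathrm Z_{A/R,\sigma} \simeq A^{\partial_{\sigma}=0}[\theta]$, so that
\[
\mathrm ZA_{A/R,\sigma} \simeq A \otimes_{A^{\partial_{\sigma}=0}} \mathrm Z_{A/R,\sigma}.
\]
The assumption that $F^*$ is adapted to $\sigma$ gives both the equality $F^*(A') = A^{\partial_{\sigma}=0}$ (noted in the remark preceding the corollary) and the finite flatness of rank $p$ of $F^*$; one concludes that $A$ is finite projective of rank $p$ over $A^{\partial_{\sigma}=0}$, so $\mathrm ZA_{A/R,\sigma}$ is finite free of rank $p$ over $\mathrm Z_{A/R,\sigma}$. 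I would then pass to the $\partial_{\sigma}^p$-adic completions: since the filtration on $\mathrm{ZA}_{A/R,\sigma}$ is induced from that on $\mathrm Z_{A/R,\sigma}$ via a free base change, one obtains $\widehat{\mathrm ZA}_{A/R,\sigma} \simeq A \otimes_{A^{\partial_{\sigma}=0}} \widehat{\mathrm Z}_{A/R,\sigma}$, which remains finite free of rank $p$ over $\widehat{\mathrm Z}_{A/R,\sigma}$.

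Finally, standard Morita theory applied to the finite free generator $\widehat{\mathrm ZA}_{A/R,\sigma}$ of positive rank over the commutative ring $\widehat{\mathrm Z}_{A/R,\sigma}$ yields mutually inverse equivalences
\[
N \longmapsto \widehat{\mathrm ZA}_{A/R,\sigma} \otimes_{\widehat{\mathrm Z}_{A/R,\sigma}} N, \qquad M \longmapsto \mathrm{Hom}_{\widehat{\mathrm D}_{A/R,\sigma}}(\widehat{\mathrm ZA}_{A/R,\sigma}, M)
\]
between $\widehat{\mathrm Z}_{A/R,\sigma}$-modules and $\widehat{\mathrm D}_{A/R,\sigma}$-modules, where the $\widehat{\mathrm D}$-action on $\widehat{\mathrm ZA}$ is the one supplied by $\Phi_{A/R}$ as recorded in the remark preceding the corollary. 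The main technical obstacle is to justify cleanly that the completion operation respects the tensor decomposition $\mathrm{ZA} \simeq A \otimes_{A^{\partial_{\sigma}=0}} \mathrm Z$ and that the Azumaya isomorphism descends to a genuine identification at the level of the full endomorphism ring after completion; both points are essentially formal consequences of the preceding theorem and of the explicit filtration on $\mathrm D_{A/R,\sigma}$, but they are the only non-routine verifications.
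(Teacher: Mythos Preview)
Your proposal is correct and follows exactly the paper's own approach: the paper's proof is the single line ``This is Morita equivalence,'' and you have simply unpacked the details (Azumaya splitting from the preceding theorem plus the fact that $\widehat{\mathrm ZA}_{A/R,\sigma}$ is a faithfully projective $\widehat{\mathrm Z}_{A/R,\sigma}$-module). One small slip: from $A$ finite \emph{projective} of rank $p$ over $A^{\partial_\sigma=0}$ you conclude $\mathrm{ZA}$ is finite \emph{free} over $\mathrm Z$, which is not automatic; but ``finite projective of positive rank'' is all Morita needs, so the argument goes through unchanged.
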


\begin{proof}
This is Morita equivalence. $ \Box$\end{proof}

In order to state the \emph{twisted Simpson correspondence}, we need to recall some vocabulary.
An endomorphism $u_{M}$ of an abelian group $M$ is said to be \emph{quasi-nilpotent} if
\[
\forall s \in M, \exists N \in \mathbb N, u_{M}^N(s) =0.
\]
Also, a \emph{$\sigma$-derivation} on an $A$-module $M$ is an $R$-linear map $\partial_{\sigma,M} : M \to M$ that satisfies the \emph{twisted Leibniz rule}
\[
\forall z \in A, \forall s \in M, \partial_{\sigma,M}(zs) = \partial_{\sigma_{A}}(z)s + \sigma(z) \partial_{\sigma,M}(s).
\]

We can now reformulate the previous corollary in more down-to-earth terms:

%%%%%%%%%%%%%%%%%%%%%%
\begin{cor} [Twisted Simpson correspondence] Assume that $R$ is $q$-divisible of positive $q$-characteristic $p$ and $F$ is adapted to $\sigma$.
Then, the category of $A$-modules $M$ endowed with a quasi-nilpotent $\sigma$-derivation $\partial_{\sigma,M}$ is equivalent to the category of $A'$-modules $H$ endowed with a quasi-nilpotent $A$-linear endomorphism $u_{H}$. 
\end{cor}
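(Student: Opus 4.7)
The plan is to deduce this from the preceding Morita equivalence corollary by translating each side into the ring-theoretic language already set up. Concretely, I want to identify the category of $A$-modules $M$ with a quasi-nilpotent $\sigma$-derivation with the category of $\widehat{\mathrm D}_{A/R,\sigma}$-modules, and the category of $A'$-modules $H$ with a quasi-nilpotent $A'$-linear endomorphism with the category of $\widehat{\mathrm Z}_{A/R,\sigma}$-modules. Once both translations are in place, the previous corollary finishes the job.

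For the left-hand side, giving an $A$-module $M$ with a $\sigma$-derivation $\partial_{\sigma,M}$ is the same as giving a $\mathrm D_{A/R,\sigma,\partial_{\sigma}}$-module, because of the Ore presentation of the twisted Weyl algebra with relation $\partial_{\sigma}\circ z = \sigma(z)\partial_{\sigma}+\partial_{\sigma}(z)$; here I use the identification $\mathrm D_{A/R,\sigma,\partial_{\sigma}} \simeq \mathrm D_{A/R,\sigma}$ established earlier. Quasi-nilpotence of $\partial_{\sigma,M}$ means exactly that every element of $M$ is killed by a sufficiently large power of $\partial_{\sigma}$, so the $\mathrm D_{A/R,\sigma}$-action extends (uniquely and continuously) to an action of the completion $\widehat{\mathrm D}_{A/R,\sigma}$. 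Conversely, any $\widehat{\mathrm D}_{A/R,\sigma}$-module is automatically quasi-nilpotent. This gives the first translation.

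For the right-hand side, I invoke proposition \ref{duacent}: the twisted $p$-curvature map induces an isomorphism $A^{\partial_{\sigma}=0}[\theta]\simeq \mathrm Z_{A/R,\sigma}$ sending $\theta\mapsto\partial_{\sigma}^p$. Since $F^*$ is adapted to $\sigma$, the remark after the adapted-Frobenius proposition tells us that $F^*$ induces an isomorphism $A'\simeq A^{\partial_{\sigma}=0}$, so $\mathrm Z_{A/R,\sigma}\simeq A'[\theta]$ via $F^*$, and $\widehat{\mathrm Z}_{A/R,\sigma}\simeq A'[[\theta]]$ after completing with respect to $\partial_{\sigma}^p$. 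Thus a $\widehat{\mathrm Z}_{A/R,\sigma}$-module is nothing but an $A'$-module $H$ equipped with an $A'$-linear endomorphism $u_H$ (the action of $\theta$) which is quasi-nilpotent, the quasi-nilpotence being forced (and equivalently allowing) by the passage to the $\theta$-adic completion. Note that an $A'$-linear endomorphism is automatically $A$-linear via $F^*$, so this matches the statement of the corollary.

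With both categories rewritten, the result is immediate from the preceding corollary, which is itself Morita equivalence applied to the Azumaya splitting $\widehat{\mathrm D}_{A/R,\sigma}\simeq\mathrm{End}_{\widehat{\mathrm Z}_{A/R,\sigma}}(\widehat{\mathrm{ZA}}_{A/R,\sigma})$. The only subtle point I expect to have to argue carefully is the compatibility of the two completions: one must check that quasi-nilpotence on the module side exactly corresponds to continuity for the $\partial_{\sigma}$-adic (resp. $\partial_{\sigma}^p$-adic) topology, and that the Morita functor matches these two notions of quasi-nilpotence on the nose. This is a standard verification but is the only place where the ``quasi-nilpotent'' hypothesis really enters, and it is what rigidifies the correspondence enough to make it an honest equivalence of abelian categories rather than just an equivalence of completed module categories.
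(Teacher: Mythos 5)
Your proposal is correct and follows exactly the paper's route: the paper treats this corollary as a direct reformulation of the preceding Morita-equivalence corollary, identifying $A$-modules with a quasi-nilpotent $\sigma$-derivation with $\widehat{\mathrm D}_{A/R,\sigma}$-modules and $A'$-modules with a quasi-nilpotent endomorphism with $\widehat{\mathrm Z}_{A/R,\sigma}$-modules via $\mathrm Z_{A/R,\sigma}\simeq A^{\partial_{\sigma}=0}[\theta]$ and $F^*(A')=A^{\partial_{\sigma}=0}$, which is precisely your translation. The only caveat is your side remark that \emph{every} $\widehat{\mathrm D}_{A/R,\sigma}$-module is automatically quasi-nilpotent, which taken literally is false (e.g.\ the ring acting on itself); but the paper adopts the same identification implicitly, and you yourself flag the completion/quasi-nilpotence compatibility as the point needing care, so this matches the paper's intended argument.
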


\begin{rmk}
\begin{enumerate}
\item
The equivalence is explicit and given by
\[
M \mapsto H := M^{\Phi = 1} \quad \mathrm{and} \quad H \mapsto M := A \otimes_{A'} H.
\]
More precisely,
\[
M^{\Phi = 1} := \{s \in M, \forall k \in \mathbb N, \Phi(\partial_{\sigma}^k)(s) = \partial_{\sigma}^k(s) \}
\]
(which is not easy to compute) will be endowed with the action of $\partial_{\sigma}^p$ and $A \otimes_{A'} H$ will be endowed with the unique $\sigma$-derivation such that
\[
\partial_{\sigma}(1 \otimes s) = x^{p-1} \otimes \theta(s).
\]
\item
Twisted Simpson correspondence holds for example in the following situations:
\begin{enumerate}
\item
$R$ a ring of prime characteristic $p$ and $x$ is an \'etale coordinate on $A$ (Ogus-Vologodsky).
\item
$R$ contains a field $K$, $q \in K$ is a primitive $p$th root of unity and $A = R[x]$ or $R[x, x^{-1}]$.
\item
$R$ is $p^N$-torsion with $p$ prime, the $p$th power map of $R/p$ lifts to $R$, $q$ is a non trivial $p$th root of unity and $x$ is an \'etale coordinate on $A$.
\end{enumerate}
\end{enumerate}
\end{rmk}

%%%%%%%%%%%%%%%%%%%%%%%%%%%
\addcontentsline{toc}{section}{References}
\printbibliography

\end{document}